\newcommand{\level}{}

\documentclass[a4paper,reqno]{amsart}
\pdfoutput=1

\title{A categorification of acyclic principal coefficient cluster algebras}
\author{Matthew Pressland}
\address{Matthew Pressland\\School of Mathematics \& Statistics\\University of Glasgow\\Glasgow G20 8LR\\United Kingdom}
\email{Matthew.Pressland@glasgow.ac.uk}
\subjclass[2010]{13F60, 16G20, 18E30}
\keywords{cluster algebra, cluster-tilting object, principal coefficients, Calabi--Yau algebra, Frobenius category, Jacobian algebra, quiver with potential}

\providecommand{\lang}{UKenglish}
\providecommand{\geomoptions}{hmargin=3cm,vmargin=3.5cm}

\usepackage[\lang]{babel}
\usepackage[utf8]{inputenc}
\usepackage{lmodern}
\usepackage[T1]{fontenc}
\usepackage[\geomoptions]{geometry}
\usepackage{amsmath}
\usepackage{amssymb}
\usepackage{amsfonts}
\usepackage{amsthm}
\usepackage{mathrsfs}
\usepackage{booktabs}
\usepackage{microtype}

\usepackage[usenames,dvipsnames,svgnames]{xcolor}
\usepackage[pdflang=en-UK, colorlinks, urlcolor=Navy, linkcolor=Navy, citecolor=Navy]{hyperref}
\usepackage{color}
\usepackage{tikz}
	\usetikzlibrary{calc}
	\usetikzlibrary{arrows,decorations.markings}
\usepackage{tikz-cd}
\usetikzlibrary{arrows}
\pgfarrowsdeclarecombine{twohead}{twohead}
{angle 90}{angle 90}{angle 90}{angle 90}
\usepackage{microtype}

\usepackage{todonotes}

\usepackage{enumitem}
\setenumerate{label=(\alph*)}

\usepackage{subcaption}




\newcommand{\der}[1]{\partial_{#1}}
	
	\newcommand{\rightder}[1]{\partial^r_{#1}}

\newcommand{\Jacrad}[1]{J(#1)}
\newcommand{\Kdual}{\mathrm{D}}
\newcommand{\op}{\mathrm{op}}

\newcommand{\rad}[1][]{\operatorname{rad}^{#1}}
\newenvironment{radfilt}
  {\renewcommand\thickspace{\kern.05em}\smallmatrix}
  {\endsmallmatrix}

\newcommand{\simp}[1]{S_{#1}}
\newcommand{\Span}[1]{\langle#1\rangle}

\newcommand{\donothing}[1]{}


\newcommand{\dual}[1]{#1^*}
\newcommand{\dsum}{\mathbin{\oplus}}
	\newcommand{\bigdsum}{\bigoplus}
\newcommand{\End}{\operatorname{End}}
	\newcommand{\stabEnd}{\operatorname{\underline{End}}}
\newcommand{\Ext}{\operatorname{Ext}}
\newcommand{\Hom}{\operatorname{Hom}}
	\newcommand{\RHom}{\operatorname{\mathbf{R}Hom}}
	\newcommand{\stabHom}{\operatorname{\underline{Hom}}}
\newcommand{\Kgp}{\mathrm{K}}
\newcommand{\im}{\operatorname{im}}

\newcommand{\invlim}{\varprojlim}
\newcommand{\iso}{\cong}
\newcommand{\isoto}{\stackrel{\sim}{\to}}
\newcommand{\map}[1]{\stackrel{#1}{\longrightarrow}}
\newcommand{\Ob}{\operatorname{Ob}}
\newcommand{\tensor}{\mathbin{\otimes}}





\newcommand{\gldim}{\operatorname{gl.dim}}

\newcommand{\pdim}{\operatorname{p.dim}}

\newcommand{\bdd}{\mathrm{b}}

\newcommand{\EE}{\mathbb{E}}
\newcommand{\KK}{\mathbb{K}}

\newcommand{\QQ}{\mathbb{Q}}

\newcommand{\ZZ}{\mathbb{Z}}


\newcommand{\dimvec}{\operatorname{\underline{dim}}}
\newcommand{\double}[1]{\overline{#1}}
\newcommand{\head}[1]{h#1}
\newcommand{\Head}[2][]{H^{#1}_{#2}}
\newcommand{\idemp}[1]{e_{#1}}

\newcommand{\tail}[1]{t#1}
\newcommand{\Tail}[2][]{T^{#1}_{#2}}


\newcommand{\set}[1]{\{#1\}}


\newcommand{\clustalg}[1]{\mathscr{A}_{#1}}
\newcommand{\cpa}[2]{#1\langle\hspace{-0.1em}\langle #2\rangle\hspace{-0.1em}\rangle}
\newcommand{\Endalg}[2]{\End_{#1}(#2)^{\op}}
	\newcommand{\stabEndalg}[2]{\stabEnd_{#1}(#2)^{\op}}
\newcommand{\env}[1]{{#1}^{\mathrm{e}}}
\newcommand{\jac}[2]{J(#1,#2)}
	\newcommand{\frjac}[3]{J(#1,#2,#3)}

\newcommand{\preproj}{\Pi}
	\newcommand{\dgpreproj}{\mathbf{\Pi}}


	\newcommand{\add}{\operatorname{add}}
\newcommand{\clustcat}[1]{\mathcal{C}_{#1}}
\newcommand{\CM}{\operatorname{CM}}
\newcommand{\dcat}[1][]{\mathcal{D}^{#1}}
	\newcommand{\bdcat}{\mathcal{D}^{\bdd}}

\newcommand{\fd}{\operatorname{fd}}
\newcommand{\frobcat}{\mathcal{E}}
\newcommand{\GP}{\operatorname{GP}}
	\newcommand{\stabGP}{\operatorname{\underline{GP}}}

\newcommand{\Modcat}{\operatorname{Mod}}
	\newcommand{\fgmod}{\operatorname{mod}}
	
\newcommand{\per}{\operatorname{per}}
\newcommand{\projcat}{\operatorname{proj}}

\newcommand{\Sub}{\operatorname{Sub}}
	
\newcommand{\stab}[1]{\underline{#1}}


\newcommand{\Grass}[2]{\mathrm{Gr}_{#1}^{#2}}

\newcommand{\blank}{\mathbin{\char123}}

\renewcommand{\epsilon}{\varepsilon}
\renewcommand{\geq}{\geqslant}
\renewcommand{\leq}{\leqslant}
\newcommand{\type}{\mathsf}


\newcommand{\close}[1]{\overline{#1}}

\theoremstyle{plain}
\newtheorem{thm}{Theorem}[section]
\newtheorem{thm*}{Theorem}
\newtheorem{lem}[thm]{Lemma}
\newtheorem{cor}[thm]{Corollary}
\newtheorem{cor*}{Corollary}
\newtheorem{prop}[thm]{Proposition}
\newtheorem{conj}{Conjecture}

\theoremstyle{definition}
\newtheorem{defn}[thm]{Definition}
\newtheorem{eg}[thm]{Example}

\theoremstyle{remark}
\newtheorem{rem}[thm]{Remark}
\newtheorem*{nb}{Note}

\numberwithin{equation}{section}
\usepackage[backend=bibtex,citestyle=numeric-comp,bibstyle=numeric,maxbibnames=99,giveninits=true,doi=false,isbn=false,url=false,eprint=true]{biblatex}

\renewbibmacro{in:}{%
  \ifentrytype{article}{}{\printtext{\bibstring{in}\intitlepunct}}}

\DeclareFieldFormat[article,inbook,incollection,inproceedings,patent]{title}{#1}
\DeclareFieldFormat[thesis,unpublished]{title}{{\it #1}}

\DeclareFieldFormat[online]{date}{Preprint (#1)}

\DeclareFieldFormat[article]{volume}{\mkbibbold{#1}}
\renewbibmacro*{volume+number+eid}{%
  \printfield{volume}%
  \setunit*{\addnbspace}
  \printfield{number}%
  \setunit{\addcomma\space}%
  \printfield{eid}}
\DeclareFieldFormat[article]{number}{}

\DeclareFieldFormat[book,incollection]{number}{Vol.~#1}
\renewbibmacro*{series+number}{%
  \iffieldundef{series}{}
    {\printfield{series}%
      \iffieldundef{number}{}{\setunit*{\addcomma\addspace}%
       \printfield{number}%
       \newunit}}}

\renewbibmacro*{publisher+location+date}{%
  \printlist{publisher}%
  \setunit*{\addcomma\space}%
  \usebibmacro{date}%
  \newunit}
  
\DeclareFieldFormat{eprint:arxiv}{%
\ifentrytype{online}
  {\ifhyperref
    {\href{http://arxiv.org/abs/#1}{\nolinkurl{arXiv:#1}}}
    {\nolinkurl{arXiv:#1}}
   \iffieldundef{eprintclass}
    {}
    {{\tt \mkbibbrackets{\thefield{eprintclass}}}}}
  {\iffieldundef{eprintclass}
    {\mkbibparens{\ifhyperref
    {\href{http://arxiv.org/abs/#1}{\nolinkurl{arXiv:#1}}}
    {\nolinkurl{arXiv:#1}}}}
    {\mkbibparens{\ifhyperref
    {\href{http://arxiv.org/abs/#1}{\nolinkurl{arXiv:#1}}}
    {\nolinkurl{arXiv:#1}}
    {\tt \mkbibbrackets{\thefield{eprintclass}}}}}}}
\addbibresource{\level/mainbib.bib}

\renewcommand{\env}[1]{{#1}^{\varepsilon}}
\newcommand{\FKcc}[1]{\varphi^{#1}}
\newcommand{\frozen}{dashed}
\renewcommand{\Head}[1]{\mathrm{H}_{#1}}
	\newcommand{\mHead}[1]{\mathrm{H}_{#1}^{\mut}}

\newcommand{\lift}[1]{\widetilde{#1}}
\newcommand{\mut}{\circ}
\newcommand{\Palcc}[1]{\underline{\varphi}^{#1}}
\newcommand{\princlust}[1]{\clustalg{#1}^\bullet}
\newcommand{\pprinclust}[1]{\widetilde{\mathscr{A}}_{#1}}
\newcommand{\rel}[1]{\rho_{#1}}
\newcommand{\res}[1]{\mathbf{P}(#1)}
\newcommand{\rhom}[1]{\Omega_{#1}}
\renewcommand{\Tail}[1]{\mathrm{T}_{#1}}
	\newcommand{\mTail}[1]{\mathrm{T}_{#1}^{\mut}}
\newcommand{\vin}[1]{\mathrm{H}_v}
\newcommand{\vout}[1]{\mathrm{T}_v}
\usepackage[T1]{fontenc}

\date{15th February 2023}

\begin{document}

\begin{abstract}
In earlier work, the author introduced a method for constructing a Frobenius categorification of a cluster algebra with frozen variables by starting from the data of an internally Calabi--Yau algebra, which becomes the endomorphism algebra of a cluster-tilting object in the resulting category. In this paper, we construct appropriate internally Calabi--Yau algebras for cluster algebras with polarised principal coefficients (which differ from those with principal coefficients by the addition of more frozen variables), and obtain Frobenius categorifications in the acyclic case. Via partial stabilisation, we then define extriangulated categories, in the sense of Nakaoka and Palu, categorifying acyclic principal coefficient cluster algebras, for which Frobenius categorifications do not exist in general. Many of the intermediate results used to obtain these categorifications remain valid without the acyclicity assumption, as we will indicate, and are interesting in their own right. Most notably, we provide a Frobenius version of Van den Bergh's result that the Ginzburg dg-algebra of a quiver with potential is bimodule $3$-Calabi--Yau.
\end{abstract}
\maketitle

\section{Introduction}

Cluster algebras, introduced by Fomin--Zelevinsky \cite{FZ-CA1}, are combinatorially defined algebras with applications to many areas of mathematics, and have been the subject of intense study; see Keller \cite{kellercluster} for a survey of connections between cluster algebras and the representation theory of associative algebras, and the references therein for applications to other fields.

A cluster algebra is typically defined using a generating set of cluster variables, which is constructed iteratively  from the data of an initial seed via a process of mutation. Here we consider cluster algebras of geometric type coming from quivers (equivalently, skew-symmetric matrices), so that a seed is given by the data of a quiver together with a collection of rational functions (which are some of the aforementioned cluster variables) in bijection with its vertices. Mutating at one of the quiver vertices modifies the quiver according to Fomin--Zelevinsky's mutation rule (see for example \cite[\S3.2]{kellercluster}), and exchanges the cluster variable attached to this vertex for a new one. A subset of the quiver vertices may be frozen (giving the quiver the structure of an ice quiver, see Definition~\ref{d:frjacalg}), indicating that mutation at these vertices is not permitted, and thus the corresponding cluster variables, also called frozen, appear in every seed. 

Categorification provides an effective tool for studying this combinatorics via the representation theory of quivers. Cluster algebras without frozen variables have been successfully categorified in large generality, beginning with Buan--Marsh--Reineke--Reiten--Todorov's construction \cite{BMRRT} of cluster categories of acyclic quivers, later generalised by Amiot \cite{Amiot-ClustCat} to allow for the existence of cycles. In these categories, certain maximal rigid objects, called cluster-tilting objects, play the role of seeds---in particular, they can be mutated at their indecomposable summands \cite{IY-mutation}. Cluster algebras categorified in this way have seeds in bijection with cluster-tilting objects (within a fixed mutation class) in the category, and cluster variables in bijection with indecomposable rigid objects (appearing as indecomposable summands of cluster-tilting objects in the fixed mutation class). Cluster categories of acyclic quivers have only a single mutation class of cluster-tilting objects \cite[Thm.~A.1]{BMRT}, and so the caveats in the preceding sentence may be dropped.

These results have led to clean, conceptual proofs of many key statements for those cluster algebras without frozen variables admitting such a categorification, including `cluster determines seed' \cite{BMRT}, linear independence of cluster monomials \cite{cerullilinear}, sign coherence of c-vectors \cite{speyeracyclic}, and so on. However, most cluster algebras appearing in nature, such as the cluster structures on the coordinate rings of partial flag varieties \cite{GLS-PFVs}, their double Bruhat cells \cite{berensteincluster3}, and Grassmannian positroid strata \cite{GL-PosVars}, do have frozen variables, which we would also like to capture in a categorification.

This has been achieved for a number of families of cluster algebras \cite{demoneticequivers1, GLS-PFVs, JKS, NC-2CY, DI} using suitable stably $2$-Calabi--Yau Frobenius categories in place of the $2$-Calabi--Yau triangulated cluster categories appearing in the case of no frozen variables. A Frobenius category is, by definition, an exact category with enough projective objects and enough injective objects, such that these two classes of objects coincide. It is the indecomposable projective-injective objects, which necessarily appear as summands of any cluster-tilting object, that will correspond to the frozen variables. The fact that specialising all frozen variables to $1$ in a cluster algebra produces a new cluster algebra without frozen variables (which we call the principal part) corresponds to the fact that the stable category of a Frobenius category, defined as the quotient by the ideal of morphisms factoring through a projective object (so that these objects are isomorphic to the zero object in the quotient), is a triangulated category \cite[\S I.2]{happeltriangulated}. We require this stable category to be $2$-Calabi--Yau so that cluster-tilting objects may be mutated in the appropriate way.

In this paper, we will primarily consider the cluster algebra $\pprinclust{Q}$ with polarised principal coefficients associated to a quiver $Q$. We define this cluster algebra precisely in Section~\ref{s:pp-coeffs}, but for now note that each of its seeds has $3n$ cluster variables, of which $2n$ are frozen, when $Q$ has $n$ vertices. Specialising half of these frozen variables to $1$, we obtain the principal coefficient cluster algebra $\princlust{Q}$ attached to $Q$ by Fomin--Zelevinsky. They used this cluster algebra to study the combinatorics of other cluster algebras having the same principal part; for example, their expansion formula \cite[Cor.~6.3]{FZ-CA4} for cluster variables in such a cluster algebra involves F-polynomials, defined as specialisations of cluster variables in $\princlust{Q}$ \cite[Eq.~3.3]{FZ-CA4}, and hence also obtained as specialisations of cluster variables in $\pprinclust{Q}$. Many of the results of Gross--Hacking--Keel--Kontsevich \cite{grosscanonical} on canonical bases are proved by first reducing to the case of principal coefficients. Further applications of principal coefficient cluster algebras include Lee--Schiffler's realisation \cite{leecluster} of Jones polynomials of $2$-bridge links as specialisations of their cluster variables. The polarised principal coefficient cluster algebra $\pprinclust{Q}$, in the case that $Q$ is a Dynkin quiver, has itself appeared in recent work of Borges and Pierin \cite{borgescluster}, who define a modified cluster character on the ordinary triangulated cluster category of $Q$, taking values in $\pprinclust{Q}$. (A more general version of this final result is a consequence of our considerations here, as explained in Remark~\ref{r:borges-pierin}.)

Our main result is to construct a Frobenius categorification of the cluster algebra $\pprinclust{Q}$ in the case that $Q$ is acyclic.

\begin{thm*}[Categorification for polarised principal coefficients]
\label{t:main-cat-thm}
Let $Q$ be an acyclic quiver. Then the category $\frobcat_Q$ (Definition~\ref{d:pprin-cat})
\begin{enumerate}
\item\label{t:main-cat-thm-fcc}is a Hom-finite Frobenius cluster category (Definition~\ref{d:frobclustcat}),
\item\label{t:main-cat-thm-cc} has stable category $\stab{\frobcat}_Q$ equivalent to the cluster category $\clustcat{Q}$,
\item\label{t:main-cat-thm-clustvars} carries a Fu--Keller cluster character \cite{FuKel}, inducing a bijection between isomorphism classes of indecomposable rigid objects of $\frobcat_Q$ and cluster variables of $\pprinclust{Q}$, and
\item\label{t:main-cat-thm-mut} the previous bijection induces a further bijection between isomorphism classes of cluster-tilting objects of $\frobcat_Q$ and seeds of the polarised principal coefficient cluster algebra $\pprinclust{Q}$, commuting with mutation, such that the ice quiver of the endomorphism algebra\footnote{Throughout the paper, we interpret the quivers of endomorphism algebras as ice quivers by declaring the frozen vertices to be those corresponding to projective indecomposable summands.} of each cluster-tilting object agrees, up to arrows between frozen vertices, with the ice quiver of the corresponding seed.
\end{enumerate}
\end{thm*}

In addition to providing categorifications for a new family of cluster algebras, the proof of Theorem~\ref{t:main-cat-thm} demonstrates the effectiveness of a new and general methodology for making such constructions, based on prior work of the author \cite{Pressland-iCY}. Earlier work providing Frobenius categorifications of cluster algebras has typically depended on having some insight into the cluster algebra in its totality, rather than just the data of an initial seed, before constructing the categorification. For example, Geiß--Leclerc--Schröer \cite{GLS-PFVs}, Demonet--Luo \cite{demoneticequivers1}, Jensen--King--Su \cite{JKS} and Demonet--Iyama \cite{DI} construct Frobenius categorifications for cluster algebra structures on coordinate rings of partial flag varieties by exploiting the fact that these rings are well-understood geometrically. The Frobenius categorifications of universal coefficient cluster algebras by Nájera Chávez \cite{NC-2CY} are restricted to finite type, again making the global combinatorics of the cluster algebra (such as its exchange graph, cluster complex, and so on) more tractable. By contrast, our approach is more akin to the work of Buan--Marsh--Reineke--Reiten--Todorov \cite{BMRRT} and Amiot \cite{Amiot-ClustCat}, in which categorifications are constructed from the local data defining the cluster algebra, namely the initial seed (enhanced in Amiot's case by choosing the extra data of a potential on the quiver). 

Indeed, our construction begins with the definition of an algebra $A$ as a particular quotient of the (complete) path algebra of a quiver $\lift{Q}$ agreeing with that of the initial seed of $\pprinclust{Q}$ up to the addition of arrows between frozen vertices. The algebra $A$ is finite-dimensional and has a strong homological symmetry property---it is bimodule internally $3$-Calabi--Yau with respect to the frozen vertices \cite[Def.~2.4]{Pressland-iCY}. We then construct the category $\frobcat_Q$ from $A$ by applying earlier results of the author \cite[Thm.~4.1, Thm.~4.10]{Pressland-iCY}. The main step in the proof of Theorem~\ref{t:main-cat-thm}, taking up Sections~\ref{s:pp-coeffs}--\ref{s:acyclic} of the paper, is to define $A$ and show that it satisfies the assumptions of these theorems.

As a consequence of this approach we obtain the following surprising result, which is not specific to acyclic quivers, and which we expect to be of independent interest. For the definitions of ordinary and frozen Jacobian algebras, see Definition~\ref{d:frjacalg}.

\begin{thm*}[Corollary of Theorem~\ref{t:bi3cy}]
\label{t:keller-analogue}
Let $(Q,W)$ be a quiver with potential, and let $\stab{A}=\jac{Q}{W}$ be its Jacobian algebra. Then there is a frozen Jacobian algebra $A=A_{Q,W}$ (Definition~\ref{d:bigQP}), internally bimodule $3$-Calabi--Yau with respect to its frozen idempotent $e$, such that $\stab{A}=A/\Span{e}$.
\end{thm*}

We see this statement as analogous to a result of Van den Bergh \cite[Thm.~A.12]{kellerdeformed}, implying that the Jacobian algebra of a quiver with potential may be realised as the $0$-th homology of a bimodule $3$-Calabi--Yau dg-algebra constructed by Ginzburg \cite{ginzburgcalabiyau}. Indeed, the construction of $A_{Q,W}$ has some features in common with that of the Ginzburg dg-algebra $\Gamma_{Q,W}$ of $(Q,W)$ (see Remark~\ref{r:Ginzburg}), but at present we do not know a precise recipe for constructing $A_{Q,W}$ from $\Gamma_{Q,W}$.

It is necessary to pass from the more familiar system of principal coefficients to some larger collection of frozen variables in order for a result like Theorem~\ref{t:main-cat-thm} to be possible, since a Frobenius categorification of the principal coefficient cluster algebra $\princlust{Q}$ cannot exist; see Proposition~\ref{p:no-prin-cat}. However, by taking a suitable quotient of $\frobcat_Q$, we also define a second category $\frobcat^+_Q$, which is extriangulated in the sense of Nakaoka and Palu \cite{nakaokaextriangulated}. This second category is the categorification of $\princlust{Q}$ referred to in the title. Our construction allows the desired properties of $\frobcat^+_Q$ to be deduced directly from the corresponding properties of $\frobcat_Q$ appearing in Theorem~\ref{t:main-cat-thm}, via \cite[Prop.~3.30]{nakaokaextriangulated}.

\begin{cor*}[Categorification for principal coefficients]
\label{c:extriangulated}
Let $Q$ be an acyclic quiver. Then the category $\frobcat^+_Q$ (Definition~\ref{d:prin-cat}) has the properties that
\begin{enumerate}
\item\label{c:extriangulated-cc} the stable category $\stab{\frobcat}^+_Q$ is equivalent to the cluster category $\clustcat{Q}$,
\item\label{c:extriangulated-clustvars} the cluster character on $\frobcat_Q$ (see Theorem~\ref{t:main-cat-thm}\ref{t:main-cat-thm-clustvars}) induces a bijection between isomorphism classes of indecomposable rigid objects of $\frobcat^+_Q$ and cluster variables of $\princlust{Q}$, and
\item\label{c:extriangulated-mut} this induces a further bijection between  isomorphism classes of cluster-tilting objects of $\frobcat_Q$ and seeds of the principal coefficient cluster algebra $\pprinclust{Q}$, commuting with mutation, such that the ice quiver of the endomorphism algebra of each cluster-tilting object agrees, up to arrows between frozen vertices, with the ice quiver of the corresponding seed.
\end{enumerate}
\end{cor*}


We note that Fu--Keller \cite{FuKel} have a different approach to categorifying $\princlust{Q}$, via a full subcategory $\mathcal{U}_Q$ of the cluster category $\clustcat{Q^\bullet}$ attached to the quiver $Q^\bullet$ of the standard initial seed of $\princlust{Q}$. Since it is extension-closed as a subcategory of the triangulated category $\clustcat{Q^\bullet}$, the category $\mathcal{U}_Q$ is naturally extriangulated \cite[Rem.~2.18]{nakaokaextriangulated}.

\begin{conj}
\label{conj:P-vs-FK}
The category $\frobcat^+_Q$ from Corollary~\ref{c:extriangulated} is equivalent, as an extriangulated category, to Fu--Keller's category $\mathcal{U}_Q$.
\end{conj}

Producing equivalences between `cluster-like' categories is known to be a hard problem, although there are some results of this kind for triangulated categories, due to Keller--Reiten \cite{kelleracyclic} and Amiot--Reiten--Todorov \cite{amiotubiquity}. At present, we are not aware of any general result constructing such equivalences between extriangulated categories. Even if Conjecture~\ref{conj:P-vs-FK} is true, the fact that we describe the categorification via a partial stabilisation of the Frobenius category $\frobcat_Q$ instead of as an extension closed subcategory of $\clustcat{Q^\bullet}$ gives us access to different techniques for studying it, as we demonstrate in Section~\ref{s:index}. Indeed, cluster theory is at present better developed for exact categories than for extriangulated categories such as $\frobcat_Q^+$ and $\mathcal{U}_Q$, and it is useful to be able to exploit this.

The main results of the paper are contained in Sections~\ref{s:pp-coeffs}--\ref{s:extriangulated}. In Section~\ref{s:pp-coeffs} we describe the cluster algebras with polarised principal coefficients that we will categorify, and recall the results of \cite{Pressland-iCY}, which we will use to construct the category $\frobcat_Q$ appearing in Theorem~\ref{t:main-cat-thm}. The algebra $A=A_{Q,W}$ needed as input for this construction is defined in Section~\ref{s:bigQP}, from the data of a quiver with potential $(Q,W)$. In Section~\ref{s:frjacCY} we explain results from \cite{Pressland-iCY} which allow us to establish the bimodule internally $3$-Calabi--Yau property for a frozen Jacobian algebra, and apply these results to $A$, thus proving Theorem~\ref{t:keller-analogue}. We first restrict to acyclic quivers in Section~\ref{s:acyclic}, where we show that $A$ is finite-dimensional, and hence Noetherian, under this assumption---this is why we require $Q$ to be acyclic in Theorem~\ref{t:main-cat-thm}. All of these ingredients are combined in Section~\ref{s:mutation} to give a proof of Theorem~\ref{t:main-cat-thm}. The proof of Corollary~\ref{c:extriangulated} then follows in Section~\ref{s:extriangulated}.

In the remaining sections we investigate some of the features of the category $\frobcat_Q$ appearing in Theorem~\ref{t:main-cat-thm}. This category is defined as the category of Gorenstein projective modules over an Iwanaga--Gorenstein algebra $B_Q$, which we describe explicitly via a quiver with relations in Section~\ref{s:bdy-algs}. In Section~\ref{s:index} we explain how to use the category $\frobcat_Q$ to recover cluster-algebraic information about $\pprinclust{Q}$ and $\princlust{Q}$. We close in Section~\ref{s:egs} with examples, in particular observing that Theorem~\ref{t:main-cat-thm} remains true when $Q$ is a $3$-cycle, provided we replace $\clustcat{Q}$ by Amiot's cluster category $\clustcat{Q,W}$ for a particular potential $W$.

Throughout, algebras are assumed to be associative and unital. All modules are left modules, the composition of maps $f\colon X\to Y$ and $g\colon Y\to Z$ is denoted by $gf$, and we use the same convention for compositions of arrows in quivers. The Jacobson radical of a module $X$ is denoted by $\rad{X}$. If $p$ is a path in a quiver, we denote its head by $\head{p}$ and its tail by $\tail{p}$. We fix an algebraically closed field $\KK$, over which all our algebras and categories will be defined unless specified otherwise. For a $\KK$-algebra $A$, we denote the module category of $A$ by $\Modcat{A}$ and the subcategory of finitely generated $A$-modules by $\fgmod{A}$.

\section{Coefficient systems and cluster categories}
\label{s:pp-coeffs}
In this section we introduce our main definitions. We will not give the full definition of a cluster algebra, since this is somewhat lengthy and can be found easily in many other sources; we recommend Keller's survey \cite{kellercluster}. For concreteness, our cluster algebras will be defined over $\QQ$, and we do not assume frozen variables to be invertible; that is, the cluster algebra $\clustalg{}$ generated by an initial seed with cluster $(x_1,\dotsc,x_n,x_{n+1},\dotsc,x_m)$, in which the mutable variables are $x_1,\dotsc,x_n$, is defined to be a subalgebra of $\QQ(x_1,\dotsc,x_n)[x_{n+1},\dotsc,x_m]$. However, since our results are concerned with the set of cluster variables of $\clustalg{}$, their grouping into clusters, and the exchange graph on these clusters, they are insensitive to changing these conventions by replacing $\QQ$ by a field extension or inverting the frozen variables. As such, we de-emphasise these conventions below, and simply refer to `the' cluster algebra generated by a seed.

Let $\clustalg{}$ be a cluster algebra of geometric type without frozen variables, and let $s_0$ be a seed of $\clustalg{}$, with quiver $Q$ and cluster variables $(x_1,\dotsc,x_n)$. By definition, the quiver $Q$ has no loops or $2$-cycles. The principal coefficient cluster algebra $\princlust{Q}$ corresponding to this data is defined by the following initial seed. The mutable cluster variables are again $(x_1,\dotsc,x_n)$, and the frozen variables are $(x_1^+,\dotsc,x_n^+)$, where the indexing reveals a preferred bijection between the mutable and frozen variables. 
The ice quiver $Q^\bullet$ of this seed contains $Q$ as a full subquiver, with mutable vertices, and for each vertex $i\in Q_0$ (corresponding to the variable $x_i$), $Q^\bullet$ has a frozen vertex $i^+$ (corresponding to $x_i^+$) and an arrow $i\to i^+$. While $\clustalg{}$ is isomorphic (as a cluster algebra) to the cluster algebra determined by any quiver mutation equivalent to $Q$, this is not true of $\princlust{Q}$.

Specialising all frozen variables of $\princlust{Q}$ to $1$ to obtain a cluster algebra without frozen variables (which we refer to as taking the \emph{principal part}) recovers $\clustalg{}$, and gives a bijection between the seeds of $\princlust{Q}$ and those of $\clustalg{}$ \cite{cerullilinear}; we write $s^\bullet$ for the seed of $\princlust{Q}$ corresponding to a seed $s$ of $\clustalg{}$. Precisely, if $s^\bullet$ has cluster $(v_1,\dotsc,v_n,x_1^+,\dotsc,x_n^+)$, then $s$ has cluster $(v_1,\dotsc,v_n)|_{x_i^+=1,1\leq i\leq n}$, and the quiver of $s$ is the full subquiver of that of $s^\bullet$ on the mutable vertices. Principal coefficients are important since knowledge of the cluster algebra $\princlust{Q}$ can be used to study any cluster algebra $\clustalg{}'$ with principal part $\clustalg{}$, via the theory of g-vectors and F-polynomials \cite{FZ-CA4}.

By choosing some extra data on $Q$, Amiot \cite{Amiot-ClustCat} constructs a categorification of $\clustalg{}$. The construction uses Jacobian algebras, so we recall some relevant definitions, at the level of generality needed later in the paper.

\begin{defn}
\label{d:frjacalg}
An \emph{ice quiver} $(Q,F)$ consists of a finite quiver $Q$ without loops and a subquiver $F$ of $Q$. Denote by $\cpa{\KK}{Q}$ the completion of the path algebra of $Q$ over our fixed algebraically closed field $\KK$ with respect to the ideal $J_Q$ generated by arrows, treated as a topological algebra with the $J_Q$-adic topology. A \emph{potential} on $Q$ is an element $W$ of the vector space quotient  $\cpa{\KK}{Q}/\close{\{\cpa{\KK}{Q},\cpa{\KK}{Q}\}}$, of $\cpa{\KK}{Q}$ by the closure of the subspace spanned by commutators, such that $W$ lies in the image of the natural map from $J_Q$ to this vector space. (More informally, $W$ is a possibly infinite linear combination of cyclic equivalence classes of cycles of positive length in $Q$.)

A vertex or arrow of $Q$ is called \emph{frozen} if it is a vertex or arrow of $F$, and \emph{mutable} or \emph{unfrozen} otherwise. For brevity, we write $Q_0^\mut=Q_0\setminus F_0$ and $Q_1^\mut=Q_1\setminus F_1$ for the sets of mutable vertices and unfrozen arrows respectively. For $\alpha\in Q_1$ and $\alpha_k\dotsm\alpha_1$ a cycle in $Q$, write
\[\der{\alpha}{\alpha_k\dotsm\alpha_1}=\sum_{\alpha_i=\alpha}\alpha_{i-1}\dotsm\alpha_1\alpha_k\dotsm\alpha_{i+1}.\]
Extending the map $\der{\alpha}$ linearly and continuously, and noting that it vanishes on commutators, allows us to define $\der{\alpha}{W}$. The ideal $\Span{\der{\alpha}{W}:\alpha\in Q_1^\mut}$ of $\cpa{\KK}{Q}$ is called the \emph{Jacobian ideal}, and we define the \emph{frozen Jacobian algebra} associated to $(Q,F,W)$ by
\[\frjac{Q}{F}{W}=\cpa{\KK}{Q}/\close{\Span{\der{\alpha}{W}:\alpha\in Q_1^\mut}}.\]
Write $A=\frjac{Q}{F}{W}$. The above presentation of $A$ suggests a preferred idempotent
\[e=\sum_{v\in F_0}\idemp{v},\]
which we call the \emph{frozen idempotent}. We will call the subalgebra $B=eAe$ the \emph{boundary algebra} of $A$. If $F=\varnothing$, then we refer to the pair $(Q,W)$ as a quiver with potential, and call $\jac{Q}{W}=\frjac{Q}{\varnothing}{W}$ simply the Jacobian algebra of $(Q,W)$. If $\jac{Q}{W}$ is finite-dimensional, we call both $W$ and the pair $(Q,W)$ \emph{Jacobi-finite}.
\end{defn}

\begin{rem}
\label{r:completion}
In much of the paper, we will consider algebras defined by a quiver $Q$ together with a set of relations $R$, in the ordinary path algebra $\KK Q$, having the property that the natural map $\KK Q/\Span{R}\to\cpa{\KK}{Q}/\close{\Span{R}}$ is an isomorphism---for example, this happens when the domain is finite-dimensional. However, we take complete path algebras in the general theory since we want the categories we construct to be Krull--Schmidt \cite[\S4]{krausekrullschmidt} (cf.\ \cite[Rem.~3.3]{JKS}). Further, we note that the Jacobson radical $J_Q$ of $\cpa{\KK}{Q}$ is generated by the arrows (unlike the Jacobson radical of $\KK Q$ when $Q$ has cycles). Since the Jacobian ideal defining $A=\frjac{Q}{F}{W}$ is contained in $J_Q$, it follows that $A$ has Jacobson radical $\Jacrad{A}$ generated by the arrows of $Q$, and so $A/\Jacrad{A}\cong\cpa{\KK}{Q}/J_Q\cong\KK^{Q_0}$ is a semi-simple algebra. We will use this fact in Section~\ref{s:frjacCY}.

The terminology `boundary algebra' is inspired by and compatible with \cite{BKM} (see also \cite{Pressland-Post}), although in the general case $Q$ need not admit any embedding into a manifold so that $F$ is embedded into the boundary.
\end{rem}

Assume that $Q$ is a quiver admitting a Jacobi-finite potential $W$. Then by work of Amiot \cite{Amiot-ClustCat}, there is a $2$-Calabi--Yau triangulated category $\clustcat{Q,W}$ categorifying $\clustalg{Q}$. If $W$ is additionally non-degenerate \cite[Def.~7.2]{derksenquivers1}, then the seeds of $\clustalg{}$ correspond bijectively to additive equivalence classes of cluster-tilting objects of $\clustcat{Q,W}$ related by a finite sequence of mutations to an initial cluster-tilting object $T_0$ with $\Endalg{\clustcat{Q,W}}{T_0}=\jac{Q}{W}$; such cluster-tilting objects are called \emph{reachable}, and we denote the seed corresponding to such an object $T$ by $s_T$. Non-degeneracy is needed here to see, using results of Buan--Iyama--Reiten--Smith \cite{BIRS2}, that mutation of cluster-tilting objects in the mutation class of $T_0$ corresponds to Fomin--Zelevinsky mutation at the level of quivers of endomorphism algebras, and so in particular mutation is well-defined at any indecomposable summand of any reachable cluster-tilting object.

A priori, to categorify the principal coefficient cluster algebra $\princlust{Q}$, we would like to find a Frobenius category $\frobcat$ such that
\begin{enumerate}
\item\label{i:prin-cat-cc} the stable category $\stab{\frobcat}$ is triangle equivalent to $\clustcat{Q,W}$ (so that cluster-tilting objects of $\stab{\frobcat}$ may be identified with those of $\clustcat{Q,W}$, and hence cluster-tilting objects in $\frobcat$ can be mutated at their non-projective indecomposable summands), and
\item\label{i:prin-cat-mut} for any reachable cluster-tilting object $T$ there is an isomorphism $\cpa{\KK}{Q'}/\close{I}\isoto\Endalg{\frobcat}{T}$, for some ideal $I\subseteq J_{Q'}^2$, where $Q'$ is, up to arrows between frozen vertices, the quiver of the seed $s_T^\bullet$ of $\princlust{Q}$. (As always, we define the frozen vertices of $Q'$ to be those whose image under the given isomorphism to $\Endalg{\frobcat}{T}$ is a projection onto an indecomposable projective summand.)
\end{enumerate}
Unfortunately, this is not possible.

\begin{prop}
\label{p:no-prin-cat}
There does not exist a Frobenius category $\frobcat$ satisfying conditions \ref{i:prin-cat-cc} and \ref{i:prin-cat-mut} above.
\end{prop}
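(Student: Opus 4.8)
I would assume such an $\frobcat$ exists and derive a contradiction by performing a single cluster-tilting mutation at a source of $Q$. At the initial seed the hypotheses already pin things down tightly. Being Frobenius, $\frobcat$ has finitely many indecomposable projective-injective objects $P_{1},\dots,P_{n}$, which are summands of every cluster-tilting object; by \ref{prin-cat-cc} they correspond to the frozen variables $y_{1},\dots,y_{n}$, i.e.\ to the frozen vertices $1^{+},\dots,n^{+}$ of $Q^\bullet$. Fix a lift $T_{0}=\bar{T}_{0}\dsum P$ to $\frobcat$ of the initial cluster-tilting object of $\clustcat{Q,W}$, with $P=\bigdsum_{i}P_{i}$ and $\stabEndalg{\frobcat}{\bar{T}_{0}}=\jac{Q}{W}$; this $T_{0}$ is reachable, so by \ref{prin-cat-mut} the Gabriel quiver of $\Lambda_{0}:=\Endalg{\frobcat}{T_{0}}$ is $Q^\bullet$ together with, at most, some additional arrows among the frozen vertices.

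Now fix a source $k$ of $Q$. It is then also a source of $Q^\bullet$, and --- since $k$ is mutable, so that the additional frozen arrows are irrelevant there --- a source of the Gabriel quiver of $\Lambda_{0}$. For each mutable vertex $v\neq k$ one has $\Hom_{\frobcat}(T_{k},\bar{T}_{v})\cong e_{k}\Lambda_{0}e_{v}$, spanned by residues of paths from $v$ to $k$; as $k$ is a source, there are none, so these Hom-spaces vanish. Consequently, in the exchange conflation $0\to T_{k}\to E\to T_{k}^{*}\to 0$ whose first map is the minimal left $\add(T_{0}/T_{k})$-approximation of $T_{k}$ --- one of the two conflations defining the mutation $\mu_{k}(T_{0})=(\bar{T}_{0}/T_{k})\dsum T_{k}^{*}\dsum P$ --- the middle term $E$ involves only the injective objects $P_{i}$; in fact $E$ must be the injective envelope of $T_{k}$, whence $T_{k}^{*}\cong\Omega^{-1}T_{k}$ in $\stab{\frobcat}$. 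In particular $T_{k}^{*}$ is not projective, since it is nonzero in $\stab{\frobcat}\simeq\clustcat{Q,W}$.

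Finally, I would compare with the mutated ice quiver. Mutating $Q^\bullet$ at the source $k$ merely reverses the arrows incident to $k$: no path of length two passes through a source, and no $2$-cycle is created. So $k$ is a \emph{sink} of $\mu_{k}(Q^\bullet)$, and adjoining arrows among the frozen vertices cannot change this, so --- by \ref{prin-cat-mut} applied to the reachable object $\mu_{k}(T_{0})$ --- the vertex $k$ is a sink of the Gabriel quiver of $\Lambda_{1}:=\Endalg{\frobcat}{\mu_{k}(T_{0})}$. But a sink vertex of the Gabriel quiver of such an algebra carries a simple projective module, so the indecomposable projective $\Lambda_{1}$-module corresponding to $T_{k}^{*}$, namely $\Hom_{\frobcat}(\mu_{k}(T_{0}),T_{k}^{*})$, is simple; in particular $\Hom_{\frobcat}(P_{i},T_{k}^{*})=0$ for every $i$. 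Since $\frobcat$ has enough projectives and $T_{k}^{*}\neq 0$, this is absurd.

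The step that I expect to need the most care is pinning down the shape of that exchange conflation --- that the minimal left $\add(T_{0}/T_{k})$-approximation of $T_{k}$ really is the injective envelope $T_{k}\hookrightarrow I(T_{k})$, so that $T_{k}^{*}$ is a cosyzygy --- which rests on the approximation-theoretic description of mutations of cluster-tilting objects in a Frobenius category (Iyama--Yoshino, Geiß--Leclerc--Schröer). The other ingredients --- the identification of $\Hom$-spaces in $\frobcat$ with path spaces for $\Lambda_{0}$, the computation of $\mu_{k}(Q^\bullet)$, and the observation that the ``up to arrows between frozen vertices'' clause is harmless here because $k$ is mutable --- are routine.
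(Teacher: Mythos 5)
Your approach is genuinely different from the paper's — you mutate once and show the mutated complement $T_k^*$ has no projective cover, whereas the paper reads off a contradiction directly from the initial seed — but it has a gap that prevents it from proving the full statement. You begin by choosing a source $k$ of $Q$, but the proposition is stated for an arbitrary $Q$ admitting a Jacobi-finite non-degenerate potential $W$, and such $Q$ need not have a source (e.g.\ $Q$ a $3$-cycle with $W$ its natural potential, which is exactly one of the running examples in the paper). Your argument therefore establishes the proposition only under an acyclicity-type hypothesis that the statement does not carry.

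Beyond this, the mutation step is doing more work than needed, and in fact your own source observation already contains the contradiction. You note that $k$ is a source of the \emph{entire} Gabriel quiver of $\Lambda_0$ (the extra arrows lie between frozen vertices and so do not touch the mutable vertex $k$). But then there are no paths in that Gabriel quiver into $k$ from \emph{any} vertex, frozen or mutable, so $\Hom_\frobcat(T_k,T_v)=0$ for all $v\neq k$ — in particular $\Hom_\frobcat(T_k,P_i)=0$ for every projective-injective $P_i$. This already says $T_k$ admits no non-zero map into any injective, contradicting the Frobenius property; there is no injective envelope $E$ to feed into the exchange conflation in the first place. (Your argument is still formally valid as a proof by contradiction, since from a false premise everything follows, but the exchange-conflation and sink analysis after mutation is a detour around a contradiction already visible before mutating.) The paper's proof is precisely this shorter observation, made slightly differently so that it works at every mutable vertex and for every $Q$: in $Q^\bullet$, and hence in the Gabriel quiver of $\Lambda_0$, there are no paths from any frozen vertex to any mutable vertex, so no non-projective summand of $T_0$ maps non-trivially to a projective-injective, and none has an injective envelope. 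That argument needs only the initial cluster-tilting object, no mutation theory, and no hypothesis on $Q$.
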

\begin{proof}
Assume $\frobcat$ were such a Frobenius category, and let $\hat{T}_0$ be the cluster-tilting object in $\frobcat$ which, when viewed as an object of $\stab{\frobcat}$, is identified with $T_0$ under the equivalence in \ref{i:prin-cat-cc}. By \ref{i:prin-cat-mut}, we have an isomorphism $\Endalg{\frobcat}{\hat{T}_0}\isoto\cpa{\KK}{Q}/\close{I}$ for $I\subseteq J_Q^2$ and $Q$ a quiver differing from $Q^\bullet$ only by the possible addition of arrows between frozen vertices. This latter property means that there is no path in $Q$ from a frozen vertex to a mutable one, and hence there are no non-zero maps in $\frobcat$ from an indecomposable non-projective summand of $\hat{T}_0$ to an injective object in $\frobcat$, contradicting the assumption that $\frobcat$ has enough injective objects.
\end{proof}

For this reason, we will first consider, and categorify, a different cluster algebra $\pprinclust{Q}$ with initial seed obtained from that of $\princlust{Q}$ by adding more frozen variables. Starting from our seed $s_0$ of $\clustalg{}$, with quiver $Q$ and cluster variables $(x_1,\dotsc,x_n)$, we construct an initial seed $\lift{s}_0$ of $\pprinclust{Q}$ as follows. The seed $\lift{s}_0$ has mutable variables $(x_1,\dotsc,x_n)$, and frozen variables $(x_1^+,\dotsc,x_n^+,x_1^-,\dotsc,x_n^-)$. Its ice quiver $\lift{Q}$ will be described fully in Definition~\ref{d:bigQP}; the relevant features for defining the cluster algebra are that $\lift{Q}$ contains $Q$ as a full subquiver, with mutable vertices, and has two frozen vertices $i^+$ (corresponding to $x_i^+$) and $i^-$ (corresponding to $x_i^-)$ for each mutable vertex $i\in Q_0$, with arrows $i\to i^+$ and $i^-\to i$. (In Definition~\ref{d:bigQP} we also describe arrows between the frozen vertices, that will play a role in our categorification, but are not important for defining $\pprinclust{Q}$.) We call $\pprinclust{Q}$ the \emph{polarised principal coefficient cluster algebra} associated to $Q$.

We adopt the word `polarised', referring to the partitioning of the frozen variables into two sets, to differentiate this coefficient system from the double principal coefficients studied by Rupel--Stella--Williams \cite{rupelgeneralized}. Since one encounters the same issues categorifying cluster algebras with double principal coefficients as in the case of ordinary principal coefficients, namely that no categorification may have enough injective objects, our preference here is for the polarised version.

As discussed in the introduction, we will construct a categorification of $\pprinclust{Q}$ using methodology introduced by the author in \cite{Pressland-iCY}. We now recall the key definitions and results needed to explain this construction. Given a $\KK$-algebra $A$, we write $\dcat(A)$ for its derived category and $\env{A}=A\tensor_\KK A^{\op}$ for its enveloping algebra, modules of which are precisely $A$-bimodules. We denote by $\per{A}$ the perfect derived category of $A$, i.e.\ the thick subcategory of $\dcat(A)$ generated by $A$, and write $\rhom{A}=\RHom_{\env{A}}(A,\env{A})$.

\begin{defn}[{\cite[Def.~2.4]{Pressland-iCY}}]
\label{d:bi3cy}
Given an algebra $A$ and idempotent $e\in A$, we say $A$ is \emph{bimodule internally $3$-Calabi--Yau} with respect to $e$ if
\begin{enumerate}
\item $\pdim_{\env{A}}{A}\leq 3$,
\item $A\in\per{\env{A}}$, and
\item there exists a triangle $A\map{\psi}\rhom{A}[3]\map{}C\map{}A[1]$ in $\dcat(\env{A})$, such that
\[\RHom_A(C,M)=0=\RHom_{A^{\op}}(C,N)\]
for any $M\in\dcat(A)$ whose total cohomology is a finite-dimensional $A/AeA$-module, and any $N\in\dcat(A^{\op})$ satisfying the analogous condition.
\end{enumerate}
\end{defn}

\begin{rem}
\label{r:i3cy}
Assume $A$ is bimodule internally $3$-Calabi--Yau with respect to $e$. Then $\gldim{A}\leq 3$, and there is a functorial duality
\[\Kdual\Ext^i_A(M,N)=\Ext^{3-i}_A(N,M)\]
for any finite-dimensional $A/AeA$-module $M$ and any $A$-module $N$ \cite[Cor.~2.9]{Pressland-iCY}. Moreover, $A^{\op}$ has the same properties \cite[Rem.~2.6]{Pressland-iCY}. 
\end{rem}

To construct our Frobenius categories, we will use the following theorem.

\begin{thm}[{\cite[Thm.~4.1, Thm.~4.10]{Pressland-iCY}}]
\label{t:icytofrobcat}
Let $A$ be an algebra, and $e\in A$ an idempotent. If $A$ is Noetherian, $\stab{A}=A/AeA$ is finite-dimensional, and $A$ is bimodule internally $3$-Calabi--Yau with respect to $e$, then
\begin{enumerate}
\item\label{t:icytofrobcat-frob} $B=eAe$ is Iwanaga--Gorenstein (of injective dimension at most $3$), so the category
\[\GP(B)=\set{X\in\fgmod{B}:\text{$\Ext^i_B(X,B)=0$ for all $i>0$}}\]
of Gorenstein projective $B$-modules is a Frobenius category,
\item\label{t:icytofrobcat-ct} $eA\in\GP(B)$ is cluster-tilting,
\item\label{t:icytofrobcat-isos} there are natural isomorphisms $\Endalg{B}{eA}\isoto A$ and $\stabEndalg{B}{eA}\isoto\stab{A}$, and
\item\label{t:icytofrobcat-cy} the stable category $\stabGP(B)$ is $2$-Calabi--Yau.
\end{enumerate}
\end{thm}

In many cases---in particular in our application in the proof of Theorem~\ref{t:main-cat-thm}---the Frobenius category $\GP(B)$ appearing in Theorem~\ref{t:icytofrobcat} is a Frobenius cluster category, as studied in \cite{Pressland-iCY}, and defined as follows.

\begin{defn}[cf.~{\cite[Def.~3.3]{Pressland-iCY}}]
\label{d:frobclustcat}
An Frobenius category $\frobcat$ is called a Frobenius cluster category if it is Krull--Schmidt, stably $2$-Calabi--Yau, and $\gldim{\Endalg{\frobcat}{T}}\leq3$ for any cluster-tilting object $T\in\frobcat$, of which there is at least one.
\end{defn}

\begin{prop}
\label{p:frob-clust-cat}
In the context of Theorem~\ref{t:icytofrobcat}, if $A$ is additionally finite-dimensional, then $\GP(B)$ is a Frobenius cluster category.
\end{prop}
\begin{proof}
The category $\GP(B)$ is idempotent complete for any algebra $B$, Frobenius by Theorem~\ref{t:icytofrobcat}\ref{t:icytofrobcat-frob}, and stably $2$-Calabi--Yau by Theorem~\ref{t:icytofrobcat}\ref{t:icytofrobcat-cy}. Now we use the extra assumption that $A$ is finite-dimensional. First, by \cite[Prop.~3.7]{Pressland-iCY}, the endomorphism algebra of a cluster-tilting object in $\GP(B)$ has global dimension at most $3$ provided it is Noetherian; when $A$ is finite-dimensional, the category $\GP(B)$ is Hom-finite, so this condition certainly holds. Moreover, finite-dimensional algebras are semi-perfect, which together with the earlier observation that $\GP(B)$ is idempotent complete implies that it is a Krull--Schmidt category by \cite[Cor.~4.4]{krausekrullschmidt}. 
\end{proof}

\begin{rem}
As suggested by the proof, the assumption that $A$ is finite-dimensional is likely to be much stronger than needed for the conclusion of Proposition~\ref{p:frob-clust-cat}. For example, it may already be true under the assumptions of Theorem~\ref{t:icytofrobcat} that all cluster-tilting objects in $\GP(B)$ have Noetherian endomorphism algebra---this is true for the cluster-tilting object $eA$ by the theorem, but we do not currently know how to deduce it for other cluster-tilting objects. Moreover, there are many infinite-dimensional semi-perfect algebras, such as complete path algebras of finite quivers with cycles.
\end{rem}

Returning to the problem of categorifying the cluster algebra $\pprinclust{Q}$, our aim now is to construct an algebra $A$ satisfying the conditions of Theorem~\ref{t:icytofrobcat}, defined as a quotient of the complete path algebra of the quiver $\lift{Q}$ used to define $\pprinclust{Q}$.

\section{An ice quiver with potential}
\label{s:bigQP}

Consider again our initial seed $s_0$ for $\clustalg{}$, with quiver $Q$, and choose a potential $W$ on $Q$. In this section, we will construct from $(Q,W)$ an ice quiver with potential $(\lift{Q},\lift{F},\lift{W})$, and thus a frozen Jacobian algebra $\frjac{\lift{Q}}{\lift{F}}{\lift{W}}$. It is this algebra that we intend to use as the input for the construction of a Frobenius category by Theorem~\ref{t:icytofrobcat}; to apply this theorem we will require that $\jac{Q}{W}$ is finite-dimensional, but we do not assume this yet.

\begin{defn}
\label{d:bigQP}
Let $(Q,W)$ be a quiver with potential. We define $\lift{Q}$ to be the quiver with vertex set given by
\[\lift{Q}_0=Q_0\sqcup Q_0^+\sqcup Q_0^-\]
where $Q_0^+=\set{i^+:i\in Q_0}$ is a set of formal symbols in bijection with $Q_0$, and similarly for $Q_0^-=\set{i^-:i\in Q_0}$. The set of arrows is given by
\[\lift{Q}_1=Q_1\sqcup\set{\alpha_i:i\in Q_0}\sqcup\set{\beta_i:i\in Q_0}\sqcup\set{\delta_i:i\in Q_0}\sqcup\set{\delta_a:a\in Q_1},\]
and the head and tail functions $\head{},\tail{}\colon\lift{Q}_1\to\lift{Q}_0$ are extended from those of $Q$ by defining
\begin{align*}
\head{\alpha_i}&=i^+,&\head{\beta_i}&=i,&\head{\delta_i}&=i^-,&\head{\delta_a}&=(\tail{a})^-,\\
\tail{\alpha_i}&=i,&\tail{\beta_i}&=i^-,&\tail{\delta_i}&=i^+,&\tail{\delta_a}&=(\head{a})^+.
\end{align*}
The frozen subquiver $\lift{F}$ is defined by
\[
\lift{F}_0=Q_0^+\sqcup Q_0^-,\qquad
\lift{F}_1=\set{\delta_i:i\in Q_0}\sqcup\set{\delta_a:a\in Q_1}.
\]
Note that the head and tail of any arrow in $\lift{F}_1$ lies in $\lift{F}_0$, so these subsets describe a valid subquiver of $\lift{Q}$, which is even full. The quiver $\lift{F}$ is also bipartite, meaning that every vertex is either a source or a sink, and so it has no paths of length greater than $1$; precisely, the vertices in $Q_0^+$ are sources in $\lift{F}$, and those in $Q_0^-$ are sinks. As a vertex of $\lift{Q}$, each $i^+$ has unique incoming arrow $\alpha_i$ and each $i^-$ has unique outgoing arrow $\beta_i$.

Finally, we define a potential $\lift{W}$ on $\lift{Q}$ by
\[\lift{W}=W+\sum_{i\in Q_0}\beta_i\delta_i\alpha_i-\sum_{a\in Q_1}a\beta_{\tail{a}}\delta_a\alpha_{\head{a}},\]
and let
\[A_{Q,W}=\frjac{\lift{Q}}{\lift{F}}{\lift{W}}\]
be the frozen Jacobian algebra determined by $(\lift{Q},\lift{F},\lift{W})$. We denote the boundary algebra of $A_{Q,W}$ by $B_{Q,W}=eA_{Q,W}e$, where $e=\sum_{i\in Q_0}(e_i^++e_i^-)$ is the frozen idempotent of $A_{Q,W}$.
\end{defn}

\begin{nb}
To aid legibility, if the vertices $i^+$ or $i^-$ appear as subscripts, we will usually move the sign into a superscript, as in the above expression for the frozen idempotent. When $W=0$, we will typically drop it from the notation; for example, we write $A_Q=A_{Q,0}$. The reader is warned that $\lift{0}$ is not the zero potential on $\lift{Q}$.
\end{nb}

Since $\lift{W}$ has a straightforward description in terms of $W$, so do the defining relations of $A_{Q,W}$; these form the set $R$ consisting of
\begin{equation}
\label{eq:relations}
\der{a}{\lift{W}}=\der{a}{W}-\beta_{\tail{a}}\delta_a\alpha_{\head{a}},\quad
\der{\alpha_i}{\lift{W}}=\beta_i\delta_i-\sum_{\substack{\gamma\in Q_1\\\head{\gamma}=i}}\gamma\beta_{\tail{\gamma}}\delta_\gamma,\quad
\der{\beta_i}{\lift{W}}=\delta_i\alpha_i-\sum_{\substack{\gamma\in Q_1\\\tail{\gamma}=i}}\delta_\gamma\alpha_{\head{\gamma}}\gamma,
\end{equation}
for $a\in Q_1$ and $i\in Q_0$. Having such an explicit generating set of relations will prove to be extremely useful later in the paper.

To be able to apply Theorem~\ref{t:icytofrobcat}, we wish to show that $A_{Q,W}$ is bimodule internally $3$-Calabi--Yau with respect to its frozen idempotent $e$, in the sense of Definition~\ref{d:bi3cy}. We will do this, under a mild assumption on $(Q,W)$, in Section~\ref{s:frjacCY}, but first we give some examples.

\begin{eg}
\label{e:a2}
The quiver with potential $(Q,0)$, for $Q$ an $\type{A}_2$ quiver, provides the most basic example revealing all of the combinatorial features of the construction. In this case, we have
\[\lift{Q}=\mathord{\begin{tikzpicture}[baseline={(current bounding box.center)},yscale=-1]
\node at (-1,0) (1) {$1$};
\node at (1,0) (2) {$2$};
\node at (-2,1) (1+) {$\boxed{1^+}$};
\node at (-2,-1) (1-) {$\boxed{1^-}$};
\node at (2,1) (2-) {$\boxed{2^-}$};
\node at (2,-1) (2+) {$\boxed{2^+}$};
\path[-angle 90,font=\scriptsize]
	(1) edge node[below] {$a$} (2)
	(1) edge node[below right] {$\alpha_1$} (1+)
	(1-) edge node[above right] {$\beta_1$} (1)
	(2) edge node[above left] {$\alpha_2$} (2+)
	(2-) edge node[below left] {$\beta_2$} (2);
\path[\frozen,-angle 90,font=\scriptsize]
	(1+) edge[bend right] node[left] {$\delta_1$} (1-)
	(2+) edge[bend right] node[right] {$\delta_2$} (2-)
	(2+) edge[bend left] node[above] {$\delta_a$} (1-);
\end{tikzpicture}}\]
with $\lift{F}$ indicated by the boxed vertices and dashed arrows. The potential on this ice quiver is
\[\lift{W}=\beta_1\delta_1\alpha_1+\beta_2\delta_2\alpha_2-a\beta_1\delta_a\alpha_2.\]
One can check that the frozen Jacobian algebra $A_Q$ attached to this data is isomorphic to the endomorphism algebra of a cluster-tilting object in the Frobenius cluster category consisting of those modules for the preprojective algebra of type $\type{A}_4$ with socle supported at a fixed bivalent vertex \cite{GLS-PFVs}. For an explanation of why the categories arising in \cite{GLS-PFVs} are Frobenius cluster categories, see \cite[Eg.~3.11]{Pressland-iCY}.
\end{eg}
\begin{eg}
\label{e:3-cycle}
Now let $(Q,W)$ be the quiver with potential in which
\[Q=\mathord{\begin{tikzpicture}[baseline=0,xscale=1.5]
\node at (-30:0.75) (3) {$3$};
\node at (-150:0.75) (1) {$1$};
\node at (90:0.75) (2) {$2$};
\path[-angle 90,font=\scriptsize]
(3) edge node[below] {$c$} (1)
(1) edge node[left] {$a$} (2)
(2) edge node[right] {$b$} (3);
\end{tikzpicture}}\]
and $W=cba$. The Jacobian algebra is a cluster-tilted algebra of type $\type{A}_3$, and has infinite global dimension (like any non-hereditary cluster-tilted algebra \cite[Cor.~2.1]{kellerclustertilted}). In this case
\[\widetilde{Q}=\mathord{\begin{tikzpicture}[baseline={(current bounding box.center)},scale=1.2]
\node at (-30:0.75) (3) {$3$};
\node at (-150:0.75) (1) {$1$};
\node at (90:0.75) (2) {$2$};
\node at (60:2) (2-) {$\boxed{2^-}$};
\node at (0:2) (3+) {$\boxed{3^+}$};
\node at (-60:2) (3-) {$\boxed{3^-}$};
\node at (-120:2) (1+) {$\boxed{1^+}$};
\node at (180:2) (1-) {$\boxed{1^-}$};
\node at (120:2) (2+) {$\boxed{2^+}$};
\path[-angle 90,font=\scriptsize]
(3) edge node[below] {$c$} (1)
(1) edge node[left] {$a$} (2)
(2) edge node[right] {$b$} (3)
(2-) edge node[right] {$\beta_2$} (2)
(3) edge node[above] {$\alpha_3$} (3+)
(3-) edge node[right] {$\beta_3$} (3)
(1) edge node[left] {$\alpha_1$} (1+)
(1-) edge node[above] {$\beta_1$} (1)
(2) edge node [left] {$\alpha_2$} (2+);
\path[-angle 90,\frozen,font=\scriptsize]
(1+) edge [bend right=20pt] node[below] {$\delta_c$} (3-)
(3+) edge [bend left=20pt] node[right] {$\delta_3$} (3-)
(3+) edge [bend right=20pt] node[right] {$\delta_b$} (2-)
(2+) edge [bend left=20pt] node[above] {$\delta_2$} (2-)
(2+) edge [bend right=20pt] node[left] {$\delta_a$} (1-)
(1+) edge [bend left=20pt] node[left] {$\delta_1$} (1-);
\end{tikzpicture}}\]
with $\lift{F}$ again indicated by boxed vertices and dashed arrows. The potential is
\[\lift{W}=cba+\beta_1\delta_1\alpha_1+\beta_2\delta_2\alpha_2+\beta_3\delta_3\alpha_3-a\beta_1\delta_a\alpha_2-b\beta_2\delta_b\alpha_3-c\beta_3\delta_c\alpha_1.\]
The associated frozen Jacobian algebra $A_{Q,W}$ also arises from a dimer model on a disk with six marked points on its boundary \cite{BKM}, and is isomorphic to the endomorphism algebra of a cluster-tilting object in Jensen--King--Su's categorification of the cluster algebra structure on the Grassmannian $\Grass{2}{6}$ \cite{JKS}. This category is again a Frobenius cluster category \cite[Eg.~3.12]{Pressland-iCY}. Unlike the first example, this algebra is infinite-dimensional. However, it is Noetherian, so Theorem~\ref{t:icytofrobcat} still applies.
\end{eg}

Since in Examples~\ref{e:a2} and \ref{e:3-cycle} the algebra $A_{Q,W}$ is the endomorphism algebra of a cluster-tilting object in a Frobenius cluster category, it is internally $3$-Calabi--Yau with respect to its frozen idempotent---that is, it has the properties in Remark~\ref{r:i3cy}---by a result of Keller--Reiten \cite[\S5.4]{kellerclustertilted} (see also \cite[Cor.~3.10]{Pressland-iCY}). This foreshadows Theorem~\ref{t:bi3cy} below, which states that under mild assumptions on $(Q,W)$ the algebra $A_{Q,W}$ has the (a priori stronger) bimodule internal Calabi--Yau property from Definition~\ref{d:bi3cy}.

\section{Calabi--Yau properties for frozen Jacobian algebras}
\label{s:frjacCY}

In this section, we will recall from \cite[\S5]{Pressland-iCY} a bimodule complex $0\to\res{A}\to A\to0$ for a frozen Jacobian algebra $A$, exactness of which implies that $A$ is bimodule internally $3$-Calabi--Yau with respect to its frozen idempotent. We show that this complex is exact when $A=A_{Q,W}$ is as in Definition~\ref{d:bigQP}. Thus these frozen Jacobian algebras satisfy the most complicated condition in Theorem~\ref{t:icytofrobcat}. This constitutes the main step in the proof of Theorem~\ref{t:main-cat-thm}.

Our arguments exploit the fact that the algebras we consider are pseudocompact, and so we begin with some useful generalities on such algebras. Further background can be found in \cite{gabrielcategories,brumerpseudocompact,simsoncoalgebras,vandenberghcalabiyau,iusenkopath}, for example.

\begin{defn}
\label{d:pseudocompact}
Let $A$ be a topological $\KK$-algebra. We say that $A$ is \emph{pseudocompact} if there is a system $\mathcal{I}=\{I\}$ of neighbourhoods of $0\in A$ consisting of (open) ideals such that $A/I$ is finite-dimensional for all $I\in\mathcal{I}$ and the natural map
\[A\to{\invlim}_IA/I\]
is an isomorphism. The \emph{Jacobson radical} of $A$, denoted by $J$, is the intersection of maximal closed left ideals of $A$, and we say that $A$ is \emph{$J$-adically complete} if the natural map $A\to\invlim_nA/J^n$ is an isomorphism.
\end{defn}

\begin{prop}
\label{p:cpa-pseudocompact}
If $Q$ is a finite quiver, the complete path algebra $\cpa{\KK}{Q}$ is pseudocompact and $J$-adically complete, where $J$ is the Jacobson radical.
\end{prop}
\begin{proof}
The algebra $\cpa{\KK}{Q}$ may be identified with the continuous dual of the path coalgebra of $Q^\op$, i.e.\ the space of continuous functionals on this coalgebra; see for example \cite[Prop.~8.1(c)]{simsoncoalgebras}. The continuous dual of any coalgebra is pseudocompact \cite{simsoncoalgebrastame}.

The Jacobson radical of $\cpa{\KK}{Q}$ is generated by the arrows. In particular, $\cpa{\KK}{Q}/J^2$ is finite-dimensional, since $Q$ is a finite quiver, and so $\cpa{\KK}{Q}$ is $J$-adically complete by \cite[Prop.~2.7]{iusenkopath}.
\end{proof}

\begin{prop}
\label{p:pseudocompact-quot}
Let $Q$ be a finite quiver. Then for any ideal $I\leq\cpa{\KK}{Q}$ which is closed in the $J$-adic topology, for $J$ the Jacobson radical, the quotient algebra $A=\cpa{\KK}{Q}/I$ is pseudocompact.
\end{prop}
\begin{proof}
This statement (in a more general form) can be found in The Stacks Project \S00M9\footnote{See \href{https://stacks.math.columbia.edu/tag/00M9}{\url{https://stacks.math.columbia.edu/tag/00M9}}}, but we give a direct argument. For each $n\in\ZZ$, there is a short exact sequence
\begin{equation}
\label{eq:ideal-seq}
\begin{tikzcd}
0\arrow{r}&(I+J^n)/J^n\arrow{r}&\cpa{\KK}{Q}/J^n\arrow{r}&A/\pi J^n\arrow{r}&0,
\end{tikzcd}
\end{equation}
where $\pi\colon\cpa{\KK}{Q}\to A=\cpa{\KK}{Q}/I$ is the projection. In particular, it follows that $A/\pi J^n$ is finite-dimensional. Note that the ideals $\pi J^n$ form a system of open neighbourhoods of $0$ in $A$, since $\pi^{-1}(\pi J^n)=I+J^n$ is open in $\cpa{\KK}{Q}$.

The sequence \eqref{eq:ideal-seq} is the $n$-th component of a short exact sequence of inverse systems, each of which has only surjective morphisms and so satisfies (in a trivial way) the Mittag-Leffler condition. Thus we obtain a short exact sequence of inverse limits
\[\begin{tikzcd}
0\arrow{r}&\invlim_n(I+J^n)/J^n\arrow{r}&\invlim_n\cpa{\KK}{Q}/J^n\arrow{r}&\invlim_nA/\pi J^n\arrow{r}&0.
\end{tikzcd}\]
The left-hand term of this sequence may be naturally identified with $\bigcap_{n\geq1}(I+J^n)$, which is the closure of $I$ in the $J$-adic topology, and is thus equal to $I$ since $I$ is closed. Moreover, the middle term is $\cpa{\KK}{Q}$ since this algebra is $J$-adically complete by Proposition~\ref{p:cpa-pseudocompact}. Hence the right-hand term is $A$, which is therefore pseudocompact.
\end{proof}

\begin{cor}
\label{c:frjac-pseudocompact}
For any ice quiver with potential $(Q,F,W)$, the frozen Jacobian algebra $\frjac{Q}{F}{W}$ is pseudocompact.
\end{cor}
\begin{proof}
By definition, $\frjac{Q}{F}{W}$ is the quotient of $\cpa{\KK}{Q}$ by a closed ideal. Thus the result follows by Proposition~\ref{p:pseudocompact-quot}.
\end{proof}

We now return to the setting of ice quivers with potential. Let $(Q,F,W)$ be an ice quiver with potential and write $A=\frjac{Q}{F}{W}$. Recall that $Q_0^\mut=Q_0\setminus F_0$ and $Q_1^\mut=Q_1\setminus F_1$ denote the sets of mutable vertices and unfrozen arrows of $Q$ respectively. For $i\in Q_0$, we write $\Head{i}$ for the set of arrows of $Q$ with head $i$, and $\Tail{i}$ for the set of arrows of $Q$ with tail $i$. We denote by $\mHead{i}$ and $\mTail{i}$ the intersection of $Q_1^\mut$ with $\Head{i}$ and $\Tail{i}$ respectively. Recall from Remark~\ref{r:completion} that the quotient $S:=A/\Jacrad{A}$ of $A$ by its Jacobson radical is a semi-simple algebra, isomorphic as a left $A$-module to the direct sum of the vertex simple $A$-modules. (This property is sometimes \cite[\S3.2]{iusenkopath} called \emph{pointedness}.) Thus $S$ has a basis given by the vertex idempotents $e_i$ for $i\in Q_0$. For the remainder of this section, we write $\tensor=\tensor_S$.

Introduce formal symbols $\rho_\alpha$ for each $\alpha\in Q_1$ and $\omega_v$ for each $v\in Q_0$, and define $S$-bimodule structures on the vector spaces
\begin{align*}
V_0&=\bigdsum_{i\in Q_0}\KK\idemp{i},&
V_1&=\bigdsum_{\alpha\in Q_1}\KK\alpha,&
V_2&=\bigdsum_{\alpha\in Q_1^\mut}\KK\rel{\alpha},&
V_3&=\bigdsum_{i\in Q_0^\mut}\KK\omega_i,
\end{align*}
via the formulae
\[
\idemp{i}\cdot\idemp{i}\cdot\idemp{i}=\idemp{i},\qquad
\idemp{\head{\alpha}}\cdot\alpha\cdot\idemp{\tail{\alpha}}=\alpha,\qquad
\idemp{\tail{\alpha}}\cdot\rel{\alpha}\cdot\idemp{\head{\alpha}}=\rel{\alpha},\qquad
\idemp{i}\cdot\omega_i\cdot\idemp{i}=\omega_i.
\]
Next we describe various morphisms $\mu_k$ of $A$-bimodules. Since $V_0\cong S$, the $A$-bimodule $A\tensor V_0\tensor A$ is canonically isomorphic to $A\tensor A$. Composing this isomorphism with the multiplication map for $A$, we obtain a map $\mu_0\colon A\tensor V_0\tensor A\to A$. Define $\mu_1\colon A\tensor V_1\tensor A\to A\tensor V_0\tensor A$ by
\[\mu_1(x\tensor\alpha\tensor y)=x\tensor\idemp{\head{\alpha}}\tensor\alpha y-x\alpha\tensor\idemp{\tail{\alpha}}\tensor y.\]
For any path $p=\alpha_m\dotsm\alpha_1$ of $\KK Q$, we may define
\[\Delta_\alpha(p)=\sum_{\alpha_i=\alpha}\alpha_m\dotsm\alpha_{i+1}\tensor\alpha_i\tensor\alpha_{i-1}\dotsm\alpha_1,\]
and extend by linearity and continuity to obtain a map $\Delta_\alpha\colon\cpa{\KK}{Q}\to A\tensor V_1\tensor A$. We then define $\mu_2\colon A\tensor V_2\tensor A\to A\tensor V_1\tensor A$ by
\[\mu_2(x\tensor\rel{\alpha}\tensor y)=\sum_{\beta\in Q_1}x\Delta_\beta(\der{\alpha}{W})y.\]
Finally, define $\mu_3\colon A\tensor V_3\tensor A\to A\tensor V_2\tensor A$ by
\[\mu_3(x\tensor\omega_v\tensor y)=\sum_{\alpha\in\Tail{v}}x\tensor\rel{\alpha}\tensor\alpha y-\sum_{\beta\in\Head{v}}x\beta\tensor\rel{\beta}\tensor y.\]
Note that we have $A\otimes V_k\otimes A\otimes_AS=A\otimes V_k\otimes S=A\otimes V_k$ for $0\leq k\leq 3$. Thus for each $1\leq k\leq 3$ we obtain a map $\bar{\mu}_k:=(\mu_k\otimes_AS)\colon A\otimes V_k\to A\otimes V_{k-1}$.

\begin{lem}
\label{l:diff-shape}
Let $1\leq k\leq 3$ and consider $x\otimes v\otimes y\in A\otimes V_k\otimes A$, where $x,y\in A$ and $v\in V_k$. Then
\[\mu_k(x\otimes v\otimes y)=\bar{\mu}_k(x\otimes v)\otimes y + r,\]
where $r\in A\otimes V_{k-1}\otimes Jy$.
\end{lem}
\begin{proof}
This follows directly from the definition of the maps $\mu_k$, recalling that the Jacobson radical $J$ is generated by the arrows of $Q$, and that $A/J=S$.
\end{proof}

\begin{defn}
\label{d:frjacres}
For an ice quiver with potential $(Q,F,W)$ with frozen Jacobian algebra $A$, denote by $\res{A}$ the complex of $A$-bimodules with non-zero terms
\[\begin{tikzcd}
A\tensor V_3\tensor A\arrow{r}{\mu_3}&A\tensor V_2\tensor A\arrow{r}{\mu_2}&A\tensor V_1\tensor A\arrow{r}{\mu_1}&A\tensor V_0\tensor A
\end{tikzcd}\]
and $A\tensor\KK Q_0\tensor A$ in degree $0$. That this sequence is indeed a complex is \cite[Lem.~5.5]{Pressland-iCY}.
\end{defn}

By \cite[Thm.~5.7]{Pressland-iCY}, if $A$ is a frozen Jacobian algebra such that
\begin{equation}
\label{eq:bimod-res}
\begin{tikzcd}
0\arrow{r}&\res{A}\arrow{r}{\mu_0}&A\arrow{r}&0
\end{tikzcd}
\end{equation}
is exact, then $A$ is bimodule internally $3$-Calabi--Yau with respect to the idempotent $e=\sum_{i\in F_0}\idemp{i}$. Our goal for the remainder of the section is to check exactness of \eqref{eq:bimod-res} when $A=A_{Q,W}$ is the algebra from Definition~\ref{d:bigQP}. The following lemma allows us to simplify this calculation; its proof is based on \cite[Prop.~7.5]{Broomhead}, but adapted to exploit pseudocompactness of $A$ in place of a suitable grading.

\begin{lem}
\label{l:simplify-exactness}
The complex \eqref{eq:bimod-res} is exact if and only if
\begin{equation}
\label{eq:proj-res-simples}
\begin{tikzcd}
0\arrow{r}&\res{A}\tensor_AS\arrow{r}{\bar{\mu}_0}&S\arrow{r}&0
\end{tikzcd}
\end{equation}
is exact.
\end{lem}
\begin{proof}
The forward implication holds since $\res{A}\map{\mu_0}A$ is perfect as a complex of right $A$-modules, and so remains exact under $-\tensor_AM$ for any $M\in\Modcat{A}$.

For the converse implication, we use pseudocompactness. Abbreviate $\bar{\mu}_{k,n}=\mu_k\otimes_AA/J^n$ (so that $\bar{\mu}_{k,1}=\bar{\mu}_k$ as defined above). We will show inductively that
\begin{equation}
\label{eq:proj-res-n}
\begin{tikzcd}
0\arrow{r}&\res{A}\tensor_AA/J^n\arrow{r}{\bar{\mu}_{0,n}}&A/J^n\arrow{r}&0
\end{tikzcd}
\end{equation}
is exact for all $n$; the assumed exactness of \eqref{eq:proj-res-simples} gives the base case $n=1$ of this induction. Then the result will follow as in the proof of Proposition~\ref{p:pseudocompact-quot} from the fact that \eqref{eq:proj-res-n} remains exact under taking inverse limits, noting again that the relevant inverse systems satisfy the Mittag-Leffler condition since all their morphisms are surjective.

That \eqref{eq:proj-res-n} is exact at $A/J^n$ (and indeed, that \eqref{eq:bimod-res} is exact at $A$) follows since $\mu_0$ is multiplication in the unital algebra $A$, so we concentrate on the other terms, each of which has the form $A\otimes V_k\otimes A/J^n$ for some $S$-bimodule $V_k$, adopting the convention that $V_k=0$ for $k>3$.

Choose $k\geq 0$, let $\phi_0\in A\otimes V_k\otimes A/J^n$ for some $n\geq2$, and write $\phi_0'$ for the projection of $\phi_0$ to $A\otimes V_k\otimes A/J^{n-1}$. If $\phi_0$ is closed, i.e.\ if $\bar{\mu}_{k,n}(\phi_0)=0$, then $\phi'_0$ is also closed, and and so by induction there is $\psi_0'\in A\otimes V_{k+1}\otimes A/J^{n-1}$ with $\bar{\mu}_{k+1,n-1}(\psi')=\phi'$. Choose any $\psi_0\in A\otimes V_{k+1}\otimes A/J^n$ projecting to $\psi'_0$. Then $\phi_1:=\phi_0-\bar{\mu}_{k+1,n}(\psi_0)$ projects to $\phi_0'-\bar{\mu}_{k,n-1}(\psi_0')=0$, and hence $\phi_1\in A\otimes V_k\otimes J^{n-1}/J^n$. Moreover, $\phi_1$ is closed.

Now let $Y$ be a basis of the finite-dimensional vector space $J^{n-1}/J^n$, so that we may write
\[\phi_1=\sum_{y\in Y}u_y\otimes y\]
for some $u_y\in A\otimes V_k$. Since each $y\in J^{n-1}/J^n$ and $\phi_1$ is closed, we have
\[0=\sum_{y\in Y}\bar{\mu}_{k,n}(u_y\otimes y)=\sum_{y\in Y}\bar{\mu}_k(u_y)\otimes y\]
by Lemma~\ref{l:diff-shape}. It then follows by linear independence of the set $Y$ that $\bar{\mu}_k(u_y)=0$ for all $y\in Y$. By the assumed exactness of \eqref{eq:proj-res-simples}, there are therefore $v_y\in A\otimes V_{k+1}$ with $\bar{\mu}_{k+1}(v_y)=u_y$.

Now let
\[\psi_1=\sum_{y\in Y}v_y\otimes y\in A\otimes V_{k+1}\otimes A/J^n.\]
Using Lemma~\ref{l:diff-shape} again, we may calculate
\[\bar{\mu}_{k+1,n}(\psi_1)=\sum_{y\in Y}\bar{\mu}_{k+1}(v_y)\otimes y=\sum_{y\in Y}u_y\otimes y=\phi_1,\]
and so $\phi_0=\bar{\mu}_{k+1,n}(\psi_0+\psi_1)$ is exact. Thus each sequence \eqref{eq:proj-res-n}, and hence the limit \eqref{eq:bimod-res}, is exact as required.
\end{proof}

Thus it suffices for us to show that \eqref{eq:proj-res-simples} is exact. This complex decomposes along with $S$, so that its exactness is equivalent to the exactness of
\begin{equation}
\label{eq:proj-res-simp-i}
\begin{tikzcd}
0\arrow{r}&\res{A}\tensor_A\simp{i}\arrow{r}{\mu_0\tensor_AS_i}&\simp{i}\arrow{r}&0
\end{tikzcd}
\end{equation}
for each $i\in \lift{Q}_0$, where $\simp{i}$ denotes the vertex simple left $A$-module at $i$.
We may calculate the terms in this complex as
\begin{align}
\label{eq:resA-isos}
\begin{split}
A\tensor\KK Q_1\tensor A\tensor_A\simp{i}&\iso\bigdsum_{b\in\Tail{i}}Ae_{\head{b}},\\
A\tensor\KK Q_2^\mut\tensor A\tensor_A\simp{i}&\iso\bigdsum_{a\in\mHead{i}}Ae_{\tail{a}},\\
A\tensor\KK Q_3^\mut\tensor A\tensor_A\simp{i}&\iso{\begin{cases}A\idemp{i},&i\in Q_0^\mut,\\0,&i\in F_0.\end{cases}}
\end{split}
\end{align}
In the first two cases, the right-hand sides are of the form $\bigoplus_{a\in X}A\idemp{v(a)}$, where $X$ is a set of arrows, and $v\colon X\to Q_0$. We will denote by $x\mapsto x\otimes[a]$ the inclusion $A\idemp{v(a)}\to\bigoplus_{a\in X}A\idemp{v(a)}$ mapping the domain to the summand indexed by $a$; this helps us to distinguish these various inclusions when $v$ is not injective. As a consequence, a general element of the direct sum is $x=\sum_{a\in X}x_a\otimes[a]$ for $x_a\in A\idemp{v(a)}$.

Each arrow $b\in Q_1$ has an associated \emph{right derivative} $\rightder{b}\colon\cpa{\KK}{Q}\to\cpa{\KK}{Q}$, which is linear and continuous and given by
\begin{equation}
\label{eq:rightder}
\rightder{b}(\alpha_\ell\dotsm\alpha_1)=\begin{cases}\alpha_\ell\dotsm\alpha_2,&\alpha_1=b,\\0,&\alpha_1\ne b\end{cases}
\end{equation}
on paths. Using this map, we may write
\[(\mu_2\tensor_A\simp{i})(x)=\sum_{b\in\Tail{i}}\bigg(\sum_{a\in\mHead{i}}x_a\rightder{b}{\der{a}{W}}\bigg)\tensor[b].\]
Calculating $\mu_k\otimes_A S_i$ for the other values of $k$ is similar, and more straightforward.

\begin{rem}
\label{r:BK}
By standard results on presentations of algebras, see for example Butler--King \cite{butlerminimal}, the complex \eqref{eq:bimod-res} is always exact in degrees $-1$, $0$ and $1$, and hence the same is true for each complex \eqref{eq:proj-res-simp-i}. Thus in the following argument, we need only check exactness of \eqref{eq:proj-res-simp-i} at $A\otimes V_3\otimes S_i$ and $A\otimes V_2\otimes S_i$.
\end{rem}

Now let $(Q,W)$ be a quiver with potential, and let $A=A_{Q,W}=\frjac{\lift{Q}}{\lift{F}}{\lift{W}}$.
The following series of results will establish exactness of the complex \eqref{eq:proj-res-simples} for $A$ for each vertex $i\in\lift{Q}_0$, using heavily the explicit set $R$ of defining relations for $A$ given in \eqref{eq:relations}.

\begin{lem}
\label{l:vertex-test}
Let $i\in\lift{Q}_0^\mut=Q_0$. If $y\in Ae_i$ satisfies $y\beta_i=0$, then $y=0$.
\end{lem}
\begin{proof}
Let $\lift{y}$ be an arbitrary lift of $y$ to $\cpa{\KK}{\lift{Q}}e_i$. Now assume $y\beta_i=0$, so $\lift{y}\beta_i\in\close{\Span{R}}$. Since every term of $\lift{y}\beta_i$, when written in the basis of paths of $\lift{Q}$, ends with the arrow $\beta_i$, but no term of any element of $R$ has a term ending with $\beta_i$, we must be able to write
$\lift{y}\beta_i=z\beta_i$
for some $z\in\close{\Span{R}}e_i$. Comparing terms, we see that $\lift{y}=z\in\close{\Span{R}}$, and so $y=0$ in $A$.
\end{proof}

\begin{prop}
\label{p:mut-exact-3}
For $i\in\lift{Q}_0^\mut$, the map $\mu_3\tensor_A\simp{i}\colon A\idemp{i}\to\displaystyle\bigdsum_{a\in\mHead{i}} Ae_{\tail{a}}$ is injective.
\end{prop}
\begin{proof}
We have $(\mu_3\tensor_A\simp{i})(y)=\sum_{a\in\mHead{i}}-ya\otimes[a]$. Since $i$ is mutable, $\mHead{i}=\Head{i}$ contains $\beta_i$ by the construction of $(\lift{Q},\lift{F})$. Thus if $(\mu_3\tensor_A\simp{i})(y)=0$, it follows that $-y\beta_i\tensor[\beta_i]=0$, hence $y\beta_i=0$, and so $y=0$ by Lemma~\ref{l:vertex-test}.
\end{proof}

\begin{lem}
\label{l:arrow-test}
Let $i\in\lift{Q}_0^\mut$. For each $a\in\mHead{i}$, pick $x_a\in Ae_{\tail a}$. If
$x_{\beta_i}\delta_i=\sum_{a\in\mHead{i}\setminus\set{\beta_i}}x_a\beta_{\tail{a}}\delta_a$,
then there exists $y\in Ae_i$ such that $x_a=ya$ for each $a\in\mHead{i}$.
\end{lem}

\begin{proof}
Pick a lift $\lift{x}_a\in\cpa{\KK}{\lift{Q}}$ of each $x_a$. Writing
$p=\lift{x}_{\beta_i}\delta_i-\sum_{a\in\mHead{i}\setminus\set{\beta_i}}\lift{x}_a\beta_{\tail{a}}\delta_a$,
our assumption on the $x_a$ is equivalent to $p\in\close{\Span{R}}$. Since every term of $p$ ends with either $\delta_i$ or $\beta_{\tail{a}}\delta_a$ for some $a\in\mHead{i}\setminus\set{\beta_i}$, and the only element of $R$ including terms ending with these arrows is $\beta_i\delta_i-\sum_{a\in\mHead{i}\setminus\set{\beta_i}}a\beta_{\tail{a}}\delta_a$, we can write
\[p=z_i\delta_i+\sum_{a\in\mHead{i}\setminus\set{\beta_i}}z_a\beta_{\tail{a}}\delta_a+y\Big(\beta_i\delta_i-\sum_{a\in\mHead{i}\setminus\set{\beta_i}}a\beta_{\tail{a}}\delta_a\Big),\]
where $z_i\in\close{\Span{R}}e_i^-$, $z_a\in\close{\Span{R}}e_{\tail{a}}$ and $y\in\cpa{\KK}{\lift{Q}}e_i$. Comparing terms in our two expressions for $p$, we see that $\lift{x}_{\beta_i}=y\beta_i+z_i$ and $\lift{x}_{a}=ya-z_a$.
Since $z_i,z_a\in\close{\Span{R}}$, when we pass to the quotient algebra $A$ we see that $x_{\beta_i}=y\beta_i$ and $x_a=ya$, as required.
\end{proof}

\begin{prop}
\label{p:mut-exact-2}
For $i\in\lift{Q}_0^\mut$, we have $\ker(\mu_2\tensor_A\simp{i})=\im(\mu_3\tensor_A\simp{i})$.
\end{prop}

\begin{proof}
Since $\res{A}$ is a complex, it is enough to show that
$\ker(\mu_2\tensor_A\simp{i})\subseteq\im(\mu_3\tensor_A\simp{i})$.
Let $x=\sum_{a\in\mHead{i}}x_a\tensor[a]\in\bigdsum_{a\in\mHead{i}}Ae_{\tail{a}}$. Then
\[(\mu_2\tensor_A\simp{i})(x)=\sum_{b\in\Tail{i}}\bigg(\sum_{a\in\mHead{i}}x_a\rightder{b}{\der{a}{\lift{W}}}\bigg)\tensor[b]\in\bigdsum_{b\in\Tail{i}}Ae_{\head{b}}.\]
In particular, $\alpha_i\in\Tail{i}$, so if $x\in\ker(\mu_2\otimes_A\simp{i})$, we have
$\left(\sum_{a\in\mHead{i}}x_a\rightder{\alpha_i}{\der{a}{\lift{W}}}\right)\otimes[\alpha_i]=0$.
Using the explicit expressions for the relations $\der{a}{\lift{W}}$ computed in \eqref{eq:relations}, we see that
\[\sum_{a\in\mHead{i}}x_a\rightder{\alpha_i}\der{a}{\lift{W}}=x_{\beta_i}\delta_i-\sum_{a\in\mHead{i}\setminus\set{\beta_i}}x_a\beta_{\tail{a}}\delta_a=0,\]
and so by Lemma~\ref{l:arrow-test} there exists $y\in A\idemp{i}$ such that $x_a=ya$ for each $a$. It follows that $x=(\mu_3\tensor_A\simp{i})(y)$, as required.
\end{proof}

\begin{prop}
\label{p:frozen-res}
If $i^{\pm}\in\lift{F}_0$ then the complex $\begin{tikzcd}[column sep=32pt]0\arrow{r}&\res{A}\tensor_A\simp{i}^\pm\arrow{r}{\mu_0\tensor_AS_i^\pm}&\simp{i}^\pm\arrow{r}&0\end{tikzcd}$ is exact.
\end{prop}
\begin{proof}
Since $i^{\pm}\in\lift{F}_0$, our complex is zero in degree $-3$. By Remark~\ref{r:BK}, we need only to check that $\mu_2\tensor_A\simp{i}^{\pm}$ is injective. Since $\mHead{i^-}=\varnothing$, the complex $\res{A}\tensor_A\simp{i}^-$ is also zero in degree $-2$, so we are left to consider $\mu_2\tensor_A\simp{i}^+$.

Since $\mHead{i^+}=\set{\alpha_i}$, we have $\mu_2\tensor_A\simp{i}^+\colon Ae_i\to\bigdsum_{b\in\Tail{i}^+}Ae_{\head{b}}$. Let $x\in Ae_i$, for which we calculate
\[(\mu_2\tensor_A\simp{i}^+)(x)=\sum_{b\in\Tail{i}^+}x\rightder{b}{\der{\alpha_i}{\lift{W}}}\otimes[b].\]
Assume this is $0$. Considering $\delta_i\in\Tail{i}^+$, we see that we must have $0=x\rightder{\delta_i}{\der{\alpha_i}{\lift{W}}}=x\beta_i$. 
By Lemma~\ref{l:vertex-test}, it follows that $x=0$, and $\mu_2\tensor_A\simp{i}^+$ is injective as required.
\end{proof}

Summarising the discussion above, we are able to establish the desired internal Calabi--Yau property for $A_{Q,W}$.

\begin{thm}
\label{t:bi3cy}
For any quiver with potential $(Q,W)$, the algebra $A_{Q,W}=\frjac{\lift{Q}}{\lift{F}}{\lift{W}}$ is bimodule internally $3$-Calabi--Yau with respect to the frozen idempotent $e=\sum_{i\in Q_0}(e_i^++e_i^-)$.
\end{thm}
\begin{proof}
Given Remark~\ref{r:BK}, Propositions~\ref{p:mut-exact-3} and \ref{p:mut-exact-2} show that \eqref{eq:proj-res-simp-i} is exact when $i$ is a mutable vertex, whereas Proposition~\ref{p:frozen-res} deals with the case that $i$ is frozen. Thus \eqref{eq:proj-res-simples} is exact and so, since $A_{Q,W}$ is pseudocompact by Corollary~\ref{c:frjac-pseudocompact}, it follows from Lemma~\ref{l:simplify-exactness} that \eqref{eq:bimod-res} is also exact. Hence we obtain the result by \cite[Thm.~5.7]{Pressland-iCY}.
\end{proof}

Since for $A=A_{Q,W}$ we have $A/AeA=\jac{Q}{W}$ by construction, Theorem~\ref{t:keller-analogue} from the introduction is an immediate consequence. We record a further corollary, obtained by combining Theorem~\ref{t:bi3cy} with Theorem~\ref{t:icytofrobcat}.

\begin{cor}
\label{c:frobcat-constr}
Let $(Q,W)$ be a Jacobi-finite quiver with potential. Assume that $A=A_{Q,W}$ is Noetherian, let $e=\sum_{i\in Q_0}(e_i^++e_i^-)$ be the frozen idempotent of $A$, let $B=eAe$ be its boundary algebra, and let $T=eA$. Then $T$ is a cluster-tilting object of the stably $2$-Calabi--Yau Frobenius category $\GP(B)$, with $\Endalg{B}{T}\cong A$ and $\stabEndalg{B}{T}\cong\jac{Q}{W}$.\qed
\end{cor}

It is this corollary that we will use to define the Frobenius category $\frobcat_Q$ appearing in Theorem~\ref{t:main-cat-thm} for an acyclic quiver $Q$; to be able to do so, it only remains to check that $A_Q$ is Noetherian in this case, and we will show in Section~\ref{s:acyclic} that it is even finite-dimensional. Unfortunately, at the moment we lack methods for showing that $A_{Q,W}$ is Noetherian in more cases (and expect that $A_{Q,W}$ is not Noetherian general), preventing us from extending Theorem~\ref{t:main-cat-thm} to the case of quivers with cycles. We already observed in Example~\ref{e:3-cycle} that $A_{Q,W}$ is Noetherian (but not finite-dimensional) when $(Q,W)$ is the $3$-cycle with its natural potential. This quiver with potential is Jacobi-finite, so Corollary~\ref{c:frobcat-constr} also applies in this case.

\begin{rem}
\label{r:Ginzburg}
Combinatorially, the algebra $A_{Q,W}$ seems to be related the dg-algebra $\Gamma_{Q,W}$ associated to $(Q,W)$ by Ginzburg \cite[\S4.2]{ginzburgcalabiyau}; the loops in cohomological degree $-2$ in $\Gamma_{Q,W}$ are replaced by the cycles $i\to i^+\to i^-\to i$, and the degree $-1$ arrows are replaced by the paths $\head{a}\to\head{a}^+\to\tail{a}^-\to\tail{a}$. Here we use Amiot's sign conventions \cite[Def.~3.1]{Amiot-ClustCat}, which are opposite to Ginzburg's. By a result of Van den Bergh \cite[Thm.~A.12]{kellerdeformed}, the dg-algebra $\Gamma_{Q,W}$ is bimodule $3$-Calabi--Yau, and we expect this to be related to the fact that $A_{Q,W}$ is bimodule internally $3$-Calabi--Yau with respect to the idempotent determined by the vertices not appearing in Ginzburg's construction. We will show in proving Theorem~\ref{t:main-cat-thm}\ref{t:main-cat-thm-cc} that, when $Q$ is acyclic, the two constructions are also related via a triangle equivalence
\begin{equation}
\label{eq:stab-equiv}
\stabGP(B_Q)\simeq\clustcat{Q}=\per{\Gamma_Q}/\bdcat(\Gamma_Q).
\end{equation}

By a result of Yilin Wu \cite[Lem.~5.10]{Wu-IceQuiver}, it follows from exactness of \eqref{eq:bimod-res} that the relative Ginzburg algebra $\Gamma_{\lift{Q},\lift{F},\lift{W}}$ \cite[Def.~4.20]{Wu-IceQuiver} is concentrated in degree $0$, its $0$-th cohomology being the algebra $A_{Q,W}$. There is thus \cite[Prop.~3.19]{Wu-Higgs} a homotopy pushout diagram
\[\begin{tikzcd}
\dgpreproj_{\lift{F}}\arrow{r}{G}\arrow{d}&A_{Q,W}\arrow{d}\\
*\arrow{r}&\Gamma_{Q,W},
\end{tikzcd}\]
taken in the category of $k$-linear dg-categories with Tabuada's model structure \cite{tabuadastructure}, where the functor $G$ is the deformed relative $3$-Calabi--Yau completion, with respect to $\lift{W}$, of the natural map $\KK\lift{F}\to\KK\lift{Q}$. In particular, this makes $\smash{\dgpreproj_{\lift{F}}}$ the $2$-Calabi--Yau completion of $\KK\lift{F}$ (a dg version of the preprojective algebra of $\lift{F}$). Thus we can reconstruct the Ginzburg dg-algebra $\Gamma_{Q,W}$ from $A_{Q,W}$ (plus enough additional data to recover the dg-functor $G$). It is also possible to obtain the triangle equivalence \eqref{eq:stab-equiv} using this style of argument; see Remark~\ref{r:wu2}.

However, the only properties of the ice quiver with potential $(\lift{Q},\lift{F},\lift{W})$ that we use here are that $\Gamma_{\lift{Q},\lift{F},\lift{W}}$ is concentrated in degree $0$, and that removing the frozen vertices recovers $(Q,W)$; typically there are many ice quivers with potential with these two properties. At present we do not know a natural construction of the specific frozen Jacobian algebra $A_{Q,W}$ from $\Gamma_{Q,W}$, despite the combinatorial similarity of their definitions.
\end{rem}

\section{The acyclic case}
\label{s:acyclic}

Let $(Q,W)$ be a Jacobi-finite quiver with potential, so that the algebra $A_{Q,W}$ from Definition~\ref{d:bigQP} satisfies all of the assumptions of Corollary~\ref{c:frobcat-constr} except possibly Noetherianity. Our goal in this section is to show that if $Q$ is an acyclic quiver, then the algebra $A=A_Q=\frjac{\lift{Q}}{\lift{F}}{\lift{0}}$ is even finite-dimensional, so that this result does indeed apply.

\begin{lem}
\label{l:Q0-to-Q0}
Let $Q$ be a finite quiver and let $p$ be a path in $\lift{Q}$ containing at least one arrow not in $Q_1$, and with $\head{p},\tail{p}\in Q_0$. Then $p$ is in the kernel of the projection $\pi\colon\cpa{\KK}{\lift{Q}}\to A=\frjac{\lift{Q}}{\lift{F}}{\lift{0}}$. Moreover, if $q$ is a path of length at least $5$ containing no arrows of $Q_1$, then $q\in\ker{\pi}$.
\end{lem}
\begin{proof}
Let $\gamma$ be the first arrow of $p$ in $\lift{Q}_1\setminus Q_1$. If $\gamma$ has a predecessor in $p$, then this arrow is in $Q_1$, and so $\tail{\gamma}\in Q_0$. On the other hand, if $\gamma$ is the first arrow of $p$, then $\tail{\gamma}=\tail{p}\in Q_0$ by assumption. By the construction of $\lift{Q}$, it follows that $\gamma=\alpha_i$ for some $i\in Q_0$. Since $\head{p}\in Q_0$, the path $p$ cannot terminate with $\alpha_i$, so this arrow has a successor. Looking again at the definition of $\lift{Q}$, the only options are $\delta_a$ for some $a\in Q_1$ with $\head{a}=i$, or $\delta_i$. We break into two cases.

First assume $\alpha_i$ is followed in $p$ by $\delta_a$ for some $a\in Q_0$. This again cannot be the final arrow of $p$, since $\head{p}\in Q_0$. The only arrow leaving $\head{\delta_a}=i_{\tail{a}}^-$ is $\beta_{\tail{a}}$, so this must be the next arrow of $p$. But after projection to $A$ by $\pi$, we have $\beta_{\tail{a}}\delta_a\alpha_{\head{a}}=\der{a}{W}=0$ since $W=0$, so $p\in\ker{\pi}$.

In the second case, $\alpha_i$ is followed by $\delta_i$. As above, $\delta_i$ must be followed in $p$ by $\beta_i$, and projecting to $A$ yields
\[\beta_i\delta_i\alpha_i=\bigg(\sum_{\substack{a\in Q_1\\\head{a}=i}}a\beta_{\tail{a}}\delta_a\bigg)\alpha_i=\sum_{\substack{a\in Q_1\\\head{a}=i}}a\beta_{\tail{a}}\delta_a\alpha_{\head{a}}=0,\]
so that again $p\in\ker{\pi}$.

For the second statement, we have already shown that $\beta_{\tail{a}}\delta_a\alpha_{\head{a}}\in\ker{\pi}$ for any $a\in Q_1$ and $\beta_i\delta_i\alpha_i\in\ker{\pi}$ for any $i\in Q_0$. It then follows from the bipartite property of $\lift{F}$ that any path which is not in $\ker{\pi}$ and involves only arrows not in $Q_1$ must be a subpath of $\delta_x\alpha_i\beta_i\delta_y$ for some $x,y\in Q_0\cup Q_1$ and $i\in Q_0$ with $\head{x}=i=\tail{y}$. In particular, such a path has length at most $4$.
\end{proof}

\begin{thm}
\label{t:acyclic-fd}
Let $Q$ be an acyclic quiver. Then $A=\frjac{\lift{Q}}{\lift{F}}{\lift{0}}$ is finite-dimensional.
\end{thm}
\begin{proof}
We show that there are finitely many paths of $\lift{Q}$ determining non-zero elements of $A$. By Lemma~\ref{l:Q0-to-Q0}, any path $p$ of $\lift{Q}$ determining a non-zero element of $A$ may not have any subpath with endpoints in $Q_0$ and containing an arrow outside $Q_1$. Thus we must have $p=q_2p'q_1$, where $q_1$ and $q_2$ are (possibly empty) paths not involving arrows of $Q_1$, and $p'$ is a path in $Q$. Since $Q$ is acyclic, there are only finitely many possibilities for $p'$. By Lemma~\ref{l:Q0-to-Q0} again, $q_1$ and $q_2$ have length at most $4$ for $p$ to be non-zero in $A$, and so there are again only finitely many possibilities.
\end{proof}

We have now established everything we need in order to construct the Frobenius cluster category required by Theorem~\ref{t:main-cat-thm}, and to prove the first two parts of this theorem.

\begin{defn}
\label{d:pprin-cat}
Let $Q$ be an acyclic quiver, let $A_Q=\frjac{\lift{Q}}{\lift{F}}{\lift{W}}$ be the frozen Jacobian algebra defined from $Q$ in Definition~\ref{d:bigQP}, and let $B_Q=eA_Qe$ be its boundary algebra. We define $\frobcat_Q=\GP(B_Q)$ to be the category of Gorenstein projective $B_Q$-modules.
\end{defn}

\begin{proof}[Proof of Theorem~\ref{t:main-cat-thm}\ref{t:main-cat-thm-fcc}]
Since $Q$ is an acyclic quiver, $\jac{Q}{0}=\KK Q$ is finite-dimensional. By Theorem~\ref{t:acyclic-fd}, the algebra $A_Q=\frjac{\lift{Q}}{\lift{F}}{\lift{0}}$ is also finite-dimensional, and hence Noetherian.

Since the boundary algebra $B_Q=eA_Qe$ is finite-dimensional, Corollary~\ref{c:frobcat-constr} and Proposition~\ref{p:frob-clust-cat} combine to show that $\frobcat_Q=\GP(B_Q)$ is a Hom-finite Frobenius cluster category, as required.
\end{proof}

\begin{proof}[Proof of Theorem~\ref{t:main-cat-thm}\ref{t:main-cat-thm-cc}]
We use Keller--Reiten's recognition theorem \cite[Thm.~2.1]{kelleracyclic}; by Corollary~\ref{c:frobcat-constr}, $\stab{\frobcat}_Q$ is a $2$-Calabi--Yau triangulated category admitting a cluster-tilting object $T=eA_Q$ with endomorphism algebra $\KK Q$, so $\stab{\frobcat}_Q$ is triangle equivalent to the cluster category $\clustcat{Q}$ as required.
\end{proof}

We already observed after stating Corollary~\ref{c:frobcat-constr} that our construction applies to $(Q,W)$ given by the $3$-cycle with natural potential to produce a stably $2$-Calabi--Yau Frobenius cluster category $\frobcat_{Q,W}=\GP(B_{Q,W})$. We claim that the analogous statement to Theorem~\ref{t:main-cat-thm}\ref{t:main-cat-thm-cc} also holds here, namely that $\stab{\frobcat}_{Q,W}$ is triangle equivalent to Amiot's cluster category $\clustcat{Q,W}$. While Keller--Reiten's recognition theorem no longer applies to the cluster-tilting object $T$ provided by Corollary~\ref{c:frobcat-constr}, since its stable endomorphism algebra is not hereditary, one can in this case find another cluster-tilting object $T'\in\frobcat_{Q,W}$ with $\stabEndalg{\frobcat_{Q,W}}{T'}\iso\KK Q'$, for $Q'$ an orientation of the Dynkin diagram $\type{A}_{3}$. Thus the recognition theorem shows that $\stab{\frobcat}_{Q,W}\simeq\clustcat{Q'}$, and $\clustcat{Q'}\simeq\clustcat{Q,W}$ either by the recognition theorem again or by \cite[Cor.~3.11]{Amiot-ClustCat}.

We note that while Amiot--Reiten--Todorov have a recognition theorem \cite[Thm.~3.1]{amiotubiquity}, which identifies certain stable categories of Frobenius categories admitting a cluster-tilting object with frozen Jacobian endomorphism algebra as generalised cluster categories, this theorem does not apply to our object $T$ when $W\ne0$, because then $(\lift{Q},\lift{F},\lift{W})$ cannot satisfy their assumptions (H3) and (H4). Assumption (H3) requires a non-negative degree function on the arrows of $\lift{Q}$ giving $\lift{W}$ degree $1$, and (H4) requires that the arrows $\alpha_i$, for $i\in Q_0$, all have degree $1$. But if $W\ne0$, then (H3) forces some arrow $a\in Q_1$ to have degree $1$, and then (H4) implies that the potential term $a\beta_{\tail{a}}\delta_a\alpha_{\head{a}}$ has degree at least $2$, so (H3) does not hold.

\begin{rem}
\label{r:wu2}
We could also prove Theorem~\ref{t:main-cat-thm}\ref{t:main-cat-thm-cc} by using further results of Yilin Wu \cite[Thm.~6.2, Thm.~7.9]{Wu-Higgs}, exploiting in particular the fact that $A_{Q,W}$ is quasi-isomorphic to the relative Ginzburg dg-algebra $\Gamma_{\lift{Q},\lift{F},\lift{W}}$ (cf.~Remark~\ref{r:Ginzburg}). This does not increase the generality of the result, since this approach requires $A_{Q,W}$ to be finite-dimensional, a fact we only establish in the acyclic case (and which is false in general). However, it seems more realistic to generalise Wu's results to infinite-dimensional Jacobian algebras than to remove the acyclicity assumption from Keller--Reiten's theorem.
\end{rem}

\section{Mutation and the cluster character}
\label{s:mutation}

To complete the proof of Theorem~\ref{t:main-cat-thm}, we need to understand the relationship between mutation of cluster-tilting objects in $\GP(B_Q)$ and Fomin--Zelevinsky mutation of their quivers. These operations turn out to be compatible, in a much larger class of Frobenius cluster categories, in the sense of the following theorem.

\begin{thm}
\label{t:mutation}
Let $\frobcat$ be a Hom-finite Frobenius cluster category, and assume there is a cluster-tilting object $T\in\frobcat$ such that $\Endalg{\frobcat}{T}\cong\frjac{Q}{F}{W}$, where $Q$ the Gabriel quiver of this algebra. Then
\begin{enumerate}
\item\label{t:mutation-iy} $Q$ has no loops or $2$-cycles incident with any mutable vertex, so the Iyama--Yoshino mutation of $T$ at any indecomposable non-projective summand is well-defined,
\item\label{t:mutation-qpmut} if $T'$ is obtained from $T$ by mutation at such a summand, then $\End_\frobcat(T')^{\op}\cong\frjac{Q'}{F'}{W'}$, where $(Q',F',W')$ is the mutation of $(Q,F,W)$ at the vertex corresponding to this summand, and
\item\label{t:mutation-fzmut} the quiver $Q'$ is both the Gabriel quiver of $\End_\frobcat(T')^{\op}$ and, up to arrows between frozen vertices, the Fomin--Zelevinsky mutation of $Q$ at the appropriate vertex. In particular, it also has no loops or $2$-cycles incident with mutable vertices.
\end{enumerate}
These results then extend inductively to any cluster-tilting object in the mutation class of $T$.
\end{thm}
\begin{proof}
This combines \cite[Thm.~5.15]{Pressland-FJAs} and \cite[Prop.~3]{presslandcorrigendum} (which corrects \cite[Prop.~5.16]{Pressland-FJAs}).
\end{proof}

The proof of \cite[Thm.~5.15]{Pressland-FJAs} referred to in that of Theorem~\ref{t:mutation} is similar to an analogous result of Buan--Iyama--Reiten--Smith for triangulated categories \cite{BIRS2}. The definition of mutation for an ice quiver with potential is given in \cite[Def.~4.1]{Pressland-FJAs}, and is similar to that for ordinary quivers with potential \cite[\S1.2]{BIRS2}. We also explain in \cite{Pressland-FJAs} how to extend the definition of Fomin--Zelevinsky mutation to ice quivers which may have arrows between their frozen vertices.  Thus the same argument proves a stronger version of Theorem~\ref{t:mutation}\ref{t:mutation-fzmut} in which these arrows are still considered, but we will not use this here.

A consequence of Theorem~\ref{t:mutation} is that the Frobenius category $\frobcat_Q$ we attach to an acyclic quiver has a cluster structure in the sense of Buan--Iyama--Reiten--Scott \cite[\S II.1]{BIRS1}, which will allow us to use results of Fu and Keller \cite{FuKel} to complete the proof of Theorem~\ref{t:main-cat-thm}.

\begin{prop}
\label{p:clust-str}
The Frobenius category $\frobcat_Q$ from Definition~\ref{d:pprin-cat} has a cluster structure.
\end{prop}
\begin{proof}
By \cite[Thm.~II.1.6]{BIRS1}, it is enough to check that endomorphism algebras of cluster-tilting objects in $\frobcat_Q$ have no loops or $2$-cycles in their quivers. Because $\frobcat_Q$ is a Hom-finite Frobenius cluster category, this holds for those cluster-tilting objects in the mutation class of $T=eA_Q$ by Theorem~\ref{t:mutation}. But since $\stab{\frobcat}_Q\simeq\clustcat{Q}$ by Theorem~\ref{t:main-cat-thm}\ref{t:main-cat-thm-cc}, every cluster-tilting object of $\frobcat_Q$ is in this mutation class \cite[Thm.~A.1]{BMRT}.
\end{proof}

\begin{proof}[Proof of Theorem~\ref{t:main-cat-thm}\ref{t:main-cat-thm-clustvars}--\ref{t:main-cat-thm-mut}]
Since $\frobcat_Q$ is a Hom-finite stably $2$-Calabi--Yau Frobenius category, with cluster-tilting object $T=eA_Q$, it has a cluster character $\FKcc{T}\colon\Ob(\frobcat_Q)\to\pprinclust{Q}$ relative to $T$, as defined by Fu and Keller---see \cite[\S3]{FuKel}, or \eqref{eq:FK-cc} below, for an explicit formula for $\FKcc{T}$.

Moreover, in Fu--Keller's terminology \cite[Def.~5.1]{FuKel}, the pair $(\frobcat_Q,T)$ is a Frobenius $2$-Calabi--Yau realisation of the polarised principal coefficient cluster algebra $\pprinclust{Q}$; this follows from Proposition~\ref{p:clust-str} and by using Theorem~\ref{t:icytofrobcat}\ref{t:icytofrobcat-isos} to see that $\Endalg{\frobcat_Q}{T}\iso A_Q$ has ice quiver $\lift{Q}$.

Both Theorem~\ref{t:main-cat-thm}\ref{t:main-cat-thm-clustvars} and Theorem~\ref{t:main-cat-thm}\ref{t:main-cat-thm-mut} now follow from \cite[Thm.~5.4(a)]{FuKel}. Note that references to `reachable' objects in this theorem may be dropped since every indecomposable rigid object and every cluster-tilting object in $\frobcat_Q$ is reachable by \cite[Thm.~A.1]{BMRT}, as in the proof of Proposition~\ref{p:clust-str}. Note also that, unlike us, Fu--Keller do not consider the frozen variables of $\pprinclust{Q}$ to be cluster variables, which leads them to exclude the indecomposable projective objects from their statement. However, it is straightforward to check (see \cite[Thm.~3.3(a)]{FuKel}) that $\FKcc{T}$ also gives a bijection between the indecomposable projectives in $\frobcat_Q$, which are summands of $T$, and the frozen variables of $\pprinclust{Q}$.
\end{proof}

Just as for Theorem~\ref{t:main-cat-thm}\ref{t:main-cat-thm-cc}, the conclusions of Theorem~\ref{t:main-cat-thm}\ref{t:main-cat-thm-clustvars}--\ref{t:main-cat-thm-mut} still hold when $Q$ is a $3$-cycle, replacing $B_Q$ by $B_{Q,W}$ for $W$ the natural potential on $Q$. In this case $\pprinclust{Q}$ is the cluster algebra of the Grassmannian $G_2^6$, and $\GP(B_{Q,W})$ is the Grassmannian cluster category constructed by Jensen--King--Su \cite{JKS} (see Example~\ref{e:3-cycle}). The analogue of Theorem~\ref{t:mutation} holds for Grassmannian cluster categories, despite them being Hom-infinite---see \cite[Prop.~5.17]{Pressland-FJAs} and its preceding discussion.

For a general quiver with potential $(Q,W)$, even assuming that $A_{Q,W}$ is Noetherian so that the category $\frobcat_{Q,W}$ is well-defined, it may still be the case that the cluster-tilting objects in this category form several different mutation classes. However, this would not significantly affect our arguments here: while we can only rule out loops and $2$-cycles in the mutation class of $T=eA_{Q,W}$ via our methods, and so we cannot show that $\frobcat_{Q,W}$ has a cluster structure in the strict sense, Fu--Keller's bijections (with the word `reachable' reinstated) are still valid in this situation. A more detailed discussion of this issue can be found in \cite[\S6]{Pressland-Post}.

\section{An extriangulated categorification}
\label{s:extriangulated}

In this section we will prove Corollary~\ref{c:extriangulated}, but first we need to define the extriangulated category appearing in the statement. To keep our discussion brief, and since we only deal with very particular examples, we will not define extriangulated categories fully here, and instead refer the reader to Nakaoka and Palu \cite{nakaokaextriangulated}. The most important piece of data for us will be that of the `extension group' $\EE(X,Y)$ for any two objects $X$ and $Y$ in the extriangulated category; in our context, the objects of the category will be the same as those of the exact category $\frobcat_Q$, and we will have $\EE(X,Y)=\Ext^1_{\frobcat_Q}(X,Y)$.

\begin{defn}
\label{d:prin-cat}
Let $Q$ be an acyclic quiver, with associated Frobenius category $\frobcat_Q=\GP(B_Q)$ as in Definition~\ref{d:pprin-cat}. Write $e^-=\sum_{i\in Q_0}e_i^-$, and $P^-=B_{Q}e^-$. Define $\frobcat^+_{Q}=\frobcat_{Q}/[P^-]$ to be the quotient of $\frobcat_{Q}$ by the the ideal of morphisms factoring through an object of $\add{P^-}$. 
\end{defn}

\begin{prop}
\label{p:extriangulated}
The category $\frobcat^+_{Q}$ carries the structure of an extriangulated category. Moreover, it is a Frobenius extriangulated category \cite[Def.~7.1]{nakaokaextriangulated}, and its stable category $\stab{\frobcat}^+_{Q}$ coincides with $\stab{\frobcat}_{Q}$, meaning in particular that it is triangulated and $2$-Calabi--Yau.
\end{prop}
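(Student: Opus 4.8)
The strategy is to exhibit $\frobcat^+_{Q,W}$ as an ideal quotient of the exact, hence extriangulated, category $\frobcat_{Q,W}=\GP(B_{Q,W})$ by a subcategory of projective--injective objects, and then to deduce all three assertions from the general behaviour of such quotients, as developed by Nakaoka and Palu. The first observation is that $\add P^-$ consists of projective--injective objects of $\GP(B_{Q,W})$: indeed $P^-=B_{Q,W}e^-$ is a direct summand of $B_{Q,W}$, so it is a projective $B_{Q,W}$-module and hence a projective object of the exact category $\GP(B_{Q,W})$, which by \Cref{icytofrobcat}\ref{icytofrobcat-frob} is Frobenius, so that $P^-$ is also injective. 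Writing $e^+=\sum_{i\in Q_0}e_i^+$ and $P^+=B_{Q,W}e^+$, the subcategory of all projective--injective objects of $\GP(B_{Q,W})$ is $\add(P^+\dsum P^-)=\add B_{Q,W}$.

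Next I would invoke \cite[Prop.~3.30]{nakaokamutation}: if $(\cat,\EE,\mathfrak{s})$ is extriangulated and $\mathcal{N}$ is an additive subcategory every object of which is both $\EE$-projective and $\EE$-injective, then the ideal quotient $\cat/[\mathcal{N}]$ is extriangulated, with bifunctor obtained by descending $\EE$ along the quotient functor. Applied to $\cat=\frobcat_{Q,W}$, $\EE=\Ext^1_{\frobcat_{Q,W}}$ and $\mathcal{N}=\add P^-$, this produces the extriangulated structure on $\frobcat^+_{Q,W}$; concretely $\EE_{\frobcat^+_{Q,W}}(\bar C,\bar A)=\Ext^1_{\frobcat_{Q,W}}(C,A)$ for objects $C,A$ of $\frobcat_{Q,W}$, which is well-defined since adding or removing summands in $\add P^-$ affects neither $\Ext^1$ nor, by the projectivity and injectivity of those summands, the maps it induces. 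To see that $\frobcat^+_{Q,W}$ is a \emph{Frobenius} extriangulated category, I would identify its projective and injective objects. Since $\EE_{\frobcat^+_{Q,W}}(\overline{P^+},-)=\Ext^1_{\frobcat_{Q,W}}(P^+,-)=0$ and likewise $\EE_{\frobcat^+_{Q,W}}(-,\overline{P^+})=0$, the image $\overline{P^+}$ is both projective and injective in $\frobcat^+_{Q,W}$. Conversely, every object of $\GP(B_{Q,W})$ admits a deflation from, and an inflation into, an object of $\add(P^+\dsum P^-)$, whose $\add P^-$-summand becomes zero in the quotient; as the image of a conflation in $\frobcat_{Q,W}$ is a conflation in $\frobcat^+_{Q,W}$, it follows that $\frobcat^+_{Q,W}$ has enough projectives and enough injectives, all contained in $\add\overline{P^+}$. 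The usual splitting of a deflation onto a projective (resp.\ of an inflation out of an injective) object, together with the Krull--Schmidt property of $\GP(B_{Q,W})$ and hence of its ideal quotient, then gives $\mathrm{Proj}(\frobcat^+_{Q,W})=\add\overline{P^+}=\mathrm{Inj}(\frobcat^+_{Q,W})$, which is the definition \cite[Def.~7.1]{nakaokamutation}.

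Finally, the stable category $\stab{\frobcat}^+_{Q,W}$ is by definition the quotient of $\frobcat^+_{Q,W}$ by the ideal $[\mathrm{Inj}(\frobcat^+_{Q,W})]=[\overline{P^+}]$; pulling this back through $\frobcat_{Q,W}\to\frobcat^+_{Q,W}$ and composing ideal quotients, we obtain $\stab{\frobcat}^+_{Q,W}=\frobcat_{Q,W}/[P^+\dsum P^-]=\frobcat_{Q,W}/[\add B_{Q,W}]=\stabGP(B_{Q,W})=\stab{\frobcat}_{Q,W}$, which is triangulated by \cite[\S I.2]{happeltriangulated} and $2$-Calabi--Yau by \Cref{icytofrobcat}\ref{icytofrobcat-cy} (cf.\ \Cref{frobcat-constr}).

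The one point requiring care, and which underpins everything above, is the claim that $\EE$ descends to $\frobcat^+_{Q,W}$ unchanged --- that morphisms factoring through $\add P^-$ annihilate $\Ext^1$ in each variable --- since this single fact simultaneously yields the extriangulated structure via \cite[Prop.~3.30]{nakaokamutation}, the description of its projective--injectives, and the computation of the stable category; the remaining verifications are routine bookkeeping with the Nakaoka--Palu axioms.
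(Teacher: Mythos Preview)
Your proposal is correct and follows essentially the same approach as the paper: both invoke \cite[Prop.~3.30]{nakaokamutation} for the extriangulated structure, use the identification $\EE_{\frobcat^+_{Q,W}}(X,Y)=\Ext^1_{\frobcat_{Q,W}}(X,Y)$ to determine the projective--injective objects and deduce the Frobenius property, and compute the stable category as the successive ideal quotient by $[P^-]$ and then $[P^+]$. The only cosmetic difference is that the paper reads off \emph{both} directions of ``projective in $\frobcat^+_{Q,W}$ $\Leftrightarrow$ projective in $\frobcat_{Q,W}$'' directly from the equality of $\EE$ and $\Ext^1$, whereas you obtain the ``only if'' direction via enough projectives plus Krull--Schmidt; either route is fine.
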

\begin{proof}
Since $P^-$ is a projective-injective object of $\GP(B_{Q})$, the extriangulated structure on $\frobcat_{Q}^+$ is induced from the exact structure on $\frobcat_{Q}$ as in \cite[Prop.~3.30]{nakaokaextriangulated}. Since the extension groups $\EE_{\frobcat^+_{Q}}(X,Y)$ in this extriangulated category coincide with the extension groups $\Ext^1_{\frobcat_{Q}}(X,Y)$ in the exact category $\frobcat_{Q}$, we see that an object is projective in $\frobcat^+_Q$ if and only if it is projective in $\frobcat_{Q}$, if and only if it is injective in $\frobcat_{Q}$, if and only if it is injective in $\frobcat^+_{Q}$. This agreement of extension groups also allows us to deduce the fact that $\frobcat^+_{Q}$ has enough projective and injective objects \cite[Def.~3.25]{nakaokaextriangulated} from this fact for $\GP(B_{Q})$, and thus conclude that $\frobcat^+_Q$ is Frobenius. It also follows from the agreement of projective-injective objects in $\frobcat^+_{Q}$ and $\frobcat_{Q}$ that $\stab{\frobcat}^+_{Q}=\stab{\frobcat}_{Q}$, the latter being a $2$-Calabi--Yau triangulated category by Corollary~\ref{c:frobcat-constr}.
\end{proof}

Note that if $(Q,W)$ is a Jacobi-finite quiver with potential such that $A_{Q,W}$ is Noetherian and internally Calabi--Yau with respect to $e$ (such as the $3$-cycle with its usual potential), so that we have a stably $2$-Calabi--Yau Frobenius category $\frobcat_{Q,W}$, we can define $\frobcat_{Q,W}^+=\frobcat_{Q,W}/[B_{Q,W}e^-]$. The statement of Proposition~\ref{p:extriangulated} also holds for this category, with the same proof.

%

\begin{proof}[Proof of Corollary~\ref{c:extriangulated}]
Statement \ref{c:extriangulated-cc} follows from Proposition~\ref{p:extriangulated}, using Theorem~\ref{t:main-cat-thm}\ref{t:main-cat-thm-cc} to see that $\stab{\frobcat}_Q^+=\stab{\frobcat}_Q$ is equivalent to the cluster category $\clustcat{Q}$.

To deduce the remaining statements from Theorem~\ref{t:main-cat-thm}, first note that
\[\Ext^1_{\frobcat_Q}(X,Y)=\EE_{\frobcat^+_Q}(X,Y)=\stabHom_{\frobcat_Q}(X,Y[1]).\]
for any $X$ and $Y$ in the common set of objects of the three categories $\frobcat_Q$, $\frobcat^+_Q$ and $\stab{\frobcat}_Q$. This means that a set of representatives of isomorphism classes of cluster-tilting objects in $\frobcat_Q$ is also such a set for $\frobcat_Q^+$ and for $\stab{\frobcat}_Q$. Moreover, since $\frobcat^+_Q$ and $\frobcat_Q$ have the same projective-injective objects, a direct summand of one of these cluster-tilting objects is indecomposable non-projective in one of these subcategories if and only if this is the case in all three. Thus two of these cluster-tilting objects are related by mutation at an indecomposable summand in one of the categories if and only if they are related by mutation at the same summand in all three. Consequently, the identity map is a bijection from the set of cluster-tilting objects of $\frobcat_Q$ to the set of cluster-tilting objects of $\frobcat^+_Q$, commuting with mutation. A more detailed discussion of mutation of cluster-tilting objects in extriangulated categories can be found in \cite{changcluster} (see also \cite{zhoutriangulated}).

Finally, the indecomposable rigid objects of $\frobcat_Q^+$ are, up to isomorphism, precisely the indecomposable rigid objects of $\frobcat_Q$ which are not summands of $P^-$.

It now follows from Theorem~\ref{t:main-cat-thm}\ref{t:main-cat-thm-clustvars}--\ref{t:main-cat-thm-mut} that composing Fu--Keller's cluster character $\Ob(\frobcat_Q)\to\pprinclust{Q}$ with the map $\pprinclust{Q}\to\princlust{Q}$ evaluating the frozen variables $x_i^-$ at $1$ provides the required bijections. To see Corollary~\ref{c:extriangulated}\ref{c:extriangulated-mut}, note additionally that each seed of $\princlust{Q}$ is obtained from one of $\pprinclust{Q}$ by removing the frozen variables $x_i^-$ and deleting the corresponding vertices (together with any incident arrows) from the quiver. Similarly, one obtains the quiver of $\Endalg{\frobcat_Q^+}{T}$ from that of $\Endalg{\frobcat_Q}{T}$ by removing the vertices corresponding to summands of $P^-$.
\end{proof}

\section{Boundary algebras}
\label{s:bdy-algs}

Since the objects of the Frobenius category $\frobcat_Q$ in Theorem~\ref{t:main-cat-thm} are modules for the idempotent subalgebra $B_Q=eA_Qe$ determined by the frozen vertices of $\lift{Q}$, we wish to describe this subalgebra more explicitly. In this section, we will give a presentation of $B_Q$ via a quiver with relations.

Recall that the double quiver $\double{Q}$ of a quiver $Q$ has vertex set $Q_0$ and arrows $Q_1\cup\dual{Q}_1$, where $\dual{Q}_1=\set{\dual{\alpha}:\alpha\in Q_1}$. The head and tail maps agree with those of $Q$ on $Q_1$, and are defined by $\head{\dual{\alpha}}=\tail{\alpha}$ and $\tail{\dual{\alpha}}=\head{\alpha}$ on $\dual{Q}_1$. The complete preprojective algebra of $Q$ is
\[\preproj_{Q}=\cpa{\KK}{\double{Q}}/\close{\langle\textstyle\sum_{\alpha\in Q_1}[\alpha,\dual{\alpha}]\rangle}\]
and, up to isomorphism, depends only on the underlying graph of $Q$. We begin with the following general statement for frozen Jacobian algebras, which reveals some of the relations of $B_{Q,W}$ for an arbitrary quiver with potential $(Q,W)$.

\begin{prop}
\label{p:universal-preproj}
Let $(Q,F,W)$ be an ice quiver with potential, let $A=\frjac{Q}{F}{W}$ and let $B=eAe$ be the boundary algebra of $A$. Then there is an algebra homomorphism $\pi\colon\preproj_{F}\to B$ given by $\pi(\idemp{i})=\idemp{i}$ for $i\in F_0$, and $\pi(\alpha)=\alpha$, $\pi(\dual{\alpha})=\der{\alpha}{W}$ for $\alpha\in F_1$.
\end{prop}
\begin{proof}
It suffices to check that $\pi(\sum_{\alpha\in F_1}[\alpha,\dual{\alpha}])=0$, i.e.\ that $\sum_{\alpha\in F_1}[\alpha,\der{\alpha}{W}]=0$.
By construction, for any $i\in Q_0$ we have
\[\sum_{\alpha\in\vin{i}}\alpha\der{\alpha}{W}-\sum_{\beta\in\vout{i}}\der{\beta}{W}\beta=0\]
in $\cpa{\KK}{Q}$. Projecting to $A$ and summing over vertices, we see that
\[0=\sum_{\alpha\in Q_1}[\alpha,\der{\alpha}{W}]=\sum_{\alpha\in Q_1^\mut}[\alpha,\der{\alpha}{W}]+\sum_{\alpha\in F_1}[\alpha,\der{\alpha}{W}]=\sum_{\alpha\in F_1}[\alpha,\der{\alpha}{W}],\]
where the final equality holds since $\der{\alpha}{W}=0$ in $A$ whenever $\alpha\in Q_1^\mut$.
\end{proof}

\begin{rem}
Familiarity with the constructions of \cites{GLS-PFVs,BIRS1,JKS} may make it tempting to conjecture that the map $\pi$ in Proposition~\ref{p:universal-preproj} is surjective, at least when $\frjac{Q}{F}{W}$ is bimodule internally $3$-Calabi--Yau, but this is in fact rarely the case. Indeed, we will see in this section that $\pi$ fails to be surjective for our frozen Jacobian algebras $\frjac{\lift{Q}}{\lift{F}}{\lift{W}}$ whenever $Q$ is acyclic with a path of length $2$; see Example~\ref{e:a3-bdy-eg} below.
\end{rem}

We now turn to our description of $B_Q$, beginning with what will turn out to be its Gabriel quiver.

\begin{defn}
Let $Q$ be an acyclic quiver, and consider the frozen subquiver $\lift{F}$ of $\lift{Q}$, which has vertex set $\lift{F}_0=\set{i^+,i^-:i\in Q_0}$ 
and arrows
\[
\delta_i\colon i^+\to i^-,\qquad
\delta_a\colon \head{a}^+\to\tail{a}^-
\]
for each $i\in Q_0$ and $a\in Q_1$. We define a quiver $\Lambda_Q$ by adjoining to $\lift{F}$ an arrow
\[\dual{\delta_p}\colon\tail{p}^-\to\head{p}^+\]
for each path $p$ of $Q$. Since $Q$ is acyclic, it has finitely many paths, and so $\Lambda_Q$ is a finite quiver.
\end{defn}

If $p=\idemp{i}$ is the trivial path at $i\in Q_0$, we write $\dual{\delta}_p=\dual{\delta}_i$ to avoid a double subscript. The double quiver of $\lift{F}$ appears as the subquiver of $\Lambda_Q$ obtained by excluding the arrows $\dual{\delta}_p$ for $p$ of length two or more; the notation for the arrows of $\Lambda_Q$ is chosen to be consistent with that used earlier for the arrows of this double quiver.

\begin{eg}
\label{e:a3-bdy-eg}
Let
\[Q=\mathord{\begin{tikzpicture}[baseline={([yshift=-1.2ex]current bounding box.center)}]
\node at (-1,0) (1) {$1$};
\node at (1,0) (2) {$2$};
\node at (3,0) (3) {$3$};
\path[-angle 90,font=\scriptsize]
	(1) edge node[above] {$a$} (2)
	(2) edge node[above] {$b$} (3);
\end{tikzpicture}}\]
be a linearly oriented quiver of type $\type{A}_3$. Then $\Lambda_Q$ is the following quiver.
\[\Lambda_Q=\mathord{\begin{tikzpicture}[baseline={([yshift=3.2ex]current bounding box.center)},xscale=1.5]
\node at (0,0) (1) {$1^+$};
\node at (1,0) (2) {$1^-$};
\node at (2,0) (3) {$2^+$};
\node at (3,0) (4) {$2^-$};
\node at (4,0) (5) {$3^+$};
\node at (5,0) (6) {$3^-$};
\path[-angle 90,font=\scriptsize]
	(1) edge[bend left] node[above] {$\delta_1$} (2)
	(2) edge[bend left] node[below] {$\dual{\delta}_1$} (1)
	(2) edge[bend right] node[below] {$\dual{\delta}_a$} (3)
	(3) edge[bend right] node[above] {$\delta_a$} (2)
	(3) edge[bend left] node[above] {$\delta_2$} (4)
	(4) edge[bend left] node[below] {$\dual{\delta}_2$} (3)
	(4) edge[bend right] node[below] {$\dual{\delta}_b$} (5)
	(5) edge[bend right] node[above] {$\delta_b$}  (4)
	(5) edge[bend left] node[above] {$\delta_3$} (6)
	(6) edge[bend left] node[below] {$\dual{\delta}_3$} (5)
	(2) edge[bend right=70] node[below] {$\dual{\delta}_{ba}$} (5);
\end{tikzpicture}}\]
\end{eg}

\begin{defn}
Let $Q$ be a quiver. A \emph{zig-zag} in $Q$ is a triple $(q,a,p)$, where $p$ and $q$ are paths in $Q$, and $a\in Q_1$ is an arrow such that $\head{p}=\head{a}$ and $\tail{q}=\tail{a}$.
Thus if $a\colon v\to w$ is an arrow of $Q$, a zig-zag involving $a$ is some configuration
\[\begin{tikzpicture}[xscale=2,yscale=1.3] 
\node at (0,0) (w) {$w$};
\node at (1,0) (v) {$v$};
\path[-angle 90,font=\scriptsize]
	(v) edge node[above] {$a$} (w);
\path[-angle 90,font=\scriptsize,dotted]
	(1,1) edge node [above left] {$p$} (w)
	(v) edge node [below right] {$q$} (0,-1);
\end{tikzpicture}\]
where the dotted arrows denote paths. We call the zig-zag \emph{strict} if $p\ne ap'$ for any path $p'$ and $q\ne q'a$ for any path $q'$, but do not exclude these possibilities in general. If $z=(q,a,p)$ is a zig-zag, then we define $\head{z}=\head{q}$ and $\tail{z}=\tail{p}$.
\end{defn}

We now write down elements of $\cpa{\KK}{\Lambda_Q}$ that will turn out to be a set of generating relations for $B_Q$, having three flavours, as follows.
\begin{enumerate}[label=({R}\arabic*)]
\item\label{i:path-rel-t} For each path $p$ of $Q$, let
\[r_1(p)=\dual{\delta}_p\delta_{\tail{p}}-\sum_{\substack{a\in Q_1\\\head{a}=\tail{p}}}\dual{\delta}_{pa}\delta_a.\]
\item\label{i:path-rel-h} For each path $p$ of $Q$, let
\[r_2(p)=\delta_{\head{p}}\dual{\delta}_p-\sum_{\substack{a\in Q_1\\\tail{a}=\head{p}}}\delta_a\dual{\delta_{ap}}.\]
\item\label{i:zigzag-rel} For each zig-zag $(q,a,p)$ of $Q$, let
\[r_3(q,a,p)=\dual{\delta}_{q}\delta_a\dual{\delta}_{p}.\]
\end{enumerate}
We write $I$ for the the ideal of $\cpa{\KK}{\Lambda_Q}$ generated by the union of these three sets of relations. This generating set is usually not minimal, as for certain zig-zags $(q,a,p)$, the relation $r_3(q,a,p)$ may already lie in the ideal generated by relations of the form $r_1$ and $r_2$. For example, if $a\in Q_1$ is an arrow such that $i=\tail{a}$ is not incident with any other arrows of $Q$, then
\[
r_1(a)=\dual{\delta}_a\delta_i,\qquad
r_2(e_i)=\delta_i\dual{\delta_i}-\delta_a\dual{\delta}_a,
\]
so in $\cpa{\KK}{\Lambda_Q}/\close{\Span{r_1(a),r_2(e_i)}}$ we already have
\[r_3(a,a,a)=\dual{\delta_a}\delta_a\dual{\delta}_a=\dual{\delta}_a\delta_{i}\dual{\delta}_{i}=0.\]
One can check that if $(q,a,p)$ is a strict zig-zag, then $r_3(q,a,p)$ is not redundant, but this condition is not necessary; if $Q$ is a linearly oriented quiver of type $\type{A}_4$ and $a$ is the middle arrow, then the non-strict zig-zag $(a,a,a)$ yields an irredundant relation.

\begin{rem}
When $p=e_i$ is a vertex idempotent, the relations $r_1(e_i)$ and $r_2(e_i)$ reduce to the preprojective relations on the double quiver of $\lift{F}$, of the form predicted by Proposition~\ref{p:universal-preproj}. Each arrow $a\in Q_1$ is part of the trivial strict zig-zag $(e_{\tail{a}},a,e_{\head{a}})$, and so contributes an irredundant relation $r_3(e_{\tail{a}},a,e_{\head{a}})=\dual{\delta}_{\tail{a}}\delta_a\dual{\delta}_{\head{a}}$.
\end{rem}

Let $\Phi\colon\cpa{\KK}{\Lambda_Q}\to A_Q$ be the map given by the identity on the vertices of $\Lambda_Q$ and the arrows $\delta_i$ and $\delta_a$ for $i\in Q_0$ and $a\in Q_1$; this makes sense as these are subsets of the vertices and arrows of $\lift{Q}$. On the remaining arrows $\dual{\delta}_p$ of $\Lambda_Q$, we define $\Phi(\dual{\delta}_p)=\alpha_{\head{p}}p\beta_{\tail{p}}$.

\begin{prop}
\label{well-def}
The map $\Phi$ above induces a well-defined map $\Phi\colon\cpa{\KK}{\Lambda_Q}/\close{I}\to B_Q$.
\end{prop}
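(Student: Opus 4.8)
The plan is to verify two things: that the image of $\Phi$ lies in $B=eA_Qe$, and that $\Phi$ annihilates the closed ideal $I$, so that it factors through $\KK\Gamma_Q/I$ with image in $B$. The first is immediate: every vertex of $\Gamma_Q$ lies in $\lift{F}_0$, so $\Phi$ sends the identity $\sum_v\idemp{v}$ of $\KK\Gamma_Q$ to the frozen idempotent $e$, whence $\Phi(x)=e\,\Phi(x)\,e\in eA_Qe=B$ for every $x$. For the second, note first that $\Phi$ is continuous: it sends each arrow of $\Gamma_Q$ into the arrow ideal $\rad{A_Q}$ (the arrows $\delta_i,\delta_a$ to themselves, and $\dual{\delta}_p$ to the path $\alpha_{\head{p}}p\beta_{\tail{p}}$ of length at least two), and this ideal is nilpotent because $A_Q$ is finite-dimensional by \Cref{acyclic-fd}; so $\Phi$ kills some power of the arrow ideal of $\KK\Gamma_Q$, and it suffices to check that $\Phi$ sends each of the relations $r_1(p)$, $r_2(p)$ and $r_3(q,a,p)$ to zero.

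The only input needed for this is the explicit presentation of $A_Q=\frjac{\lift{Q}}{\lift{F}}{\lift{0}}$ by the relations \eqref{relations} specialised to $W=0$, which give, in $A_Q$,
\[\beta_i\delta_i=\sum_{\substack{a\in Q_1\\\head{a}=i}}a\beta_{\tail{a}}\delta_a,\qquad \delta_i\alpha_i=\sum_{\substack{a\in Q_1\\\tail{a}=i}}\delta_a\alpha_{\head{a}}a\qquad(i\in Q_0),\]
together with $\beta_{\tail{a}}\delta_a\alpha_{\head{a}}=0$ for all $a\in Q_1$. For the relations of type $r_1$, I would compute $\Phi(\dual{\delta}_p\delta_{\tail{p}})=\alpha_{\head{p}}p\beta_{\tail{p}}\delta_{\tail{p}}$, apply the first identity at $i=\tail{p}$ to rewrite this as $\sum_{\head{a}=\tail{p}}\alpha_{\head{p}}(pa)\beta_{\tail{a}}\delta_a$, and then observe that $\alpha_{\head{p}}(pa)\beta_{\tail{a}}=\alpha_{\head{pa}}(pa)\beta_{\tail{pa}}=\Phi(\dual{\delta}_{pa})$; this identifies the result with $\Phi\bigl(\sum_{\head{a}=\tail{p}}\dual{\delta}_{pa}\delta_a\bigr)$, so $\Phi(r_1(p))=0$. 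The relations of type $r_2$ are handled by the mirror-image computation, starting from $\Phi(\delta_{\head{p}}\dual{\delta}_p)=\delta_{\head{p}}\alpha_{\head{p}}p\beta_{\tail{p}}$, applying the second identity at $i=\head{p}$, and using $\alpha_{\head{a}}(ap)\beta_{\tail{p}}=\Phi(\dual{\delta}_{ap})$. For a relation $r_3(q,a,p)$, the zig-zag conditions $\head{p}=\head{a}$ and $\tail{q}=\tail{a}$ give
\[\Phi(\dual{\delta}_q\delta_a\dual{\delta}_p)=\alpha_{\head{q}}\,q\,\bigl(\beta_{\tail{a}}\delta_a\alpha_{\head{a}}\bigr)\,p\,\beta_{\tail{p}},\]
and the bracketed factor is zero in $A_Q$, so $\Phi(r_3(q,a,p))=0$.

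I do not expect a genuine obstacle here; the proposition is a direct computation, and the only care required is bookkeeping. One must consistently respect the right-to-left composition convention, and check that the reindexations $p\mapsto pa$ and $p\mapsto ap$ turn the terms produced by the defining relations of $A_Q$ into exactly the terms of $r_1(p)$ and $r_2(p)$ after applying $\Phi$, including the degenerate case $p=\idemp{i}$, where $r_1(\idemp{i})$ and $r_2(\idemp{i})$ become the preprojective relations on the double quiver of $\lift{F}$ and the computation is consistent with \Cref{universal-preproj}.
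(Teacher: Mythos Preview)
Your proposal is correct and follows essentially the same approach as the paper: check that the image lies in $B$ (both of you observe that every generator lands in a path with frozen endpoints), then verify by direct computation using the relations \eqref{relations} that $\Phi$ annihilates each generator $r_1(p)$, $r_2(p)$, $r_3(q,a,p)$ of $I$. Your extra remark about continuity, using nilpotency of $\rad{A_Q}$ via \Cref{acyclic-fd} to pass from the generators of $I$ to its closure, is a point the paper leaves implicit.
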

\begin{proof}
Since $\Phi$ sends every vertex or arrow of $\Lambda_Q$ to the image in $A_Q$ of a path in $\lift{Q}$ with frozen head and tail, it takes values in $B_Q$. It remains to check that it is zero on each of the generating relations of $I$, which we do by explicit calculation. Let $p$ be a path in $Q$. Then
\begin{align*}
\Phi(r_1(p))&=\alpha_{\head p}p\beta_{\tail{p}}\delta_{\tail{p}}-\sum_{\substack{a\in Q_1\\\head{a}=\tail{p}}}\alpha_{\head p}pa\beta_{\tail{a}}\delta_a=\alpha_{\head{p}}p(\der{\alpha_{\tail{p}}}{\lift{W}})=0,\\
\Phi(r_2(p))&=\delta_{\head{p}}\alpha_{\head p}p\beta_{\tail{p}}-\sum_{\substack{a\in Q_1\\\tail{a}=\head{p}}}\delta_a\alpha_{\head{a}}ap\beta_{\tail{p}}=(\der{\beta_{\head{p}}}{\lift{W}})p\beta_{\tail{p}}=0.
\end{align*}
If $(q,a,p)$ is a zig-zag, then
\[\Phi(r_3(q,a,p))=\alpha_{\head{r}}q\beta_{\tail{a}}\delta_a\alpha_{\head{a}}p\beta_{\tail{p}}=0,\]
since $0=\der{a}{\lift{W}}=\der{a}{W}-\beta_{\tail{a}}\delta_a\alpha_{\head{a}}=-\beta_{\tail{a}}\delta_a\alpha_{\head{a}}$ by acyclicity of $Q$.
\end{proof}

\begin{thm}
Let $Q$ be an acyclic quiver. Then the map $\Phi\colon\cpa{\KK}{\Lambda_Q}/\close{I}\to B_Q$, where $\Phi$, $\Lambda_Q$ and $I$ are all defined as above, is an isomorphism.
\end{thm}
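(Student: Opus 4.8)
Since $\Phi$ is a well-defined algebra homomorphism by \Cref{well-def}, and $B_Q=eA_Qe$ is finite-dimensional by \Cref{acyclic-fd}, it is enough to prove that $\Phi$ is surjective and that $\dim_\KK\KK\Gamma_Q/I\le\dim_\KK B_Q$; the finiteness of the left-hand side will emerge along the way.

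For surjectivity, observe that $B_Q$ is spanned by the images in $A_Q$ of paths of $\lift{Q}$ with both endpoints in $\lift{F}_0$, so it suffices to see that each such path lies in $\im\Phi$. The quiver $\lift{Q}$ is rigid near its frozen vertices: the only arrows out of $i^+$ are the frozen ones $\delta_i,\delta_a$, the only arrow out of $i^-$ is $\beta_i$, and the only arrow into $i^+$ is $\alpha_i$. Combining this with \Cref{Q0-to-Q0}, which kills any subpath having both endpoints in $Q_0$ and passing through an arrow not in $Q_1$, one checks that every nonzero path between frozen vertices is an alternating concatenation of arrows $\delta_i$, $\delta_a$ and of segments $\alpha_{\head{p}}p\beta_{\tail{p}}$ for paths $p$ of $Q$. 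Since these are exactly $\Phi(\delta_i)$, $\Phi(\delta_a)$ and $\Phi(\dual{\delta}_p)$, we conclude $B_Q\subseteq\im\Phi$.

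For the dimension bound, I would first produce an explicit finite spanning set $\mathcal{N}$ of $\KK\Gamma_Q/I$. The quiver $\Gamma_Q$ is bipartite, with $\delta$-type arrows running from `$+$' to `$-$' vertices and $\dual{\delta}$-type arrows the other way, so every path alternates between the two types; in particular a path with at least two $\dual{\delta}$-arrows contains a length-three subpath $\dual{\delta}_q\,\delta\,\dual{\delta}_p$. If $\delta=\delta_a$ this is $r_3(q,a,p)\in I$, while if $\delta=\delta_i$ one application of $r_1$ or $r_2$ rewrites it as a combination of such subpaths, again in $I$. Hence modulo $I$ every path reduces to a combination of paths with at most one $\dual{\delta}$-arrow; as no two $\delta$-arrows are composable, these have length at most three, and applying $r_1,r_2$ once more to remove any vertex-$\delta$ adjacent to the $\dual{\delta}$-arrow, one may take $\mathcal{N}$ to consist of the vertex idempotents, the arrows $\delta_i$, $\delta_a$, $\dual{\delta}_p$, and certain `sandwiches' $\delta_a\dual{\delta}_p$, $\dual{\delta}_p\delta_a$, $\delta_b\dual{\delta}_p\delta_a$. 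Thus $\KK\Gamma_Q/I$ is finite-dimensional with $\#\mathcal{N}\ge\dim_\KK\KK\Gamma_Q/I$.

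It now suffices to show that $\Phi$ is injective on $\mathcal{N}$ and that $\Phi(\mathcal{N})$ is linearly independent in $B_Q$, for then $\#\mathcal{N}\ge\dim_\KK\KK\Gamma_Q/I\ge\dim_\KK B_Q\ge\#\Phi(\mathcal{N})=\#\mathcal{N}$ forces equalities throughout, whence $\Phi$ is an isomorphism. Each element of $\mathcal{N}$ maps to an explicit path of $\lift{Q}$, read off from the alternating description above, and I would verify that these paths are pairwise distinct and linearly independent in $A_Q=\KK\lift{Q}/\close{\Span{R}}$ by checking that none of them contains a leading term of a defining relation from \eqref{relations} for a suitable admissible order — equivalently, that $R$ is a noncommutative Gr\"obner basis, so that reduced paths of $\lift{Q}$ form a $\KK$-basis of $A_Q$. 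This is the step I expect to be the main obstacle: the reductions producing $\mathcal{N}$ are routine, but gaining enough control of a basis of the a priori complicated algebra $A_Q$ is not, and care is needed because $\{r_1,r_2,r_3\}$ is not a minimal generating set of $I$ (some $r_3$ being redundant). A more self-contained route to the same conclusion is a direct lifting argument: given $x\in\KK\Gamma_Q$ with $\Phi(x)=0$, write $\Phi(x)=\sum_j u_j\rho_j v_j$ with $\rho_j\in R$; using \Cref{Q0-to-Q0} one shows that every summand contributing to boundary paths must be the $\Phi$-image of a $\KK\Gamma_Q$-multiple of one of $r_1,r_2,r_3$ — indeed $\der{\alpha_i}{\lift{W}}$, $\der{\beta_i}{\lift{W}}$ and $\der{a}{\lift{W}}$, after suitable pre- and post-composition, are exactly these three families — and then an induction on path length yields $x\in I$, i.e.\ $\ker\Phi\subseteq I$.
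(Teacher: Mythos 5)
Your surjectivity argument is essentially the paper's: both exploit \Cref{Q0-to-Q0} and the rigid structure of $\lift{Q}$ near the frozen vertices to decompose any nonzero boundary path of $\lift{Q}$ into alternating pieces $\delta_i$, $\delta_a$, and $\alpha_{\head{p}}p\beta_{\tail{p}}$. The divergence is in the injectivity step. You propose a dimension-counting route via a finite spanning set $\mathcal{N}$ of $\KK\Gamma_Q/I$ and linear independence of $\Phi(\mathcal{N})$ in $B_Q$, the latter to be established by showing $R$ is a noncommutative Gr\"obner basis for the defining ideal of $A_Q$. Your reduction of $\KK\Gamma_Q/I$ to a finite spanning set using bipartiteness of $\Gamma_Q$ and the relations $r_1,r_2,r_3$ is sound, and the logical scheme $\#\mathcal{N}\geq\dim\KK\Gamma_Q/I\geq\dim B_Q\geq\#\Phi(\mathcal{N})=\#\mathcal{N}$ is correct. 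The genuine gap, which you yourself flag, is the linear-independence step: you never verify the Gr\"obner-basis claim (overlap/diamond conditions for $R$ under a chosen admissible order), and without it the bound $\dim B_Q\geq\#\Phi(\mathcal{N})$ is unestablished. Your alternative direct lifting sketch is also left as an outline.

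The paper avoids this entirely by invoking the criterion of Buan--Iyama--Reiten--Smith \cite[Prop.~3.3]{buanmutation}, which converts the question of whether a surjection $\KK\Gamma_Q/I\to B_Q$ is an isomorphism into the exactness of explicit presentation-type sequences \eqref{B-complex+} and \eqref{B-complex-} for the projective $B$-modules $Be_i^\pm$, whose left-hand maps are built from right-derivatives of the generators of $I$. Exactness is then checked term-by-term using the explicit relations \eqref{relations} for $A_Q$, exactly the kind of lifting-and-comparing-coefficients manipulation you gesture at in your second route, but organised so that one only needs to show certain kernels are hit, never to exhibit a global basis. This is the systematic tool you were missing: it sidesteps any need for a Gr\"obner basis or an explicit basis of $A_Q$, and is structurally parallel to the exactness checks already carried out in \Cref{frjacCY} for the bimodule complex $\res{A}$. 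If you want to salvage your own approach, the direct lifting route is closer to feasible than the Gr\"obner one, but you would effectively be re-proving the BIRS criterion in this special case; it is cleaner to cite it.
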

\begin{proof}
We begin by showing surjectivity. As in the proof of Theorem~\ref{t:acyclic-fd}, we may use Lemma~\ref{l:Q0-to-Q0} to see that any path in $\lift{Q}$ determining a non-zero element of $A$ has the form $p=q_1p'q_2$ where $q_1$ and $q_2$ contain no arrows of $Q_1$, and $p'$ is a path of $Q$. If $p$ has frozen head and tail, then $q_1$ and $q_2$ must be non-zero, so we even have
\[p=q_1'\alpha_{\head{p'}}p'\beta_{\tail{p'}}q_2'=q_1'\Phi(\dual{\delta_{p'}})q_2'\]
Now $q_1'$ and $q_2'$ are, like $p$, paths of $\lift{Q}$ with frozen head and tail, but with the additional property that they include no arrows of $Q_1$. Let $q$ be such a path. If $q$ contains the arrow $\beta_i$ for some $i\in Q_0$, then this arrow must have a successor in $q$, since its head is unfrozen. Moreover, this successor must be $\alpha_i$, since this is the only arrow outside of $Q_1$ that may follow $\beta_i$. Similarly, any occurrence of $\alpha_i$ in $q$ must be preceded by $\beta_i$. It follows that $q$ is either a vertex idempotent $e_i^\pm=\Phi(e_i^\pm)$, or is formed by composing paths of the form $\delta_i=\Phi(\delta_i)$ for $i\in Q_0$, $\delta_a=\Phi(\delta_a)$ for $a\in Q_1$, or $\alpha_i\beta_i=\Phi(\dual{\delta}_i)$ for $i\in Q_0$, and so is in the image of $\Phi$. We conclude that the projection to $A_Q$ of any path in $\lift{Q}$ with frozen head and tail lies in the image of $\Phi$. Since these projections span $B_Q$, we see that $\Phi$ is surjective.

From now on, we abbreviate $B=B_Q$. To complete the proof, we will use \cite[Prop.~3.3]{BIRS2}, which in this context states that $\Phi$ is an isomorphism if and only if
\begin{equation}
\label{B-complex+}
\begin{tikzcd}[column sep=30pt]
\displaystyle\bigdsum_{\substack{\text{$p$ path in $Q$}\\\tail{p}=i}}Be_{\head{p}}^+\arrow{r}{f}
&Be_i^-\oplus\Big(\displaystyle\bigdsum_{\substack{a\in Q_1\\\head{a}=i}}Be_{\tail{a}}^-\Big)\arrow{r}{(\cdot\delta_i,
\cdot\delta_a)}&\rad{Be_i^+}\arrow{r}&0
\end{tikzcd}
\end{equation}
and
\begin{equation}
\label{B-complex-}
\begin{tikzcd}
\displaystyle\Big(\bigdsum_{\substack{\text{$p$ path in $Q$}\\\tail{p}=i}}Be_{\head{p}}^-\Big)\dsum\Big(\bigdsum_{\substack{\text{$z$ zig-zag}\\\tail{z}=i}}Be_{\head{z}}^+\Big)\arrow{r}{g}
&\displaystyle\bigdsum_{\substack{\text{$p$ path in $Q$}\\\tail{p}=i}}Be_{\head{p}}^+\arrow{r}{\cdot\Phi(\dual{\delta}_p)}&\rad{Be_i^-}\arrow{r}&0
\end{tikzcd}
\end{equation}
are exact sequences for all $i\in Q_0$. Here the left-most maps in each sequence are obtained from our generators for the ideal $I$ by right-differentiation \eqref{eq:rightder} and the application of $\Phi$, as prescribed in \cite{BIRS2}, so they act on components by
\begin{equation}
\label{rel-maps}
\begin{aligned}
f(ye_{\head{p}}^+)&=y\Phi(\dual{\delta}_p)\tensor[i]-\sum_{\substack{a\in Q_1\\\head{a}=i}}y\Phi(\dual{\delta}_{pa})\tensor[a],\\
g(ye_{\head{p}}^-)&=y\delta_{\head{p}}\tensor[p]-\sum_{\substack{a\in Q_1\\\tail{a}=\head{p}}}y\delta_a\tensor[ap]\\
g(ye_{\head{z}}^+)&=y\Phi(\dual{\delta}_{q})\delta_a\otimes[p],\ \text{where $z=(q,a,p)$.}
\end{aligned}
\end{equation}
Here we have dealt with the ambiguity about which summand contains each term as in Section~\ref{s:frjacCY}, denoting elements of $\bigdsum_{a\in X}Be_{v(a)}$ by $\sum_{a\in X}x_a\otimes[a]$, with $x_a\in Be_{v(x)}$ and $x_a\otimes[a]$ denoting its inclusion into the summand of $\bigdsum_{a\in X}Be_{v(a)}$ indexed by $a$. We adopt the convention here that the summand $B\idemp{i}^-$ appearing in the middle term of \eqref{B-complex+} is indexed by the vertex $i$.

Since $\Phi$ is well-defined and surjective, sequences \eqref{B-complex+} and \eqref{B-complex-} are complexes and exact at $\rad{Be_i^+}$ and $\rad{Be_i^-}$ respectively, so we need only check exactness at the middle term in each case. We proceed as in Section~\ref{s:frjacCY}, using the explicit set $R$ of relations for $A_Q$ from \eqref{eq:relations}, and begin with \eqref{B-complex+}. Let $x_i\in e\cpa{\KK}{\lift{Q}}e_i^-$ and $x_a\in e\cpa{\KK}{\lift{Q}}e_{\tail{a}}^-$ for each $a\in Q_1$ with $\head{a}=i$, determining the element
\[x=x_i\otimes[i]+\sum_{\substack{a\in Q_1\\\head{a}=i}}x_a\otimes[a]\]
of the middle term of \eqref{B-complex+}. Assume that $x$ is a cycle for this complex, i.e.\ that
\[x_i\delta_i+\sum_{\substack{a\in Q_1\\\head{a}=i}}x_a\delta_a\in\close{\Span{R}}.\]
Since the only generating relation for $A_Q$ with terms ending in $\delta_i$ or $\delta_a$ for $a\in Q_1$ with $\head{a}=i$ is $\der{\alpha_i}{W}$, it follows that in $\cpa{\KK}{\lift{Q}}$ we have
\begin{align*}
x_i\delta_i+\sum_{\substack{a\in Q_1\\\head{a}=i}}x_a\delta_a&=z_i\delta_i+\sum_{\substack{a\in Q_1\\\head{a}=i}}z_a\delta_a+y\der{\alpha_i}{W}\\
&=z_i\delta_i+\sum_{\substack{a\in Q_1\\\head{a}=i}}z_a\delta_a+y\bigg(\beta_i\delta_i-\sum_{\substack{a\in Q_1\\\head{a}=i}}a\beta_{\tail{a}}\delta_a\bigg)\end{align*}
for $z_i,z_a\in\close{\Span{R}}$ and $y\in\cpa{\KK}{\lift{Q}}\idemp{i}$. Comparing terms, we see that $x_i=z_i+y\beta_i$ and $x_a=z_a-ya\beta_{\tail{a}}$.
Since $\head{x_i}$ and $\head{x_a}$ are frozen, but $\head{\beta_j}$ is unfrozen for all $j\in Q_0$, we must have
\[y=\sum_{\substack{\text{$p$ path in $Q$}\\\tail{p}=i}}y_p\alpha_{\head p}p.\]
for some $y_p\in Be_{\head{p}}^+$. Projecting to $B$, we have
\[
x_i=\sum_{\substack{\text{$p$ path in $Q$}\\\tail{p}=i}}y_p\Phi(\dual{\delta}_p),\qquad
x_a=\sum_{\substack{\text{$p$ path in $Q$}\\\tail{p}=i}}-y_p\Phi(\dual{\delta}_{pa}),
\]
Thus 
\[y=\sum_{\substack{\text{$p$ path in $Q$}\\\tail{p}=i}}y_p\otimes p\]
satisfies $f(y)=x$, and so sequence \eqref{B-complex+} is exact.

Now we turn to \eqref{B-complex-}. For each path $p$ in $Q$ with $\tail{p}=i$, pick $x_p\in e\cpa{\KK}{\lift{Q}}e_{\head{p}}^+$, and assume that
\[\sum_{\substack{\text{$p$ path in $Q$}\\\tail{q}=i}}x_p\alpha_{\head{p}}p\beta_i\in\close{\Span{R}},\]
so that
\[x=\sum_{\substack{\text{$p$ path in $Q$}\\\tail{p}=i}}x_p\otimes[p]\]
is a cycle in the middle term of \eqref{B-complex-}.
By comparison with the generating relations, we see that we may write
\[\sum_{\substack{\text{$p$ path in $Q$}\\\tail{p}=i}}x_p\alpha_{\head{p}}p\beta_i=\sum_{\substack{\text{$p$ path in $Q$}\\\tail{p}=i}}\Bigg(z_p\alpha_{\head{p}}p\beta_i+y_p\bigg(\delta_{\head{p}}\alpha_{\head{p}}-\sum_{\substack{b\in Q_1\\\tail{b}=\head{p}}}\delta_b\alpha_{\head{b}}b\bigg)p\beta_i-\sum_{\substack{a\in Q_1\\\head{a}=\head{p}}}y_{a,p}(\beta_{\tail{a}}\delta_a\alpha_{\head{a}})p\beta_i\Bigg)\]
for some $z_p\in\close{\Span{R}}$, $y_p\in\cpa{\KK}{\lift{Q}}\idemp{\head{p}}^-$ and $y_{a,p}\in e\cpa{\KK}{\lift{Q}}\idemp{\tail{a}}$. Each path $p$ in the sum is either the trivial path $e_i$ or of the form $p=br$ for some arrow $b$ and path $r$. By comparing terms, we deduce that after projection to $B$ we have
\[
x_i=y_i\delta_i-\sum_{\substack{a\in Q_1\\\head{a}=i}}y_{a,e_i}\beta_{\tail{a}}\delta_a,\qquad
x_{br}=y_{br}\delta_{\head{b}}-y_{r}\delta_b-\sum_{\substack{a\in Q_1\\\head{a}=\head{b}}}y_{a,br}\beta_{\tail{a}}\delta_a,
\]
Since $y_{a,p}\in e\cpa{\KK}{\lift{Q}}\idemp{\tail{a}}$, we must have
\[y_{a,p}=\sum_{\substack{\text{$q$ path in $Q$}\\\tail{q}=\tail{a}}}y_{q,a,p}\alpha_{\head{q}}q\]
for some $y_{q,a,p}\in e\KK \lift{Q}e_{\head{q}}^+$. The triple $z=(q,a,p)$ occurring in a subscript here satisfies $\head{a}=\head{q}$ and $\tail{a}=\tail{p}$, so it is a zig-zag. One may then calculate explicitly using \eqref{rel-maps} that the $y_p$ and $y_z=y_{q,a,p}$ give a preimage
\[y=\sum_{\substack{\text{$p$ path in $Q$}\\\tail{p}=i}}y_p\otimes[p]+\sum_{\substack{\text{$z$ zig-zag}\\\tail{z}=i}}y_z\otimes[z]\]
of $x$ under $g$.
\end{proof}

We close this section with the following curious property of the category $\frobcat_Q=\GP(B_Q)$. Recall that $\Sub(X)$, for $X\in\fgmod{B_Q}$, denotes the full subcategory of $\fgmod{B_Q}$ on objects $M$ admitting a monomorphism $M\to X^n$ for some $n$.

\begin{prop}
Write $P^+=\bigdsum_{i\in Q_0}B_Qe_k^+$. If $Q$ is acyclic and has no isolated vertices, then $\GP(B_Q)\subseteq\Sub(P^+)$.
\end{prop}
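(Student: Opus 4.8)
The plan is to reduce the statement to the indecomposable projective $B_Q$-modules and then to embed each ``$-$'' projective explicitly into a power of $P^+$. First I would use that $\GP(B_Q)$ is a Frobenius exact category whose projective--injective objects are exactly the projective $B_Q$-modules (\Cref{icytofrobcat}); in particular it has enough injectives, so every $X\in\GP(B_Q)$ admits a monomorphism into a projective $B_Q$-module, hence into a finite direct sum of copies of the indecomposable projectives $B_Qe_k^+$ and $B_Qe_k^-$ for $k\in Q_0$. Since $\Sub(P^+)$ is closed under submodules and finite direct sums and each $B_Qe_k^+$ is a direct summand of $P^+$, it suffices to prove that $B_Qe_i^-\in\Sub(P^+)$ for every $i\in Q_0$.

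Now fix $i\in Q_0$. The arrows of $\lift{Q}$ with head $i^-$ are precisely $\delta_i$ and the arrows $\delta_a$ with $a\in Q_1$ and $\tail{a}=i$; each of these is a path of $\lift{Q}$ with frozen head and tail, hence an element of $B_Q$, and right multiplication by them gives a morphism of left $B_Q$-modules
\[\phi_i\colon B_Qe_i^-\longrightarrow B_Qe_i^+\dsum\bigdsum_{a\in Q_1,\ \tail{a}=i}B_Qe_{\head{a}}^+,\qquad b\longmapsto\bigl(b\delta_i,\,(b\delta_a)_a\bigr),\]
whose target is a direct summand of a finite direct sum of copies of $P^+$. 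So the proof reduces to showing that $\phi_i$ is a monomorphism. I would remark that this is exactly where the hypotheses are needed: if $i$ is isolated then the target of $\phi_i$ is just $B_Qe_i^+$, $B_Qe_i^-=\KK e_i^-\dsum\KK\dual{\delta}_i$, and $\dual{\delta}_i\delta_i$ is the defining relation $r_1(e_i)$, so that $0\neq\dual{\delta}_i\in\ker\phi_i$; both the argument below and the conclusion genuinely fail for isolated vertices.

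To show $\phi_i$ is a monomorphism I would argue in the spirit of \Cref{vertex-test} and \Cref{arrow-test}, using the explicit relations \eqref{relations} with $W=0$. Suppose $b\in\ker\phi_i$; since $\beta_i$ is the unique arrow of $\lift{Q}$ with tail $i^-$, one may write $b=\lambda e_i^-+b'\beta_i$ with $\lambda\in\KK$ and $b'\in eA_Qe_i$ (the element $b'$ being unique by \Cref{vertex-test}). Rewriting $b\delta_i=0$ by means of the relation $\beta_i\delta_i=\sum_{\head{c}=i}c\beta_{\tail{c}}\delta_c$ turns it into the equation $(-\lambda e_i^-)\delta_i=\sum_{\head{c}=i}(b'c)\beta_{\tail{c}}\delta_c$ in $A_Q$, and \Cref{arrow-test} then provides $y\in A_Qe_i$ with $-\lambda e_i^-=y\beta_i$ and $b'c=yc$ for every arrow $c$ of $Q$ with $\head{c}=i$; comparing radical parts forces $\lambda=0$, and then $y=0$ by \Cref{vertex-test}, so that $b=b'\beta_i$ and $b'c=0$ for all such $c$.

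The remaining step, which I expect to be the main obstacle, is to deduce $b'=0$ from the equations $b\delta_a=b'\beta_i\delta_a=0$ for the arrows $a\in Q_1$ with $\tail{a}=i$. Here I would split $b'$ according to the first arrow of its constituent paths out of $i$ (namely $\alpha_i$ or an arrow of $Q$ out of $i$), exploit the relations $\der{a}{\lift{0}}=-\beta_i\delta_a\alpha_{\head{a}}$ and $\der{\alpha_{\head{a}}}{\lift{0}}$ via a term-comparison argument of the flavour used to prove \Cref{vertex-test}, and invoke \Cref{Q0-to-Q0} to the effect that every path contributing to an element of $eA_Qe_i$ is a path of $Q$ out of $i$ followed by a path of frozen arrows of bounded length. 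Propagating the vanishing just obtained outward along $Q$ --- a procedure that terminates precisely because $Q$ is acyclic --- should then force $b'=0$, hence $b=0$, completing the proof. The delicate aspect throughout is the bookkeeping with the explicit generating relations $R$ of $A_Q$; acyclicity is what makes the outward propagation stop, while ``no isolated vertices'' is what supplies the arrow needed to begin it.
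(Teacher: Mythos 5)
Your reduction is exactly the paper's: since $\GP(B_Q)$ is Frobenius with injectives $\add B_Q$, it suffices to show $B_Qe_i^-\in\Sub(P^+)$ for each $i$, and your remark about isolated vertices correctly pinpoints where both the argument and the statement fail. After that, however, you diverge from the paper and the proof is not complete.

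The paper does not assemble the ``total'' map $\phi_i$ by all arrows with head $i^-$. Instead it uses the no-isolated-vertex hypothesis immediately as the dichotomy ``$i$ is not a source, or $i$ is not a sink,'' and in each case shows that a \emph{single} right multiplication --- by $\delta_i\colon B_Qe_i^-\to B_Qe_i^+$ in the non-source case, or by $\delta_a\colon B_Qe_i^-\to B_Qe_{\head a}^+$ for one chosen $a\in Q_1$ with $\tail a=i$ in the non-sink case --- is already injective. Crucially, these injectivity checks are done by lifting to the presentation $B_Q\cong\KK\Gamma_Q/I$ established in \Cref{bdy-algs}, where the generating relations $r_1,r_2,r_3$ of $I$ are known explicitly: only the $r_1(p)$ with $\tail p=k$ have terms ending in $\delta_k$ (resp.\ $\delta_a$), and a direct comparison of coefficients finishes the argument in a few lines. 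Your proposal works instead inside $A_Q=\frjac{\lift Q}{\lift F}{\lift 0}$ via \eqref{relations}, \Cref{vertex-test} and \Cref{arrow-test}, which makes the bookkeeping considerably heavier than it needs to be, because one has to re-derive inside $A_Q$ information that the $\Gamma_Q/I$ presentation packages once and for all.

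The genuine gap is your final paragraph. Your manipulation of the $\delta_i$-component is fine (modulo signs): writing $b=\lambda e_i^-+b'\beta_i$, applying \Cref{arrow-test} to $b\delta_i=0$, and comparing radical parts does give $\lambda=0$ and $b'c=0$ for all arrows $c$ of $Q$ with $\head c=i$. But the step that actually closes the proof --- deducing $b'=0$ from $b'c=0$ for $c$ into $i$ together with $b'\beta_i\delta_a=0$ for $a$ out of $i$ --- is only sketched, flagged by you as ``the main obstacle,'' and it is not clear that the proposed ``outward propagation along $Q$'' works as stated: the equations $b'c=0$ constrain $b'$ via arrows pointing \emph{into} $i$, not along the paths $p$ leaving $i$ that make up $b'$, so the induction step you gesture at is not set up. Had you followed the paper and worked in $\KK\Gamma_Q/I$ with just one of the arrows $\delta_i$ or $\delta_a$, the hypothesis ``not a source / not a sink'' does exactly the work you are struggling to do here: it guarantees that the relevant $r_1$-relation has at least two terms, so comparing coefficients forces all the $y_p$ (hence $x$) to vanish. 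I would recommend abandoning the all-arrows map $\phi_i$ and the $A_Q$-level relations and redoing the injectivity via the boundary algebra presentation.
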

\begin{proof}
Write $B=B_Q$. Since $\GP(B)$ is a Frobenius category with injective objects those in $\add{B}$, it suffices to show that $B\in\Sub(P^+)$, or that $Be_i^\pm\in\Sub(P^+)$ for each $i$. Since this is true of $Be_i^+$ by definition of $P^+$, it remains to show $Be_i^-\in\Sub(P^+)$ for each $i\in Q_0$. Since $Q$ has no isolated vertices, $i$ cannot be both a source and a sink.

First assume $i$ is not a source in $Q$. Then the map $Be_i^-\to Be_i^+$ given by right multiplication by $\delta_i$ is injective, as follows. If $x\in Be_i^-$ satisfies $x\delta_i=0$ in $Be_i^+$, then lifting to $\cpa{\KK}{\Lambda_Q}$ and using the explicit generating set of $I$, we have
\[x\delta_i=z\delta_i+\sum_{\substack{\text{$p$ path in $Q$}\\\tail{p}=i}}y_p\bigg(\dual{\delta}_p\delta_i-\sum_{\substack{a\in Q_1\\\head{a}=i}}\dual{\delta_{pa}}\delta_a\bigg)\]
for some $z\in I$ and $y_p\in e\cpa{\KK}{\Lambda_Q}\idemp{\head{p}}^+$. Since $i$ is not a source, the sum over arrows on the right-hand side is non-empty. By comparing coefficients we see that $y_p=0$ for all $p$, and hence $x=z=0$ in $A$.

Now assume $i$ is not a sink. Pick $a\in Q_1$ with $\tail{a}=i$. Then the map $Be_i^-\to Be_{\head{a}}^+$ given by right multiplication by $\delta_a$ is injective as follows. If $x\in Be_i^-$ satisfies $x\delta_a=0$ in $Be_{\head{a}}^+$, then lifting to $\cpa{\KK}{\Lambda_Q}$ and using our explicit relations, we have
\[x\delta_a=z+\sum_{\substack{\text{$p$ path in $Q$}\\\tail{p}=\head{a}}}y_p\bigg(\dual{\delta_p}\delta_{\head{a}}-\sum_{\substack{b\in Q_1\\\head{b}=\head{a}}}\dual{\delta}_{pa}\delta_a\bigg),\]
for $z\in I$, and it follows by comparing coefficients that $y_p=0$ for all $p$, so $x=0$.
\end{proof}
Example~\ref{e:a3-eg} below shows that we may have $\frobcat_Q\subsetneq\Sub(P^+)$. The assumption on isolated vertices is necessary, since such a vertex $i$ of $Q$ results in a direct summand of $\frobcat_Q$ equivalent to $\fgmod{\Pi}$ for $\Pi$ the preprojective algebra of type $\type{A}_2$, with indecomposable objects $S_i^\pm$ and $P_i^\pm$, and $S_i^-,P_i^-\notin\Sub(P^+)=\Sub(P_i^+)$.

\section{g-vectors, indices and dimension vectors}
\label{s:index}

In this section we show how representation theoretic information about the category $\frobcat_Q$ from Theorem~\ref{t:main-cat-thm} can be used to recover information about the principal coefficient cluster algebra $\princlust{Q}$. These are not new results (unsurprisingly, since this cluster algebra is very well-studied), but the style of proofs is different and demonstrates how the categorification given here can be used. Our results are also not restricted to the acyclic case, but hold whenever an appropriate Frobenius categorification can be constructed, for example in the context of Corollary~\ref{c:frobcat-constr}.

Let $(Q,W)$ be a Jacobi-finite quiver with potential such that the frozen Jacobian algebra $A=A_{Q,W}$ is Noetherian; for example, one can take $Q$ to be an acyclic quiver. Then by Corollary~\ref{c:frobcat-constr}, the category $\frobcat=\GP(eAe)$ is a stably $2$-Calabi--Yau category with a cluster-tilting object $T=eA$, and there are compatible isomorphisms $\Endalg{\frobcat}{T}\iso A$ and $\stabEndalg{\frobcat}{T}\iso\jac{Q}{W}$. We keep the abbreviated notation $A$, $T$ and $\frobcat$ for the rest of the section.

The isomorphism $\Endalg{\frobcat}{T}\iso A$ allows us to write the indecomposable summands of $T$ as $T_i$, for $i\in Q_0$, where $T_i$ corresponds to $Ae_i$ under the isomorphism. We write $P_i^\pm$ instead of $T_i^\pm$, noting that these are the indecomposable projective-injective objects of $\frobcat$, and write $P^\pm=\bigdsum_{i\in Q_0}P_i^\pm$, which is compatible with the notation in Section~\ref{s:bdy-algs}. We abbreviate $F=\Hom_{\frobcat}(T,\blank)$ and $G=\Ext^1_{\frobcat}(T,\blank)$, these being functors $\frobcat\to\fgmod{A}$ via the isomorphism $\Endalg{\frobcat}{T}\iso A$. Finally, we write $[M:N]$ for the multiplicity of $N$ as an indecomposable summand of $M$ (for $M$ and $N$ objects in any category in which this makes sense).

\begin{lem}
\label{l:GX-proj-res}
Let $X\in\frobcat$. Then the minimal projective resolution
\begin{equation}
\label{eq:G-proj-res}
\begin{tikzcd}
0\arrow{r}&P^3\arrow{r}&P^2\arrow{r}&P^1\arrow{r}&P^0\arrow{r}&GX\arrow{r}&0
\end{tikzcd}
\end{equation}
of $GX$ has the property that, for each $j\in Q_0$, 
\begin{align*}
[P^k:Ae_j^+]&={\begin{cases}\dim_\KK(e_jGX),&k=1,\\0,&\text{otherwise},\end{cases}}\\
[P^k:Ae_j^-]&={\begin{cases}\dim_\KK(e_jGX)&k=2,\\0,&\text{otherwise}.\end{cases}}
\end{align*}
\end{lem}
\begin{proof}
Note that $eGX=\Ext^1_\frobcat(P^+\oplus P^-,X)=0$, so the composition series of $GX$ includes only the simple modules $S_i$ for $i\in Q_0$ a mutable vertex. In this case, the sequence \eqref{eq:proj-res-simp-i}, which is exact by Propositions~\ref{p:mut-exact-3} and \ref{p:mut-exact-2}, is a projective resolution of the simple $A$-module $\simp{i}$. We write this resolution as
\begin{equation}
\label{eq:simp-proj-res}
\begin{tikzcd}
0\arrow{r}&P_i^3\arrow{r}&P_i^2\arrow{r}&P_i^1\arrow{r}&P_i^0\arrow{r}&S_i\arrow{r}&0
\end{tikzcd}
\end{equation}
for projective $A$-modules $P_i^k$ which can be described explicitly using \eqref{eq:resA-isos}. In particular, one sees using this description that the only appearances of the projective $A$-modules $Ae_j^\pm$ in \eqref{eq:simp-proj-res} are one copy of $Ae_i^+$ in $P_i^1$, and one copy of $Ae_i^-$ in $P_i^2$.

By the horseshoe lemma, we can thus construct a (possibly non-minimal) projective resolution of $GX$ in which the degree $k$ term $\hat{P}^k$ is a direct sum of the $P_i^k$ over composition factors $S_i$ of $GX$. As a consequence, this resolution includes $\dim_\KK(e_jGX)$ copies of $Ae_j^+$, all summands of $\hat{P}^1$, and $\dim_\KK(e_jGX)$ copies of $Ae_j^-$, all summands of $\hat{P}^2$. Since each $Ae_j^\pm$ appears in only one homological degree in this resolution, it cannot be part of an acyclic summand, and so it appears with the same multiplicity and degree in the minimal projective resolution \eqref{eq:G-proj-res} of $GX$, as required.
\end{proof}

Recall that any $X\in\frobcat$ fits into an exact sequence
\begin{equation}
\label{eq:index}
\begin{tikzcd}
0\arrow{r}&K_X\arrow{r}&R_X\arrow{r}&X\arrow{r}&0
\end{tikzcd}
\end{equation}
with $K_X,R_X\in\add{T}$. Indeed, this holds for any Frobenius cluster category and cluster-tilting object, and the choice of sequence \eqref{eq:index} is equivalent to the choice of the map $R_X\to X$, subject to the condition that this map is a right $\add{T}$-approximation. Dually, $X$ fits into an exact sequence
\begin{equation}
\label{eq:coindex}
\begin{tikzcd}
0\arrow{r}&X\arrow{r}&L_X\arrow{r}&C_X\arrow{r}&0
\end{tikzcd}
\end{equation}
with $L_X,C_X\in\add{T}$, with the choice of sequence being equivalent to the choice of left $\add{T}$-approximation $X\to L_X$. We say the sequence \eqref{eq:index} is \emph{minimal} if the map $R_X\to X$ is a minimal right $\add{T}$-approximation of $X$, or equivalently if no non-zero component of the map $K_X\to R_X$ is an isomorphism, and define minimality for \eqref{eq:coindex} dually.

\begin{lem}
\label{l:index-sequence}
For any $X\in\frobcat$, choose minimal sequences of the form \eqref{eq:index} and \eqref{eq:coindex}, and let $i\in Q_0$. Then
\begin{align*}
[K_X:P_i^\pm]=[C_X:P_i^\pm]&=0,\\
[R_X:P_i^+]=[L_X:P_i^-]&=0,\\
[R_X:P_i^-]=[L_X:P_i^+]&=\dim_\KK(e_iGX).
\end{align*}
\end{lem}

\begin{proof}
The first line of equalities follows immediately from minimality, using that $P_i^\pm$ is projective-injective. For the remaining equalities, recall that $F$ restricts to an equivalence $\add{T}\isoto\projcat{A}$ by the Yoneda lemma. Applying $F$ to \eqref{eq:index} yields the exact sequence
\begin{equation}
\label{eq:F-proj-res}
\begin{tikzcd}
0\arrow{r}&FK_X\arrow{r}&FR_X\arrow{r}&FX\arrow{r}&0,
\end{tikzcd}
\end{equation}
which is a projective resolution of $FX$. Applying $F$ to \eqref{eq:index} yields
\[\begin{tikzcd}
0\arrow{r}&FX\arrow{r}&FL_X\arrow{r}&FC_X\arrow{r}&GX\arrow{r}&0,
\end{tikzcd}\]
and taking the cup product with \eqref{eq:F-proj-res} yields
\begin{equation}
\label{eq:G-proj-res2}
\begin{tikzcd}
0\arrow{r}&FK_X\arrow{r}&FR_X\arrow{r}&FL_X\arrow{r}&FC_X\arrow{r}&GX\arrow{r}&0,
\end{tikzcd}
\end{equation}
which is a projective resolution of $GX$. Furthermore, since the $\add{T}$-approximations $R_X\to X$ and $X\to L_X$ were chosen to be minimal, the projective resolutions \eqref{eq:F-proj-res} and \eqref{eq:G-proj-res2} are also minimal.  Noting that $[R_X:P_i^\pm]=[FR_X:FP_i^\pm]$ and $[L_X:P_i^\pm]=[FL_X:FP_i^\pm]$, and that $FP_i^\pm=Ae_i^\pm$ by Corollary~\ref{c:frobcat-constr}, the result follows from Lemma~\ref{l:GX-proj-res}.
\end{proof}


These observations will allow us to compute the g-vectors of any cluster algebra arising from a seed with principal part $Q$ in terms of in terms of the category $\frobcat=\frobcat_Q$. We define g-vectors via a grading on the principal coefficient cluster algebra $\princlust{Q}$, as in \cite[\S6]{FZ-CA4}. Write the initial cluster variables of this cluster algebra as $x_i$ and $x_i^+$ for $i\in Q_0$, where the $x_i^+$ are frozen. We then give these variables $\ZZ^{Q_0}$-degrees
\[\deg{x_i}=\varepsilon_i,\qquad \deg{x_i^+}=b_i,\]
where $\varepsilon_i$ is the $i$-th standard basis vector (named to avoid confusion with idempotents) and $b_i$ is the $i$-th row of the exchange matrix $b$ of $Q$. We extend this to the initial cluster variables of $\pprinclust{Q}$ by setting $\deg{x_i^-}=0$. This assignment puts a grading on the Laurent polynomial ring in variables $x_i$, $x_i^+$ and $x_i^-$, and thus on its subalgebra $\pprinclust{Q}$; note that the specialisation $x_i^-=1$, which restricts to a map $\pprinclust{Q}\to\princlust{Q}$, preserves degrees. We refer to the degree of any homogeneous element of the Laurent polynomial ring in $x_i$ and $x_i^+$ as the \emph{g-vector} of the element.

All cluster variables of $\pprinclust{Q}$ are homogeneous in the above grading; by work of Grabowski \cite[Prop.~3.2]{grabowskigraded}, this follows from the matrix identity
\begin{equation}
\label{eq:prin-grad}
\begin{pmatrix}-b&1_n&-1_n\end{pmatrix}\begin{pmatrix}1_n\\b\\0_n\end{pmatrix},
\end{equation}
where $1_n$ is the $n\times n$ identity matrix and $0_n$ is the $n\times n$ zero matrix, by observing that the first matrix in the product is the transposed exchange matrix of the initial seed of $\pprinclust{Q}$, and the second has rows given by the degrees of the cluster variables in this seed. Thus the cluster variables of $\princlust{Q}$ are also homogeneous and hence have well-defined g-vectors.

Given any seed $s$ with principal part $Q$, the mutable cluster variables of $\princlust{Q}$ are in natural bijection with those of the cluster algebra $\clustalg{s}$ with initial seed $s$, and so we define the g-vector of a cluster variable in $\clustalg{s}$ to be the g-vector of the corresponding cluster variable in $\princlust{Q}$. This applies in particular to the coefficient-free cluster algebra $\clustalg{Q}$.

The main results of this section involve the Fu--Keller cluster character $\FKcc{T}$ as in \cite{FuKel} and Section~\ref{s:mutation}. While we are only assuming that $A=\Endalg{\frobcat}{T}$ is Noetherian, and not necessarily finite-dimensional, this is sufficient for the cluster character to be well-defined (cf.~\cite[\S3]{grabowskigradedfrobenius} and \cite[\S6]{Pressland-Post}).

\begin{thm}
\label{t:g-vector-index}
Assume that $A=A_{Q,W}$ is Noetherian and consider the corresponding Frobenius category $\frobcat=\frobcat_{Q,W}$ and cluster-tilting object $T=eA$. Let $X\in\frobcat$, and choose a minimal short exact sequence of the form \eqref{eq:index}. Write
\[
R_X=P\dsum\bigdsum_{i\in Q_0}T_i^{r_i},\qquad
K_X=\bigdsum_{i\in Q_0}T_i^{k_i}.
\]
where $P\in\frobcat$ is projective and $T_i=eAe_i$. Then $\FKcc{T}(X)$ is homogeneous, and its g-vector is $\sum_{i\in Q_0}(r_i-k_i)\varepsilon_i$.
\end{thm}
\begin{proof}
We first observe that $K_X$ has no projective summands, since this would violate minimality, and so $R_X$ and $K_X$ have expressions of the required form. The element
\[G=\sum_{i\in Q_0}[S_i]\otimes \varepsilon_i+\sum_{i\in Q_0}[S_i^+]\otimes b_i\in\Kgp_0(\fd{A})\otimes\ZZ^{Q_0}\]
where $S_j$ denotes the simple $A$-module at vertex $j\in\lift{Q}_0$, is a grading of $\frobcat$ in the sense of \cite[Def.~3.7]{grabowskigradedfrobenius}; this follows again from the matrix identity \eqref{eq:prin-grad}, as in \cite[Rem.~3.9(ii)]{grabowskigradedfrobenius}. We write this grading of $\frobcat$ as $\deg_G$. It then follows from \cite[Prop.~3.11(i)]{grabowskigradedfrobenius} that $\FKcc{T}(X)$ is homogeneous and its g-vector is equal to $\deg_G(X)$.

Now by \cite[Prop.~3.11(ii)]{grabowskigradedfrobenius}, we have
\[\deg_G(X)=\deg_G(R_X)-\deg_G(K_X).\]
By construction, we have
\begin{align*}
\deg_G(R_X)&=\deg_G(P)+\sum_{i\in Q_0}r_i\deg_G(T_i)=\deg_G(P)+\sum_{i\in Q_0}r_i\varepsilon_i,\\
\deg_G(K_X)&=\sum_{i\in Q_0}k_i\deg_G(T_i)=\sum_{i\in Q_0}k_i\varepsilon_i,
\end{align*}
so that
\[\deg_G(X)=\deg_G(P)+\sum_{i\in Q_0}(r_i-k_i)\varepsilon_i.\]
But by Lemma~\ref{l:index-sequence}, we have $P\in\add{P^-}$, so $\deg_G(P)=0$ and the result follows.
\end{proof}
\begin{rem}
\label{r:index-gvec}
The quantity $\sum_{i\in Q_0}(r_i-k_i)\varepsilon_i$ is by definition (cf.~\cite[\S2.1]{palucluster}) the index of $X$ with respect to $T$. Thus Theorem~\ref{t:g-vector-index} recovers the well-known result \cite[Prop.~4.3]{FuKel} that the g-vector is categorified by the index, at least for those cluster algebras admitting appropriate additive categorifications (e.g.\ those defined by acyclic quivers).
\end{rem}

We may use similar methods to show how the cluster character $\FKcc{T}$ on $\frobcat$ may be computed solely in terms of the representation theory of the stable category $\stab{\frobcat}$, and of the finite-dimensional algebra $\jac{Q}{W}$. This is the first point at which we use an explicit formula for $\FKcc{T}$, and so we recall this from \cite{FuKel}.

Recall that we are assuming $\Endalg{\frobcat}{T}\iso A=A_{Q,W}$ is Noetherian. A consequence of Theorem~\ref{t:bi3cy} is that $\gldim{A}\leq 3$, and so the Euler form
\[\langle M,N\rangle=\sum_{i=0}^3(-1)^i\dim\Ext^i_A(M,N)\]
is well-defined when $M\in\fgmod{A}$ is finitely generated and $N\in\fgmod{A}$ is finite-dimensional. Since $A$ has finite global dimension, the value of $\langle M,N\rangle$ depends only on the dimension vector of the finite-dimensional module $N$ by the horseshoe lemma, and so we may write $\langle M,d\rangle:=\langle M,N\rangle$ for an arbitrary $A$-module $N$ with dimension vector $d\in\ZZ^{\lift{Q}_0}$. Theorem~\ref{t:bi3cy} also implies that $\langle M,N\rangle=-\langle N,M\rangle$ if both modules are finite-dimensional and $eN=0$, so we may analogously define $\langle d,M\rangle$ for $d\in\ZZ^{\lift{Q}_0}$ supported only on the set $Q_0$ of mutable vertices and $M$ a finite-dimensional $A$-module.

Recall further that we write $F=\Hom(T,\blank)$ and $G=\Ext^1(T,\blank)$. Now Fu--Keller's cluster character for $X\in\frobcat$ can be written
\begin{equation}
\label{eq:FK-cc}
\FKcc{T}(X)=x^{\hat{g}_X}\sum_{d\in\ZZ^{Q_0}}\lambda_dx^{\hat{v}_d},
\end{equation}
with notation as follows. The vector $\hat{g}_X\in\ZZ^{\lift{Q}_0}$ has $i$-th coordinate $\langle FX,S_i\rangle$, where $FX=\Hom_{\frobcat}(T,X)$ as above and $S_i$ denotes the simple $A$-module at vertex $i\in\lift{Q}_0$. Similarly, the vector $\hat{v}_d\in\ZZ^{\lift{Q}_0}$ has $i$-th coordinate $\langle d,S_i\rangle=-\langle S_i,d\rangle$. We use the standard monomial notation $x^u:=\prod_{i\in\lift{Q}_0}x_i^{u_i}$ for $u\in\ZZ^{\lift{Q}_0}$. Finally, $\lambda_d\in\ZZ$ is the Euler characteristic of the quiver Grassmannian of $d$-dimensional submodules of the $A$-module $GX=\Ext^1_\frobcat(T,X)$. Note that $GX$ is a finite-dimensional $\stabEndalg{\frobcat}{T}\iso\jac{Q}{W}$-module, hence the restriction to $d\in\ZZ^{Q_0}$ in the summation. Thus $\lambda_d$ can be non-zero only for the finite set of $d\in\ZZ^{Q_0}$ satisfying $0\leq d\leq\dimvec{GX}$ coordinate-wise.

For each $X\in\frobcat$ and $d\in\ZZ^{\lift{Q}_0}$, let $g_X,v_d\in\ZZ^{Q_0}$ be the restrictions of $\hat{g}_X$ and $\hat{v}_d$ respectively to the mutable vertices $Q_0$. Further, write
\begin{equation}
\label{eq:stab-cc}
\Palcc{T}(X)=\FKcc{T}(X)|_{x_i^\pm=1}.
\end{equation}
By \cite[Thm.~3.3(b)]{FuKel}, this function coincides with Palu's cluster character \cite{palucluster} for the stable category $\stab{\frobcat}$, evaluated on the shifted object $\Sigma X$. Consequently, the data of $g_X$, $v_d$ and $\lambda_d$ may be computed entirely from $\stab{\frobcat}$ and the algebra $\jac{Q}{W}$.

The next result shows that this is in fact also true of the extended vectors $\hat{g}_X$ and $\hat{v}_d$, and hence of the cluster character $\FKcc{T}(X)$. To state it, we extend our monomial notation to write $(x^+)^u=\prod_{i\in Q_0}(x_i^+)^{u_i}$ when $u\in\ZZ^{Q_0}$, and define $(x^-)^u$ similarly. We treat $\ZZ^{Q_0}$ as a subset of $\ZZ^{\lift{Q}_0}$ in the natural way, so that a monomial $x^u$ for $u\in\ZZ^{Q_0}$ lies in the same ring as a monomial $x^v$ for $v\in\ZZ^{\lift{Q}_0}$ (cf.~\eqref{eq:g-hat} below, for example).

\begin{thm}
\label{t:FK-cc}
Assume $A_{Q,W}$ is Noetherian, and let $\frobcat=\frobcat_{Q,W}$ with cluster-tilting object $T=eA_{Q,W}$. Then for any $X\in\frobcat$, the cluster character of $X$ with respect to $T$ is
\begin{equation}
\label{eq:FK-cc-pprin}
\FKcc{T}(X)=x^{g_X}\sum_{d\in\ZZ^{Q_0}}\lambda_dx^{v_d}(x^+)^{d}(x^-)^{\dimvec(GX)-d},
\end{equation}
for $g_X$, $v_d$ and $\lambda_d$ as above.
\end{thm}
\begin{proof}
Bearing in mind the identity \eqref{eq:stab-cc}, cf.\ \cite[Thm.~3.3(b)]{FuKel}, it is sufficient to compute the power of $x_i^\pm$ appearing in each term of the formula \eqref{eq:FK-cc} for $\FKcc{T}(X)$.

First we deal with the leading term $x^{\hat{g}_X}$. For $i\in Q_0$, the $i^\pm$-component of $\hat{g}_X$ is by definition $\langle FX,S_i^\pm\rangle$. This can be computed using the projective resolution \eqref{eq:F-proj-res} of $FX$, which shows that
\begin{align*}
\langle FX,S_i^\pm\rangle&=\dim\Hom_A(FR_X,S_i^\pm)-\dim\Hom_A(FK_X,S_i^\pm)\\
&=[R_X:P_i^\pm]-[K_X:P_i^\pm]
\end{align*}
for $R_X,K_X\in\add{T}$ as in \eqref{eq:index}; the second equality uses that $\dim\Hom_A(FR_X,S_i^\pm)$ counts the multiplicity of the projective $FP_i^\pm$ as a summand of the projective $FR_X$ (and similarly for $K_X$), and also the Yoneda lemma as in the proof of Lemma~\ref{l:index-sequence}. We may then use this lemma to see that
\begin{align*}
\langle FX,S_i^+\rangle&=0,\\
\langle FX,S_i^-\rangle&=\dim_\KK(e_iGX),
\end{align*}
so that
\begin{equation}
\label{eq:g-hat}
x^{\hat{g}_X}=x^{g_X}(x^-)^{\dim_\KK(GX)}.
\end{equation}

We calculate $x^{\hat{v}_d}$, for $d\in\ZZ^{\lift{Q}_0}$, similarly. The $i^\pm$-component is
\[\langle d,S_i^\pm\rangle=\sum_{j\in Q_0}d_j\langle S_j,S_i^\pm\rangle.\]
Now, arguing as for $\hat{g}_X$, we may use \eqref{eq:simp-proj-res} to compute that
\begin{align*}
\langle S_j,S_i^+\rangle&=\begin{cases}1,&i=j,\\0,&\text{otherwise},\end{cases}\\
\langle S_j,S_i^-\rangle&=\begin{cases}-1,&i=j,\\0,&\text{otherwise}.\end{cases}
\end{align*}
It then follows that
\begin{equation}
\label{eq:v-hat}
x^{\hat{v}_d}=x^{v_d}(x^+)^d(x^-)^{-d},
\end{equation}
and we obtain the desired result by substituting \eqref{eq:g-hat} and \eqref{eq:v-hat} back into \eqref{eq:FK-cc}.
\end{proof}

\begin{rem}
Using the interpretation of $\lambda_d$ as the Euler characteristic of a quiver Grassmannian, the non-zero terms of the sum in \eqref{eq:FK-cc-pprin} are indexed by (dimension vectors of) submodules of the $\jac{Q}{W}$-module $GX$. In the term indexed by such a submodule $N$, the exponent of $x^+$ is $\dimvec(N)$, and that of $x^-$ is $\dimvec(GX/N)$.
\end{rem}

\begin{cor}
The bijection between rigid indecomposable objects of $\frobcat_Q^+$ and cluster variables of the principal coefficient cluster algebra $\princlust{Q}$, as in Corollary~\ref{c:extriangulated}\ref{c:extriangulated-clustvars}, is given by $X\mapsto\FKcc{T}_+(X)$ where
\[\FKcc{T}_+(X)=x^{g_X}\sum_{d\in\ZZ^{Q_0}}\lambda_dx^{v_d}(x^+)^{d}.\]
\end{cor}
\begin{proof}
As in the proof of Corollary~\ref{c:extriangulated}, the bijection is given by specialising Fu--Keller's cluster character on $\frobcat_Q$ (where $X$ is also an object) at $x_i^-=1$ for all $i\in Q_0$. The result is then immediate from Theorem~\ref{t:FK-cc}.
\end{proof}

\begin{rem}
\label{r:borges-pierin}
The right-hand side of \eqref{eq:FK-cc-pprin} is precisely the `cluster character with coefficients' considered by Borges and Pierin \cite[Defn.~3.1]{borgescluster} for the cluster category of a Dynkin quiver. As a consequence, their expression satisfies the cluster character multiplication formula (see e.g.\ \cite[Thm.~2]{calderotriangulated2}), even after extending the setting to cluster categories of arbitrary acyclic quivers.
\end{rem}

\section{Examples}
\label{s:egs}

\begin{eg}
\label{e:a2eg}
Let $Q$ be an $\type{A}_2$ quiver, so, as computed in Example~\ref{e:a2},
\[(\lift{Q},\lift{F})=\mathord{\begin{tikzpicture}[baseline={(current bounding box.center)},yscale=-1]
\node at (-1,0) (1) {$1$};
\node at (1,0) (2) {$2$};
\node at (-2,1) (1+) {$\boxed{1^+}$};
\node at (-2,-1) (1-) {$\boxed{1^-}$};
\node at (2,1) (2-) {$\boxed{2^-}$};
\node at (2,-1) (2+) {$\boxed{2^+}$};
\path[-angle 90,font=\scriptsize]
	(1) edge node[below] {$a$} (2)
	(1) edge node[below right] {$\alpha_1$} (1+)
	(1-) edge node[above right] {$\beta_1$} (1)
	(2) edge node[above left] {$\alpha_2$} (2+)
	(2-) edge node[below left] {$\beta_2$} (2);
\path[\frozen,-angle 90,font=\scriptsize]
	(1+) edge[bend right] node[left] {$\delta_1$} (1-)
	(2+) edge[bend right] node[right] {$\delta_2$} (2-)
	(2+) edge[bend left] node[above] {$\delta_a$} (1-);
\end{tikzpicture}}\]
and $\lift{W}=\beta_1\delta_1\alpha_1+\beta_2\delta_2\alpha_2-a\beta_1\delta_a\alpha_2$. Then $A_Q=\frjac{\lift{Q}}{\lift{F}}{\lift{W}}$, and its boundary algebra is $B_Q\iso\cpa{\KK}{\Lambda_Q}/\close{I}$ for
\[\Lambda_Q=\mathord{\begin{tikzpicture}[baseline={([yshift=-0.5ex]current bounding box.center)},scale=1.5]
\node at (0,0) (1) {$1$};
\node at (1,0) (2) {$2$};
\node at (2,0) (3) {$3$};
\node at (3,0) (4) {$4$};
\path[-angle 90,font=\scriptsize]
	(1) edge[bend left] node[above] {$\alpha$} (2)
	(2) edge[bend left] node[below] {$\dual{\alpha}$} (1)
	(2) edge[bend left] node[above] {$\beta$} (3)
	(3) edge[bend left] node[below] {$\dual{\beta}$} (2)
	(3) edge[bend left] node[above] {$\gamma$} (4)
	(4) edge[bend left] node[below] {$\dual{\gamma}$} (3);
\end{tikzpicture}}\]
and $I=\Span{\dual{\alpha}\alpha,\ \alpha\dual{\alpha}-\dual{\beta}\beta,\ \beta\dual{\beta}-\dual{\gamma}\gamma,\ \gamma\dual{\gamma},\ \beta\alpha,\ \gamma\beta,\ \dual{\alpha}\dual{\beta}\dual{\gamma}}$. Here we have relabelled the vertices by identifying the ordered sets $(1^+,1^-,2^+,2^-)$ and $(1,2,3,4)$. The Auslander--Reiten quiver of $\GP(B)$ is shown in Figure~\ref{fig:a2-AR}, where we identify the left and right sides of the picture so that the quiver is drawn on a Möbius band.
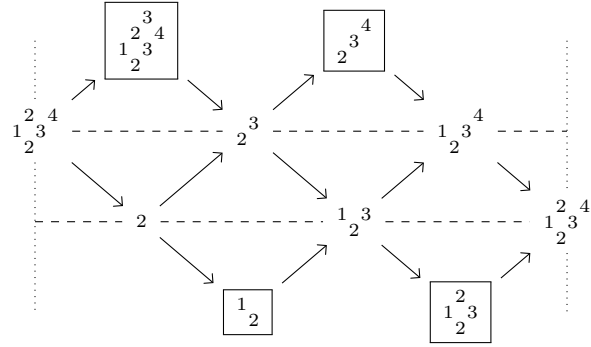
\begin{figure}[h]
\begin{tikzpicture}[yscale=1.2,xscale=1.4]
\node at (0,0) (radP3) {$\begin{radfilt}&2&&4\\1&&3\\&2\end{radfilt}$};
\node at (1,1) (P3) {$\boxed{\begin{radfilt}&&3\\&2&&4\\1&&3\\&2\end{radfilt}}$};
\node at (1,-1) (S2) {$\begin{radfilt}2\end{radfilt}$};
\node at (2,0) (radP4) {$\begin{radfilt}&3\\2\end{radfilt}$};
\node at (2,-2) (P1) {$\boxed{\begin{radfilt}1\\&2\end{radfilt}}$};
\node at (3,1) (P4) {$\boxed{\begin{radfilt}&&4\\&3\\2\end{radfilt}}$};
\node at (3,-1) (radP2) {$\begin{radfilt}1&&3\\&2\end{radfilt}$};
\node at (4,0) (M) {$\begin{radfilt}&&&4\\1&&3\\&2\end{radfilt}$};
\node at (4,-2) (P2) {$\boxed{\begin{radfilt}&2\\1&&3\\&2\end{radfilt}}$};
\node at (5,-1) (radP3') {$\begin{radfilt}&2&&4\\1&&3\\&2\end{radfilt}$};
\path[-angle 90]
	(radP3) edge (P3)
	(radP3) edge (S2)
	(P3) edge (radP4)
	(S2) edge (radP4)
	(S2) edge (P1)
	(radP4) edge (P4)
	(radP4) edge (radP2)
	(P1) edge (radP2)
	(P4) edge (M)
	(radP2) edge (M)
	(radP2) edge (P2)
	(M) edge (radP3')
	(P2) edge (radP3');
\path[dashed]
	(0,-1) edge (S2)
	(radP3) edge (radP4)
	(S2) edge (radP2)
	(radP4) edge (M)
	(radP2) edge (radP3')
	(M) edge (5,0);
\path[dotted]
	(0,1) edge (radP3)
	(radP3) edge (0,-2)
	(5,1) edge (radP3')
	(radP3') edge (5,-2);
\end{tikzpicture}
\caption{\label{fig:a2-AR}The Auslander--Reiten quiver of $\GP(B_Q)$ for $Q$ of type $\type{A}_2$.}
\end{figure}
To calculate the objects of $\GP(B_Q)$ it is useful to observe that, in this example, $B_Q$ is $1$-Iwanaga--Gorenstein, and so $\GP(B_Q)=\Sub(B_Q)$. The stable category $\stabGP(B_Q)$ is the cluster category of type $\type{A}_2$, as expected. The cluster tilting object
\[T=\begin{radfilt}&3\\2\end{radfilt}\oplus\begin{radfilt}1&&3\\&2\end{radfilt}\oplus B_Q\]
of $\GP(B_Q)$ has endomorphism algebra $A_Q$, and corresponds to the initial seed of the cluster algebra with polarised principal coefficients associated to our initial $\type{A}_2$ quiver $Q$.
\end{eg}

\begin{eg}
\label{e:a3-eg}
Let $Q$ be a linearly oriented quiver of type $\mathsf{A}_3$. We may then compute
\[(\lift{Q},\lift{F})=\mathord{\begin{tikzpicture}[baseline={(current bounding box.center)},xscale=1.2,yscale=-1.1]
\node at (-1,0) (1) {$1$};
\node at (1,0) (2) {$2$};
\node at (3,0) (3) {$3$};
\node at (-2,1) (1+) {$\boxed{1^+}$};
\node at (-2,-1) (1-) {$\boxed{1^-}$};
\node at (0,-1) (2+) {$\boxed{2^+}$};
\node at (2,-1) (2-) {$\boxed{2^-}$};
\node at (4,1) (3-) {$\boxed{3^-}$};
\node at (4,-1) (3+) {$\boxed{3^+}$};
\path[-angle 90,font=\scriptsize]
	(1) edge node[above] {$a$} (2)
	(2) edge node[above] {$b$} (3)
	(1) edge node[below right] {$\alpha_1$} (1+)
	(1-) edge node[above right] {$\beta_1$} (1)
	(2) edge node[left] {$\alpha_2$} (2+)
	(2-) edge node[right] {$\beta_2$} (2)
	(3) edge node[above left] {$\alpha_3$} (3+)
	(3-) edge node[below left] {$\beta_3$} (3);
\path[\frozen,-angle 90,font=\scriptsize]
	(1+) edge[bend right] node[left] {$\delta_1$} (1-)
	(2+) edge[bend right] node[above] {$\delta_2$} (2-)
	(3+) edge[bend right] node[right] {$\delta_3$} (3-)
	(2+) edge[bend left] node[above] {$\delta_a$} (1-)
	(3+) edge[bend left] node[above] {$\delta_b$} (2-);
\end{tikzpicture}}\]
Relabelling vertices similarly to Example~\ref{e:a2eg}, the boundary algebra $B_Q$ has quiver
\[\Lambda_Q=\mathord{\begin{tikzpicture}[baseline={([yshift=3.6ex]current bounding box.center)},xscale=1.5,yscale=0.8]
\node at (0,0) (1) {$1$};
\node at (1,0) (2) {$2$};
\node at (2,0) (3) {$3$};
\node at (3,0) (4) {$4$};
\node at (4,0) (5) {$5$};
\node at (5,0) (6) {$6$};
\path[-angle 90,font=\scriptsize]
	(1) edge[bend left] (2)
	(2) edge[bend left] (1)
	(2) edge[bend right] (3)
	(3) edge[bend right] (2)
	(3) edge[bend left] (4)
	(4) edge[bend left] (3)
	(4) edge[bend right] (5)
	(5) edge[bend right] (4)
	(5) edge[bend left] (6)
	(6) edge[bend left] (5)
	(2) edge[bend right=60] (5);
\end{tikzpicture}}\]
as computed before in Example~\ref{e:a3-bdy-eg}. Explicit relations can be written down as in Section~\ref{s:bdy-algs}, but here we will simply give radical filtrations for the projective modules. Note that despite the `geographical' separation of $2$ and $5$ in these filtrations, the arrow $2\to 5$ always acts as a vector space isomorphism from the $1$-dimensional subspace of $e_2B_Q$ indicated by a $2$ in the filtration to the $1$-dimensional subspace of $e_5B_Q$ indicated by a $5$ in the row below, when this configuration occurs.
\begin{align*}
P_1&=\begin{radfilt}1\\&2\end{radfilt}&P_2&=\begin{radfilt}&2\\1&&3&&5\\&2&&4\end{radfilt}&P_3&=\begin{radfilt}&&3\\&2&&4\\1&&3&&5\\&2&&4\end{radfilt}\\[1ex]
P_4&=\begin{radfilt}&&4\\&3&&5\\2&&4\end{radfilt}&P_5&=\begin{radfilt}&&&5\\&&4&&6\\&3&&5\\2&&4\end{radfilt}&P_6&=\begin{radfilt}&&6\\&5\\4\end{radfilt}
\end{align*}

In this case the Gorenstein dimension of $B_Q$ is $2$; the indecomposable projective $P_2$ has injective dimension $2$, while all others have injective dimension $1$. The Auslander--Reiten quiver of $\GP(B_Q)$ is shown, again on a Möbius band, in Figure~\ref{fig:a3-AR}.
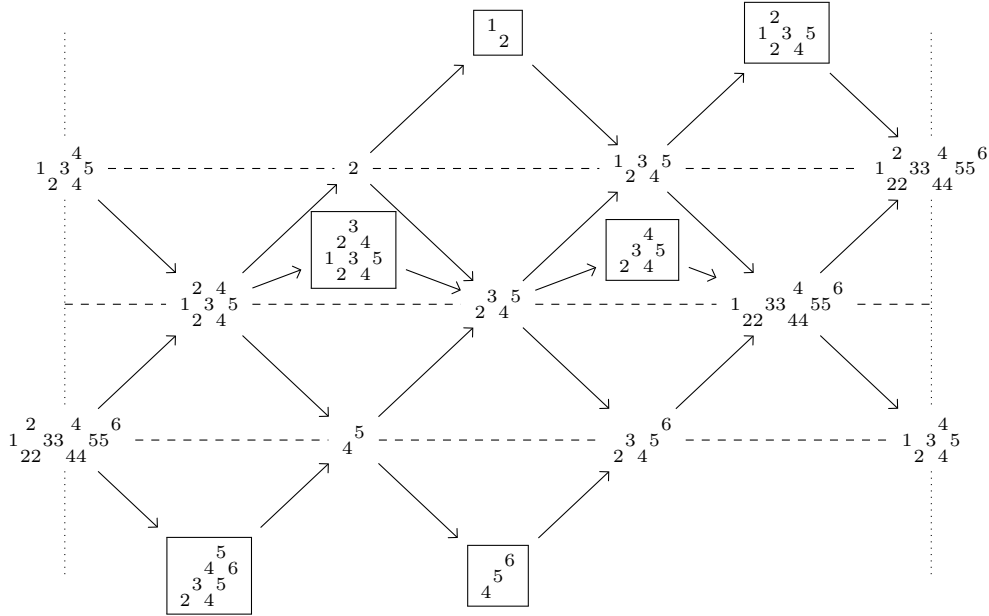
\begin{figure}[t]
\begin{tikzpicture}[yscale=1.8,xscale=1.9]
\node at (0,0) (M4) {$\begin{radfilt}&&&4\\1&&3&&5\\&2&&4\end{radfilt}$};
\node at (0,-2) (M9) {$\begin{radfilt}&2&&4&&6\\1&&33&&55\\&22&&44\end{radfilt}$};
\node at (1,-1) (M8) {$\begin{radfilt}&2&&4\\1&&3&&5\\&2&&4\end{radfilt}$};
\node at (1,-3) (P5) {$\boxed{\begin{radfilt}&&&5\\&&4&&6\\&3&&5\\2&&4\end{radfilt}}$}; 
\node at (2,0) (M7) {$\begin{radfilt}2\end{radfilt}$};
\node at (2,-0.6) (P3) {$\boxed{\begin{radfilt}&&3\\&2&&4\\1&&3&&5\\&2&&4\end{radfilt}}$}; 
\node at (2,-2) (M3) {$\begin{radfilt}&5\\4\end{radfilt}$};
\node at (3,1) (P1) {$\boxed{\begin{radfilt}1\\&2\end{radfilt}}$}; 
\node at (3,-1) (M2) {$\begin{radfilt}&3&&5\\2&&4\end{radfilt}$};
\node at (3,-3) (P6) {$\boxed{\begin{radfilt}&&6\\&5\\4\end{radfilt}}$}; 
\node at (4,0) (M1) {$\begin{radfilt}1&&3&&5\\&2&&4\end{radfilt}$};
\node at (4,-0.6) (P4) {$\boxed{\begin{radfilt}&&4\\&3&&5\\2&&4\end{radfilt}}$}; 
\node at (4,-2) (M6) {$\begin{radfilt}&&&&6\\&3&&5\\2&&4\end{radfilt}$};
\node at (5,1) (P2) {$\boxed{\begin{radfilt}&2\\1&&3&&5\\&2&&4\end{radfilt}}$}; 
\node at (5,-1) (M5) {$\begin{radfilt}&&&4&&6\\1&&33&&55\\&22&&44\end{radfilt}$};
\node at (6,0) (M9') {$\begin{radfilt}&2&&4&&6\\1&&33&&55\\&22&&44\end{radfilt}$};
\node at (6,-2) (M4') {$\begin{radfilt}&&&4\\1&&3&&5\\&2&&4\end{radfilt}$};

\path[-angle 90]
	(M4) edge (M8)
	(M9) edge (M8)
	(M9) edge (P5)
	(M8) edge (M7)
	(M8) edge (P3)
	(M8) edge (M3)
	(P5) edge (M3)
	(M7) edge (P1)
	(M7) edge (M2)
	(P3) edge (M2)
	(M3) edge (M2)
	(M3) edge (P6)
	(P1) edge (M1)
	(M2) edge (M1)
	(M2) edge (P4)
	(M2) edge (M6)
	(P6) edge (M6)
	(M1) edge (P2)
	(M1) edge (M5)
	(P4) edge (M5)
	(M6) edge (M5)
	(P2) edge (M9')
	(M5) edge (M9')
	(M5) edge (M4');
\path[dashed]
	(0,-1) edge (M8)
	(M4) edge (M7)
	(M9) edge (M3)
	(M8) edge (M2)
	(M7) edge (M1)
	(M3) edge (M6)
	(M2) edge (M5)
	(M1) edge (M9')
	(M6) edge (M4')
	(M5) edge (6,-1);
\path[dotted]
	(0,1) edge (M4)
	(M4) edge (M9)
	(M9) edge (0,-3)
	(6,1) edge (M9')
	(M9') edge (M4')
	(M4') edge (6,-3);
\end{tikzpicture}
\caption{\label{fig:a3-AR}The Auslander--Reiten quiver of $\GP(B_Q)$ for $Q$ linearly oriented of type $\type{A}_3$. In addition to the the usual mesh relations coming from Auslander--Reiten sequences, the length two path from $P_2$ to $P_5$ represents the zero map.}
\end{figure}
The initial cluster tilting object from Corollary~\ref{c:frobcat-constr} is
\[T=\begin{radfilt}1&&3&&5\\&2&&4\end{radfilt}\oplus\begin{radfilt}&3&&5\\2&&4\end{radfilt}\oplus\begin{radfilt}&5\\4\end{radfilt}\oplus B_Q.\]
\end{eg}

\begin{eg}
\label{e:3cycle-eg}
Applying our construction to the quiver with potential $(Q,W)$ from Example~\ref{e:3-cycle} in which $Q$ is a $3$-cycle (which we may do, since while $A$ is not finite-dimensional in this case, it is still Noetherian) yields, as observed in Example~\ref{e:3-cycle}, the Grassmannian cluster category for the Grassmannian $\mathrm{Gr}_{2}^{6}$ \cite{JKS}. This is a Hom-infinite category, and the Gorenstein projective $B$-modules are all infinite-dimensional. Representing these modules by Plücker labels as in \cite{JKS}, the Auslander--Reiten quiver of $\GP(B_{Q,W})$ is shown, on the now familiar Möbius band, in Figure~\ref{fig:3cycle-AR}.
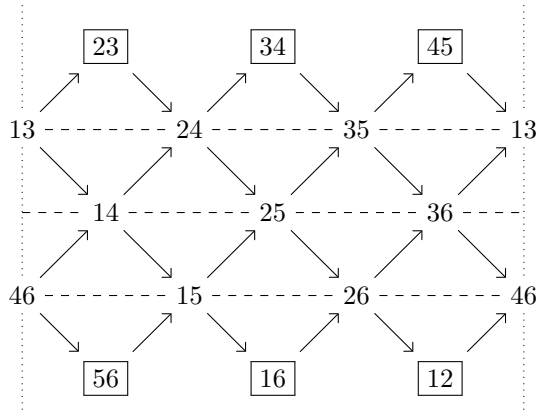
\begin{figure}[t]
\begin{tikzpicture}[scale=1.1]
\node at (0,1) (13) {$13$};
\node at (0,-1) (46) {$46$};
\node at (1,2) (23) {$\boxed{23}$};
\node at (1,0) (14) {$14$};
\node at (1,-2) (56) {$\boxed{56}$};
\node at (2,1) (24) {$24$};
\node at (2,-1) (15) {$15$};
\node at (3,2) (34) {$\boxed{34}$};
\node at (3,0) (25) {$25$};
\node at (3,-2) (16) {$\boxed{16}$};
\node at (4,1) (35) {$35$};
\node at (4,-1) (26) {$26$};
\node at (5,2) (45) {$\boxed{45}$};
\node at (5,0) (36) {$36$};
\node at (5,-2) (12) {$\boxed{12}$};
\node at (6,1) (13') {$13$};
\node at (6,-1) (46') {$46$};

\path[-angle 90]
	(13) edge (23)
	(13) edge (14)
	(46) edge (14)
	(46) edge (56)
	(23) edge (24)
	(14) edge (24)
	(14) edge (15)
	(56) edge (15)
	(24) edge (34)
	(24) edge (25)
	(15) edge (25)
	(15) edge (16)
	(34) edge (35)
	(25) edge (35)
	(25) edge (26)
	(16) edge (26)
	(35) edge (45)
	(35) edge (36)
	(26) edge (36)
	(26) edge (12)
	(45) edge (13')
	(36) edge (13')
	(36) edge (46')
	(12) edge (46');
\path[dashed]
	(0,0) edge (14)
	(13) edge (24)
	(46) edge (15)
	(14) edge (25)
	(24) edge (35)
	(15) edge (26)
	(25) edge (36)
	(35) edge (13')
	(26) edge (46')
	(36) edge (6,0);
\path[dotted]
	(0,2.5) edge (13)
	(13) edge (46)
	(46) edge (0,-2.5)
	(6,2.5) edge (13')
	(13') edge (46')
	(46') edge (6,-2.5);
\end{tikzpicture}
\caption{\label{fig:3cycle-AR}The Auslander--Reiten quiver of $\GP(B_{Q,W})$, where $(Q,W)$ is a $3$-cycle and its usual potential, shown as the Grassmannian cluster category $\CM(B_{2,6})$.}
\end{figure}
In this case, the quiver of the endomorphism algebra of the object
\[13\dsum 15\dsum 35\dsum B_{Q,W}\]
is $\lift{Q}$, as is the quiver of the (isomorphic) endomorphism algebra of $24\dsum 26\dsum 46\dsum B_{Q,W}$. We note that the stable category $\stabGP(B_{Q,W})$ is equivalent to the cluster category $\clustcat{Q,W}\simeq\clustcat{Q'}$ where $Q'$ is any orientation of the Dynkin diagram $\type{A}_3$. Thus all of the conclusions of Theorem~\ref{t:main-cat-thm} (replacing $\frobcat_Q$ by $\frobcat_{Q,W}$ and $\clustcat{Q}$ by $\clustcat{Q,W}$) still hold in this case, despite the failure of acyclicity.
\end{eg}

\section*{Acknowledgements}
We thank Mikhail Gorsky, Alfredo Nájera Chávez, Hiroyuki Nakaoka, Yann Palu and Salvatore Stella for useful conversations and for providing references. We are particularly grateful to Bernhard Keller for pointing out the role of pseudocompactness in the results of Section~\ref{s:frjacCY}, and also thank the anonymous referee for suggesting various improvements.

Financial support during the initial writing of the paper was provided by the Max-Planck-Gesellschaft. The paper was revised during a stay at the Isaac Newton Institute for Mathematical Sciences, Cambridge, supported by EPSRC grant EP/R014604/1, during the programme \emph{Representation theory and cluster algebras}, and we thank the institute for their support and hospitality. The author was further supported by the EPSRC postdoctoral fellowship grant EP/T001771/1 during this period.

\defbibheading{bibliography}[\refname]{\section*{#1}}
\printbibliography
\end{document}